\def\bb{\mathbb}
\def\bbN{\bb{N}}
\def\bbZ{\bb{Z}}
\DeclareMathSymbol{<}{\mathrel}{letters}{'074}
\DeclareMathSymbol{>}{\mathrel}{letters}{'076}
\DeclareMathOperator{\Ann}{Ann}
\DeclareMathOperator{\Soc}{Soc}
\DeclareMathOperator{\row}{Row}
\NewDocumentCommand{\Mod}{O{}O{}}{\prescript{}{#1}{\mathbf{Mod}}_{#2}}
\newtheorem{thm}{Theorem}[section]
\newtheorem{cor}[thm]{Corollary}
\newtheorem{lem}[thm]{Lemma}
\newtheorem{prop}[thm]{Proposition}
\newtheorem{defn}[thm]{Definition}
\newtheorem{ques}[thm]{Question}
\theoremstyle{remark}
\newtheorem{rem}[thm]{Remark}
\newtheorem{exa}[thm]{Example}
\setlist{noitemsep}
\definecolor{myred}{HTML}{E74C3C}
\tikzset{caption/.style={execute at end picture={\path
let \p1=($(current bounding box.east)-(current bounding box.west)$) in
(current bounding box.south) node[below,text width=\x1-4pt,align=center] 
{\vphantom{$|$}#1};}}}
\newcounter{x}
\newcounter{y}
\newcounter{z}
\newcommand\xaxis{210}
\newcommand\yaxis{-30}
\newcommand\zaxis{90}
\newcommand\topside[4]{
  \fill[fill=#4,fill opacity=1, draw=black,shift={(\xaxis:#1/2)},shift={(\yaxis:#2/2)},
  shift={(\zaxis:#3/2)}] (0,0) -- (30:1/2) -- (0,1/2) --(150:1/2)--(0,0);
}
\newcommand\leftside[4]{
  \fill[fill=#4!90!black,fill opacity=1, draw=black,shift={(\xaxis:#1/2)},shift={(\yaxis:#2/2)},
  shift={(\zaxis:#3/2)}] (0,0) -- (0,-1/2) -- (210:1/2) --(150:1/2)--(0,0);
}
\newcommand\rightside[4]{
  \fill[fill=#4!80!black,fill opacity=1, draw=black,shift={(\xaxis:#1/2)},shift={(\yaxis:#2/2)},
  shift={(\zaxis:#3/2)}] (0,0) -- (30:1/2) -- (-30:1/2) --(0,-1/2)--(0,0);
}
\newcommand\cube[4]{
  \topside{#1}{#2}{#3}{#4} \leftside{#1}{#2}{#3}{#4} \rightside{#1}{#2}{#3}{#4}
}
\newcommand\planepartition[3][0]{
 \setcounter{x}{-1}
  \foreach \a in {#2} {
        \addtocounter{x}{1}
        \setcounter{y}{-1}
            \foreach \b in \a {
            \addtocounter{y}{1}
            \setcounter{z}{-1}
            \addtocounter{z}{#1} 
            \ifnum \b>0
            \foreach \c in {1,...,\b} {
                \addtocounter{z}{1}
                \cube{\value{x}}{\value{y}}{\value{z}}{#3}
      }\fi
    }
  }
}
\newcommand\axes[3]{
  \draw[draw=black,line cap=round] (0,0) -- (150:-#1/2);
  \node at (150:-#1/2-0.25) {$x_1$};
  \draw[draw=black,line cap=round] (0,0) -- (30:-#2/2);
  \node at (30:-#2/2-0.25) {$x_2$};
  \draw[draw=black,line cap=round] (0,0) -- (0,#3/2);
  \node at (0,#3/2+0.25) {$x_3$};
}
\newcommand\numberedbox[4]{
    \draw[fill=#4, draw=black] (#1 * 0.5,#2 * 0.5) -- (#1 * 0.5 + 0.5,#2 * 0.5) -- (#1 * 0.5+0.5,#2 * 0.5+0.5) -- (#1 * 0.5,#2 * 0.5 +0.5) -- (#1 * 0.5,#2 * 0.5);
    \draw (#1 * 0.5 + 0.25,#2 * 0.5 + 0.25) node {#3};
}
\newcommand\emptybox[3]{
    \filldraw[fill=#3, draw=black] (#1*0.5,#2*0.5) rectangle (#1*0.5+0.5,#2*0.5+0.5);
}
\newcommand\tableau[4]{
\setcounter{x}{#2-1}
    \foreach \a in {#1} {
    \addtocounter{x}{1}
    \setcounter{y}{#3-1}
        \foreach \b in {1,...,\a} {
            \addtocounter{y}{1}
            \emptybox{\value{x}}{\value{y}}{#4}
        }
    }
}
\newcommand\axestwo[2]{
  \draw[draw=black,line cap=round] (0,0) -- (#1/2,0);
  \node at (#1/2+0.25,0) {$x_1$};
  \draw[draw=black,line cap=round] (0,0) -- (0,#2/2);
  \node at (0,#2/2+0.25) {$x_2$};
}
\title{On combinatorial algebras generated by three commuting matrices}
\author{Ron Cherny, Tam An Le Quang, Matthew Satriano}
\begin{document}

\begin{abstract}
Motzkin and Taussky (and independently, Gerstenhaber) proved that the unital algebra generated by a pair of commuting $d\times d$ matrices over a field has dimension at most $d$. Since then, it has remained an open problem to determine whether the analogous statement is true for triples of matrices which pairwise commute. We answer this question for combinatorially-motivated classes of such triples.
\end{abstract}

\maketitle
\tableofcontents

\section{Introduction}

Let $k$ be a field and consider the following question.

\begin{ques}\label{GQ}
    Let $A_1, \cdots, A_n \in M_d(k)$ be pairwise commuting matrices. Is $\dim_k(k[A_1, \cdots, A_n]) \leq d$?
\end{ques}

For $n = 1$, the statement is true by the Cayley-Hamilton theorem since every matrix satisfies its own characteristic polynomial. For $n = 2$, the statement is true as shown by Motzkin--Taussky \cite{MTT} as well as Gerstenhaber in \cite{Ger61}. For $n \geq 4$, the statement is false; an easy counterexample when $n=4$ is given by $\dim_k(k[E_{13}, E_{14}, E_{23}, E_{24}]) = 5$, where $E_{ij}$ form the standard basis for $M_4(k)$.

The $n = 3$ case of Question \ref{GQ} has been a longstanding open question. Both geometric and algebraic techniques have been used to address it: to begin, the $n=2$ case was handled by considering the algebraic variety $C(2,d)$ parameterizing pairs $(A_1,A_2)$ of commuting $d\times d$ matrices. It was shown in \cite{MTT} that $C(2,d)$ is irreducible, allowing one to reduce to the case of generic pairs $(A_1,A_2)$, which are then diagonalizable and thus easy to handle. When $n\geq 3$, however, the variety $C(n,d)$ of $n$ pairwise commuting matrices is notoriously complicated. For $n\geq4$ and $d\geq4$, $C(n,d)$ has multiple irreducible components \cite{Ger61,Guralnick92}. $C(3,d)$ is irreducible for $d\leq 10$ \cite{sivicII}, and $C(3,d)$ is reducible for $d\geq 29$ \cite{HolOmla,NgoSivic14}. See also \cite{JelisiejewSivic22} for further results on the structure of components of $C(n,d)$. This makes it essentially intractable to address Question \ref{GQ} through a study of $C(3,d)$.

Other approaches to Question \ref{GQ} have been commutative algebraic \cite{Wadsworth, Bergman} and linear algebraic \cite{BH, LL}. Question \ref{GQ} is also known when one imposes certain linear algebraic constraints such as assuming $A_1$ has nullity at most $3$ \cite{GS,sivicII}.  We refer to \cite{SethurSurvey, HolbrookOmeara} for a survey of further results.

\vspace{0.5em}

In this paper, the approach we take is combinatorial. To motivate this, we first note that a straightforward reformulation of Question \ref{GQ} can be given in terms of $S$-modules $N$ with $\dim_k(N)=d$, where $S=k[x_1,\dots,x_n]$. The connection with matrices is given by fixing a $k$-basis of $N$ and letting $A_i$ be the matrix obtained through multiplication by $x_i$ on $N$:

\begin{prop}[see, e.g., {\cite[Proposition 1.3]{RS18}}]\label{GQM}
    \Cref{GQ} is true if and only if for all $S$-modules $N$ that are finite-dimensional over $k$, $\dim_k(S / \Ann(N)) \leq \dim_k(N)$.
\end{prop}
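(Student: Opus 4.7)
The plan is to show both directions by the standard dictionary between representations of the polynomial ring $S = k[x_1,\dots,x_n]$ on a finite-dimensional $k$-vector space and tuples of pairwise commuting matrices. The key observation is that giving an $S$-module structure on $k^d$ is the same data as giving a $k$-algebra homomorphism $\varphi\colon S \to M_d(k)$, and since $S$ is commutative this amounts to specifying $n$ pairwise commuting matrices $A_i = \varphi(x_i)$. Under this correspondence, $\ker(\varphi) = \Ann(N)$ and $\im(\varphi) = k[A_1,\dots,A_n]$, so the first isomorphism theorem gives a canonical identification
\[
S/\Ann(N) \;\cong\; k[A_1,\dots,A_n].
\]

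For the forward implication, I would take an arbitrary finite-dimensional $S$-module $N$ with $\dim_k N = d$, fix any $k$-basis, and let $A_i \in M_d(k)$ be the matrix of multiplication by $x_i$. These pairwise commute because the $x_i$ do in $S$. Applying the hypothesis (the truth of \Cref{GQ}) to the tuple $(A_1,\dots,A_n)$ and using the isomorphism above yields $\dim_k(S/\Ann(N)) = \dim_k k[A_1,\dots,A_n] \leq d = \dim_k N$, as desired.

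For the reverse implication, I would start with pairwise commuting $A_1,\dots,A_n \in M_d(k)$, equip $N := k^d$ with the $S$-module structure in which $x_i$ acts by $A_i$, and again invoke the isomorphism $k[A_1,\dots,A_n] \cong S/\Ann(N)$. The module-theoretic hypothesis then gives $\dim_k k[A_1,\dots,A_n] \leq \dim_k N = d$.

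I do not expect a real obstacle here; the proposition is a reformulation rather than a substantive theorem. The only point requiring a little care is the verification that $\ker(\varphi) = \Ann(N)$ literally from the definitions (an element $f \in S$ annihilates every vector in $N$ iff $\varphi(f) = 0$ in $M_d(k)$), which is immediate. Accordingly, I would keep the exposition to a few lines setting up the homomorphism $\varphi$ and then invoking the first isomorphism theorem in both directions.
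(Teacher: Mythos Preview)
Your proposal is correct and is exactly the standard argument. Note that the paper does not actually supply a proof of this proposition; it merely cites \cite[Proposition 1.3]{RS18}, so there is nothing to compare against beyond observing that your dictionary $S\text{-module structures on }k^d \leftrightarrow n\text{-tuples of commuting }d\times d\text{ matrices}$, together with the first isomorphism theorem applied to $\varphi\colon S\to M_d(k)$, is precisely the intended reasoning.
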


From the module perspective, one may break up Question \ref{GQ} according to the number of generators of $N$. When $N$ has a single generator, it is of the form $S/I$, where this case is obvious as $S/\Ann(N)=S/I=N$. Thus, the first non-trivial case is when $N$ has two generators. Previous work in \cite{RSS20} and \cite{CSS24} considered special combinatorially-motivated modules of this form, which we now describe:





\begin{defn}[{\cite[\S4.2]{RSS20}}, {\cite[Definition 2]{CSS24}}]\label{def:comb-mods}
    Let $I \subseteq K \subseteq S$ and $J \subseteq L \subseteq S$ be monomial ideals of finite colength. Suppose that $\phi : K / I \to L / J$ is a monomial map and an isomorphism of $S$-modules. Then, call the $S$-module $$\frac{S / I \oplus S / J}{\langle(f, -\phi(f)) : f \in K / I\rangle}$$ the module arising from $(I, J, K, L, \phi)$, and say that it is obtained by gluing $S / I$ and $S / J$ along the $S$-module $K / I \cong L / J$. For $N$ any $S$-module, say $N$ is a combinatorial module if $N$ is isomorphic to the module arising from $(I, J, K, L, \phi)$.
\end{defn}

\begin{rem}
    We wish to emphasize that despite the simplicity of these modules, answering Question \ref{GQ} in this context is far from obvious. Indeed, the class of modules in Definition \ref{def:comb-mods} is robust enough to yield a counterexample to Question \ref{GQ} when $n=4$:~one can take
    \begin{align*}
        I&=(x_1,x_2)^2+(x_3,x_4), \\
        J&=(x_1,x_2)+(x_3,x_4)^2, \text{ and} \\
    M&=(S/I\oplus S/J)/\langle(x_1,0)-(0,x_3), (x_2,0)-(0,x_4)\rangle.
    \end{align*}
\end{rem}

We say a combinatorial module is a \emph{counterexample} if it fails the inequality in \Cref{GQM}. Our main result is:


\begin{thm}\label{BIGMAIN}
    Let $N$ be the combinatorial module arising from $(I, J, K, L, \phi)$. Write the $\bbN^3$-graded indecomposable decomposition of the $S$-module $K / I$
    as $$K / I \cong P_1 \oplus \cdots \oplus P_m.$$ Suppose that for all $j$, the embedding of $P_j$ in $S / I$ is not contained in the ideal $(x_3)$. Then, $N$ is not a counterexample.
\end{thm}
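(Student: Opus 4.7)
The plan is to first compute the annihilator of $N$ directly from its presentation, then reduce the desired inequality $\dim_k S/\Ann(N) \leq \dim_k N$ to a purely combinatorial statement, and finally establish that statement using the $x_3$-hypothesis.

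First, applying an element $f \in \Ann(N)$ to the classes of $(1, 0)$ and $(0, 1)$ in $N$, together with the fact that $\phi$ is an isomorphism, forces $f \in I$ and $f \in J$ respectively; the reverse inclusion is immediate, so $\Ann(N) = I \cap J$. Using $\dim_k N = \dim_k S/I + \dim_k S/J - \dim_k K/I$ together with the short exact sequence
\[
0 \to S/(I \cap J) \to S/I \oplus S/J \to S/(I+J) \to 0
\]
giving $\dim_k S/(I \cap J) = \dim_k S/I + \dim_k S/J - \dim_k S/(I+J)$, the desired inequality becomes the combinatorial statement
\[
\dim_k(K/I) \leq \dim_k(S/(I+J)).
\]

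I would establish this by constructing an injection from the monomial basis of $K/I$ (i.e.\ monomials in $K \setminus I$) to that of $S/(I+J)$ (i.e.\ monomials in $S \setminus (I \cup J)$). Splitting the domain as $A_0 \sqcup A_1$ with $A_0 = \{m \in K \setminus I : m \notin J\}$ and $A_1 = \{m \in K \cap J : m \notin I\}$, elements of $A_0$ map to themselves. For $A_1$, the natural candidate is $m \mapsto \phi(m)$: since $\phi(m) \in L \setminus J$ automatically, this works whenever both $\phi(m) \notin I$ (so that the image lies in the target) and $\phi(m) \notin K$ (so that no collision with the image of $A_0$ occurs).

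The main obstacle is producing an injection when the natural choices fail, namely when $m \in A_1$ has $\phi(m) \in I$, or when $\phi(m) \in K \setminus I$ causing a collision with $A_0$. Here the hypothesis enters: on each $\bbN^3$-graded indecomposable summand $P_j$, the isomorphism $\phi$ acts as multiplication by a fixed Laurent monomial $q_j$, and the hypothesis that the embedding $P_j \hookrightarrow S/I$ is not contained in $(x_3)$ forces the $x_3$-exponent of $q_j$ to be non-negative. Since any monomial divisor of an element outside $J$ remains outside $J$, one may correct the map by dividing $\phi(m)$ by powers of $x_3$ (or other variables) to escape $I$ while staying outside $J$. The hardest part will be making these corrections consistently across all of $A_1$ so that the composite assignment is injective; this will likely proceed by a matching argument within each $P_j$ together with ensuring disjointness across distinct summands, crucially leveraging both the $n = 3$ restriction and the asymmetric role played by $x_3$.
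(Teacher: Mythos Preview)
Your reduction to the combinatorial inequality $\dim_k(K/I)\le\dim_k(S/(I+J))$ is correct and coincides with what the paper uses (this is exactly the criterion $|\nu|\le|\lambda\cap\mu|$ cited from \cite{CSS24}). Likewise, your reading of the hypothesis is right: on each indecomposable $P_j$ the map $\phi$ is translation by $\bm{c}_j-\bm{b}_j$, and ``$P_j\not\subseteq(x_3)$'' forces $\bm{b}_{j,3}=0$, hence the $x_3$-exponent of that translation is $\bm{c}_{j,3}\ge0$.

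The gap is that the proof stops precisely where the work begins. You correctly isolate the two obstructions ($\phi(m)\in I$, and collisions $\phi(m)\in K\setminus I$), observe that dividing by $x_3$ preserves ``not in $J$'', and then say the rest ``will likely proceed by a matching argument''. But no such matching is produced, and there is no evident local rule that resolves both obstructions simultaneously and injectively; in particular, iterating $\phi$ or dividing by $x_3$ can cycle between the two bad cases, and nothing in your outline controls interactions between different components $P_j$ inside $\lambda$. This is exactly the content of Sections~3--5 of the paper. There the hypothesis is recast as the floor plan being \emph{right-free}, and the inequality is proved by an induction on $|\nu|$: one shows a two-dimensional statement that the bottom slices already satisfy $|\nu^\circ|\le|\lambda^\circ\cap\mu^\circ|$ (Theorem~5.8, itself a nontrivial minimal-counterexample argument), and then that removing the bottom slice preserves the counterexample property (Proposition~4.8 via Theorem~4.7), with a delicate ``canonical realization'' construction in Section~3 ensuring the reduced object is again of the required form. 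Your ``divide by $x_3$'' intuition is morally the bottom-slice step, but turning it into an actual injection (or any complete argument) requires the surrounding machinery; as written, the proposal does not supply it.
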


Note that there is no restriction on the form $K/I$ must take. This is in contrast to the previously known results in \cite[Theorem 4]{RSS20} and \cite[Theorem 7]{CSS24}: in the former, $K/I$ was required to be of the form $(S / (x_1, x_2, x_3))^{\oplus m}$, which was generalized in the latter reference to include $K/I$ of the form $$\bigoplus_{j = 1}^m S / (x_1, x_2, x_3^{n_j}).$$

\noindent\textbf{Acknowledgments.} We are indebted to Jenna Rajchgot and Oliver Pechenik for their many insights into this problem. We thank Matt Szczesny for helpful discussions.

\section{Combinatorial setup}

We begin by reframing the question combinatorially, following closely the presentation of \cite[\S4]{RSS20} and \cite[\S2]{CSS24}. Let $\leq$ be the standard lattice on $\bbN^n$.

\begin{defn}
    Recall that a Young diagram is a finite subset of $\bbN^n$ closed under $\leq$ and that a skew Young diagram is the difference of two Young diagrams. We will refer to Young diagrams and skew Young diagrams as standard shapes and skew shapes respectively.
\end{defn}

\begin{rem}
    Note that every standard shape is a skew shape. Every skew shape $\sigma$ can be written as $\overline{\sigma} \smallsetminus \underline{\sigma}$, where with $\overline{\sigma}$ the closure of $\sigma$ under $\leq$ and $\underline{\sigma}$ the complement of $\sigma$ in $\overline{\sigma}$, both $\overline{\sigma}$ and $\underline{\sigma}$ are standard shapes.
\end{rem}

\begin{exa}
    With $n = 2$, we depict an example skew shape $\sigma$ and its resulting $\overline{\sigma}$ and $\underline{\sigma}$.
    \begin{center}
        \begin{tikzpicture}[caption=$\sigma$]
            \axestwo{5}{4}
            \emptybox{3}{0}{gray!20!white}
            \emptybox{1}{1}{gray!20!white}
            \emptybox{2}{1}{gray!20!white}
            \emptybox{1}{2}{gray!20!white}
        \end{tikzpicture}
        \begin{tikzpicture}[caption=$\overline{\sigma}$]
            \axestwo{5}{4}
            \tableau{3,3,2,1}{0}{0}{gray!20!white}
        \end{tikzpicture}
        \begin{tikzpicture}[caption=$\underline{\sigma}$]
            \axestwo{5}{4}
            \tableau{3,1,1}{0}{0}{gray!20!white}
        \end{tikzpicture}
    \end{center}
\end{exa}

\begin{defn}
    Let $e_1, \cdots, e_n$ be the standard basis vectors of $\bbN^n$. Given a skew shape $\sigma$, let $\sim$ be an equivalence relation on $\sigma$, where for $v, w \in \sigma$, $v \sim w$ if and only if there is a sequence $v = v_0, \cdots, v_m = w \in \sigma$ where for $j \in [m]$, $v_j - v_{j - 1} \in \{\pm e_1, \cdots, \pm e_n\}$. Call the sequence a path of length $m$ from $v$ to $w$ in $\sigma$ and call the equivalence classes the connected components of $\sigma$. Say $\sigma$ is connected if it has a single connected component.
\end{defn}

\begin{defn}
    Say two skew shapes $\sigma$ and $\tau$ are translationally equivalent if there is some $v \in \bbZ^n$ with $\sigma + v = \tau$.
\end{defn}

\begin{rem}
    Given a nonempty skew shape $\sigma$, denote by $\wedge\sigma$ the meet of $\sigma$ in the lattice $\bbN^n$. Observe that $\sigma - \wedge\sigma$ is a skew shape translationally equivalent to $\sigma$, and that two nonempty skew shapes $\sigma$ and $\tau$ are translationally equivalent if and only if $\sigma - \wedge\sigma = \tau - \wedge\tau$.
\end{rem}

\begin{exa}
    With $n = 2$, we depict an example skew shape $\sigma$ and its resulting $\sigma - \wedge\sigma$. Observe that $\wedge\sigma$, marked below by $\bullet$, need not necessarily lie in $\sigma$.
    \begin{center}
        \begin{tikzpicture}[caption=$\sigma$]
            \axestwo{5}{4}
            \emptybox{3}{0}{gray!20!white}
            \emptybox{1}{1}{gray!20!white}
            \emptybox{2}{1}{gray!20!white}
            \emptybox{1}{2}{gray!20!white}
            \node at (0.75, 0.25) {$\bullet$};
        \end{tikzpicture}
        \begin{tikzpicture}[caption=$\sigma - \wedge\sigma$]
            \axestwo{5}{4}
            \emptybox{2}{0}{gray!20!white}
            \emptybox{0}{1}{gray!20!white}
            \emptybox{1}{1}{gray!20!white}
            \emptybox{0}{2}{gray!20!white}
        \end{tikzpicture}
    \end{center}
\end{exa}

\begin{defn}
    Say a skew shape $\sigma$ is an abstract skew shape if $\sigma$ is nonempty and $\sigma = \sigma - \wedge\sigma$.
\end{defn}

 Now, let $I \subseteq K \subseteq S$ and $J \subseteq L \subseteq S$ be monomial ideals of finite colength. Recall that standard shapes are in an inclusion-reversing bijection with finite colength monomial ideals of $S$, and observe that a quotient of finite colength monomial ideals of $S$ then corresponds to the skew shape given by the difference of the standard shapes of the ideals under the bijection.

 We recall the following result from \cite{CSS24}:

 \begin{lem}[{\cite[Lemma 9]{CSS24}}]\label{LEM_BIJ}
     Let $\zeta$ be the corresponding skew shape to $K / I$ and $\xi$ the corresponding skew shape to $L / J$. Then, $S$-module isomorphisms $\phi : K / I \to L / J$ that are monomial maps correspond to bijections $\varphi : \zeta \to \xi$ that act translationally on connected components; that is, if $\sigma \subseteq \zeta$ is a connected component, then there exists some $v \in \bbZ^n$ for which for all $w \in \sigma$, $\varphi(w) = w + v$. In particular, $\sigma$ and $\varphi(\sigma)$ are translationally equivalent. 
 \end{lem}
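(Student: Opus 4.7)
My plan is to exploit the compatibility between the monomial $k$-basis of $K/I$ (indexed by the cells of $\zeta$) and its $S$-module structure: for $v \in \zeta$, multiplication by $x_i$ sends the basis monomial $x^v$ to $x^{v+e_i}$ if $v+e_i \in \zeta$ and to zero otherwise; the analogous statement holds for $\xi$ and $L/J$. Thus the $S$-action is essentially a directed graph structure on $\zeta$ with edges in the positive coordinate directions, and a module map must preserve it.

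For the forward direction, I would observe that a monomial $S$-module isomorphism $\phi: K/I \to L/J$ must send each basis element $x^v$ to a nonzero scalar multiple of some $x^{\varphi(v)}$ with $\varphi(v) \in \xi$; since $\phi$ is an isomorphism, $\varphi: \zeta \to \xi$ is automatically a set bijection. Next, whenever both $v$ and $v+e_i$ lie in $\zeta$, the equation $\phi(x_i \cdot x^v) = x_i \cdot \phi(x^v)$ forces $\varphi(v+e_i) = \varphi(v)+e_i$. Iterating this along a path witnessing the equivalence relation shows that $\varphi(w)-w$ is constant on each connected component $\sigma \subseteq \zeta$, which is the desired translational property. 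Applying the same reasoning to $\phi^{-1}$ shows that $\varphi^{-1}$ is also translational on components of $\xi$, so $\varphi(\sigma)$ is an entire connected component of $\xi$.

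For the reverse direction, given a bijection $\varphi$ that translates each connected component of $\zeta$ onto a component of $\xi$, I would define $\phi$ on the monomial basis by $\phi(x^v) = x^{\varphi(v)}$ and extend $k$-linearly. Verifying $S$-linearity reduces to checking $\phi(x_i \cdot x^v) = x_i \cdot \phi(x^v)$ for each $i$ and each $v \in \zeta$. When $v+e_i \in \zeta$, the translational action on the component containing $v$ immediately yields $\varphi(v+e_i) = \varphi(v)+e_i$, so both sides agree. When $v+e_i \notin \zeta$, the left-hand side vanishes, and one must rule out that $\varphi(v)+e_i \in \xi$; this follows because $\varphi(v)$ and $\varphi(v)+e_i$ would then lie in the same connected component of $\xi$, which under $\varphi^{-1}$ pulls back translationally to force $v+e_i \in \zeta$, a contradiction.

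I expect the main obstacle to be precisely this boundary check in the reverse direction: one must rule out that $\varphi$ glues two separate components of $\zeta$ into a single adjacency-connected piece of $\xi$. The argument above resolves this via the symmetric role played by $\varphi^{-1}$ and the fact that adjacency in $\xi$ witnesses connectedness there. Implicit in the statement is the reading that \emph{acting translationally on connected components} means sending each component of $\zeta$ bijectively onto a component of $\xi$ (equivalently, that both $\varphi$ and $\varphi^{-1}$ are translational on components of their respective domains); without this reading, the correspondence breaks down in the reverse direction.
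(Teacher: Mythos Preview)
The paper does not actually prove this lemma; it is quoted verbatim from \cite[Lemma 9]{CSS24} and no argument is supplied here. So there is no in-paper proof to compare against.

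That said, your argument is the natural one and is correct. The forward direction is routine once you observe that $S$-linearity forces $\varphi(v+e_i)=\varphi(v)+e_i$ whenever $v,v+e_i\in\zeta$, and that this propagates along paths in both directions. You are also right to flag the only real subtlety: in the reverse direction, the literal reading of ``acts translationally on connected components'' (translation on each component of $\zeta$) is \emph{not} by itself enough to guarantee $S$-linearity, because two components of $\zeta$ could in principle land adjacently inside a single component of $\xi$; one needs that $\varphi$ carries components of $\zeta$ onto components of $\xi$, equivalently that $\varphi^{-1}$ is translational on components of $\xi$. Your resolution---deducing this in the forward direction from $\phi^{-1}$, and in the reverse direction taking it as part of the hypothesis---is exactly the intended reading, and is consistent with how the paper uses the lemma (cf.\ the remark immediately following it, which pairs up components of $\zeta$ and $\xi$).
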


\begin{rem}
    \Cref{LEM_BIJ} says in particular that if $K / I$ and $L / J$ are isomorphic as $S$-modules through a monomial map, then their corresponding skew shapes $\zeta$ and $\xi$ have the same number of connected components. Moreover, there is an ordering of $\zeta_1, \cdots, \zeta_\ell$ the connected components of $\zeta$ and $\xi_1, \cdots, \xi_\ell$ the connected components of $\xi$ so that $\zeta_1 - \wedge\zeta_1, \cdots, \zeta_\ell - \wedge\zeta_\ell$ and $\xi_1 - \wedge\xi_1, \cdots, \xi_\ell - \wedge\xi_\ell$ are identical sequences.
\end{rem}

\begin{rem}[{\cite[Corollary 10]{CSS24}}]\label{REM_BIJlm}
    Let $\lambda$ be the corresponding standard shape to $S / I$ and $\mu$ the corresponding standard shape to $S / J$. Suppose $\phi : K / I \to L / J$ is a monomial map and an $S$-module isomorphism. Let $\zeta_1, \cdots, \zeta_\ell$ and $\xi_1, \cdots, \xi_\ell$ be as above, ordered so that $\xi_j = \phi(\zeta_j)$ for $j \in [\ell]$. Let $\nu_1, \cdots, \nu_\ell$ be connected abstract skew shapes and $\bm{b}_1, \cdots, \bm{b}_\ell$ and $\bm{c}_1, \cdots, \bm{c}_\ell$ be elements of $\bbN^n$ such that for $j \in [\ell]$, $\nu_j + \bm{b}_j = \zeta_j$ and $\nu_j + \bm{c}_j = \xi_j$; i.e., $\zeta_j - \wedge\zeta_j = \nu_j = \xi_j - \wedge\xi_j$, $\bm{b}_j = \wedge\zeta_j$, and $\bm{c}_j = \wedge\xi_j$. Then, observe that:
    \begin{enumerate}[(i)]
        \item $\nu_1 + \bm{b}_1, \cdots, \nu_\ell + \bm{b}_\ell$ are disjoint, contained in $\lambda$, and closed under $\geq$ in $\lambda$
        \item $\nu_1 + \bm{c}_1, \cdots, \nu_\ell + \bm{c}_\ell$ are disjoint, contained in $\mu$, and closed under $\geq$ in $\mu$
    \end{enumerate}

    In fact, there is a bijective correspondence between choices $(\lambda, \mu, \nu, \bm{b}, \bm{c})$ satisfying these conditions and modules arising from $(I, J, K, L, \phi)$ paired with an ordering on the connected components of $K / I$. In particular, it is well-defined to speak of the module arising from $(\lambda, \mu, \nu, \bm{b}, \bm{c})$, and it follows that every combinatorial module is isomorphic to a module arising from some $(\lambda, \mu, \nu, \bm{b}, \bm{c})$ and vice-versa.

    For such a module, we will refer to $\lambda$ as the left side of the module and $\mu$ as the right side.
\end{rem}

\begin{exa}
    With notation as per the previous remark, let $n = 2$, $K = L = (x_1, x_2)^3$, $I = (x_1^4, x_1^3x_2, x_1^2x_2^2, x_2^4)$, and $J = (x_1^4, x_1^3x_2, x_1x_2^3, x_2^4)$, with corresponding standard shapes as depicted.
    \begin{center}
        \begin{tikzpicture}[caption=${S / K = S / L}$]
            \axestwo{5}{5}
            \tableau{3,2,1}{0}{0}{gray!20!white}
        \end{tikzpicture}
        \begin{tikzpicture}[caption=${\lambda = S / I}$]
            \axestwo{5}{5}
            \tableau{4,4,2,1}{0}{0}{gray!20!white}
        \end{tikzpicture}
        \begin{tikzpicture}[caption=${\mu = S / J}$]
            \axestwo{5}{5}
            \tableau{4,3,3,1}{0}{0}{gray!20!white}
        \end{tikzpicture}
    \end{center}
    We then fix an arbitrary ordering on the connected components of $\zeta = K / I$, which determines an ordering on the connected abstract skew shapes $\nu$.
    \begin{center}
        \begin{tikzpicture}[caption=${\zeta = K / I}$]
            \axestwo{5}{5}
            \numberedbox{0}{3}{$\zeta_1$}{gray!20!white}
            \numberedbox{1}{3}{$\zeta_1$}{gray!20!white}
            \numberedbox{1}{2}{$\zeta_1$}{gray!20!white}
            \numberedbox{2}{1}{$\zeta_2$}{gray!50!white}
            \numberedbox{3}{0}{$\zeta_3$}{gray!80!white}
        \end{tikzpicture}
        \hspace{-1em}
        \begin{tikzpicture}[caption=$\nu_1$]
            \axestwo{3}{3}
            \emptybox{0}{1}{gray!20!white}
            \emptybox{1}{1}{gray!20!white}
            \emptybox{1}{0}{gray!20!white}
        \end{tikzpicture}
        \hspace{-1em}
        \begin{tikzpicture}[caption=$\nu_2$]
            \axestwo{2}{2}
            \emptybox{0}{0}{gray!50!white}
        \end{tikzpicture}
        \hspace{-1em}
        \begin{tikzpicture}[caption=$\nu_3$]
            \axestwo{2}{2}
            \emptybox{0}{0}{gray!80!white}
        \end{tikzpicture}
        \hspace{-1em}
        \begin{tikzpicture}[caption=${\xi = L / J}$]
            \axestwo{5}{5}
            \emptybox{0}{3}{gray!20!white}
            \emptybox{1}{2}{gray!20!white}
            \emptybox{2}{2}{gray!20!white}
            \emptybox{2}{1}{gray!20!white}
            \emptybox{3}{0}{gray!20!white}
        \end{tikzpicture}
    \end{center}
    Then, we see that $\bm{b} = ((0, 2), (2, 1), (3, 0))$, and that there are precisely two $S$-module isomorphisms $\phi : K / I \to L / J$ that are also monomial maps, corresponding to $\bm{c} = ((1, 1), (3, 0), (0, 3))$ and $\bm{c} = ((1, 1), (0, 3), (3, 0))$.
    \begin{center}
        \begin{tikzpicture}[caption=${\bm{c} = ((1, 1), (3, 0), (0, 3))}$]
            \axestwo{5}{5}
            \numberedbox{1}{2}{$\xi_1$}{gray!20!white}
            \numberedbox{2}{2}{$\xi_1$}{gray!20!white}
            \numberedbox{2}{1}{$\xi_1$}{gray!20!white}
            \numberedbox{3}{0}{$\xi_2$}{gray!50!white}
            \numberedbox{0}{3}{$\xi_3$}{gray!80!white}
        \end{tikzpicture}
        \hspace{3em}
        \begin{tikzpicture}[caption=${\bm{c} = ((1, 1), (0, 3), (3, 0))}$]
            \axestwo{5}{5}
            \numberedbox{1}{2}{$\xi_1$}{gray!20!white}
            \numberedbox{2}{2}{$\xi_1$}{gray!20!white}
            \numberedbox{2}{1}{$\xi_1$}{gray!20!white}
            \numberedbox{3}{0}{$\xi_3$}{gray!80!white}
            \numberedbox{0}{3}{$\xi_2$}{gray!50!white}
        \end{tikzpicture}
    \end{center}
\end{exa}

Translating \Cref{GQM} into the language of skew shapes, we have:

\begin{prop}[{\cite[Proposition 12]{CSS24}}]
    The module arising from $(\lambda, \mu, \nu, \bm{b}, \bm{c})$ is a counterexample if and only if $|\nu| > |\lambda \cap \mu|$, where if $\nu$ is the sequence $\nu_1, \cdots, \nu_\ell$ then $|\nu|$ denotes the sum $|\nu_1| + \cdots + |\nu_\ell|$.
\end{prop}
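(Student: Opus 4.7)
The plan is to extract from the data $(\lambda, \mu, \nu, \bm{b}, \bm{c})$ explicit formulas for both $\dim_k(N)$ and $\dim_k(S/\Ann(N))$, and then compare them via \Cref{GQM}.

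\textbf{Step 1.} I would first compute $\dim_k(N)$. Present $N$ as the cokernel of the map $K/I \to S/I \oplus S/J$ sending $f \mapsto (f, -\phi(f))$. This map is injective, because projecting onto the first coordinate recovers the inclusion $K/I \hookrightarrow S/I$. So
$$\dim_k(N) = \dim_k(S/I) + \dim_k(S/J) - \dim_k(K/I).$$
Using $\dim_k(S/I) = |\lambda|$, $\dim_k(S/J) = |\mu|$, and $\dim_k(K/I) = |\zeta| = |\nu|$ (the last equality since each $\zeta_j = \nu_j + \bm{b}_j$ is a translate of $\nu_j$ by \Cref{REM_BIJlm}), this gives $\dim_k(N) = |\lambda| + |\mu| - |\nu|$.

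\textbf{Step 2.} Next I would show $\Ann(N) = I \cap J$. Since $N$ is generated over $S$ by $e_1 := \overline{(1,0)}$ and $e_2 := \overline{(0,1)}$, it suffices to determine when $g \cdot e_1 = 0$ and when $g \cdot e_2 = 0$ in $N$. For the former, $g\cdot e_1=0$ means $(g, 0) = (h, -\phi(h))$ in $S/I \oplus S/J$ for some $h \in K/I$. Reading the second coordinate gives $\phi(h) \equiv 0 \pmod{J}$; since $\phi$ is an isomorphism $K/I \to L/J$, this forces $h \equiv 0 \pmod{I}$, and then the first coordinate yields $g \in I$. The reverse containment is immediate. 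By symmetry $g \cdot e_2 = 0$ iff $g \in J$, so $\Ann(N) = I \cap J$.

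\textbf{Step 3.} Since $I$ and $J$ are monomial ideals, so is $I \cap J$, and a $k$-basis of $S/(I \cap J)$ consists of the monomials lying outside $I \cap J$, i.e., those lying in $\lambda \cup \mu$. Hence
$$\dim_k(S/\Ann(N)) = |\lambda \cup \mu| = |\lambda| + |\mu| - |\lambda \cap \mu|.$$

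\textbf{Step 4.} By \Cref{GQM}, $N$ is a counterexample iff $\dim_k(S/\Ann(N)) > \dim_k(N)$; substituting the formulas from Steps 1 and 3 and cancelling $|\lambda| + |\mu|$ yields the equivalent inequality $|\nu| > |\lambda \cap \mu|$.

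The main obstacle is Step 2, where one must rule out the possibility that the relations $(f, -\phi(f))$ collapse additional elements in the $S/I$ summand beyond what is forced. This is precisely where injectivity of $\phi$ is used, and the argument relies on reading the two coordinates of the relation separately — once to get a constraint through $\phi$, and once to transfer that constraint back to $g$. The remaining steps are bookkeeping about monomial bases of skew shapes.
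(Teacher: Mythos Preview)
Your proof is correct. The paper does not actually prove this proposition; it simply cites \cite[Proposition 12]{CSS24} and states the result without argument. Your proposal supplies exactly the kind of direct verification one would expect: compute $\dim_k(N)$ from the short exact sequence, identify $\Ann(N)=I\cap J$ using the two generators and the injectivity of $\phi$, and then translate the inequality in \Cref{GQM} into the combinatorial condition $|\nu|>|\lambda\cap\mu|$ via inclusion--exclusion on $|\lambda\cup\mu|$. Each step is sound, including the point you flag in Step~2: because $\phi$ is an $S$-linear isomorphism and $K/I$ is already an $S$-module, the relation submodule is exactly $\{(h,-\phi(h)):h\in K/I\}$, so no hidden relations appear.
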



\section{Floor plans}

We proceed by reducing the class of combinatorial modules necessary to consider in producing a counterexample. In doing so, we extend the notion of floor plans introduced in \cite{CSS24} to allow for arbitrary sequences of connected abstract skew shapes $\nu$.

\begin{lem}\label{LEM_SCA}
    Suppose the module arising from $(\lambda, \mu, \nu, \bm{b}, \bm{c})$ is a counterexample. Write $\nu = (\nu_1, \cdots, \nu_\ell)$ and let $\lambda'$ be the closure of $(\nu_1 + \bm{b}_1) \sqcup \cdots \sqcup (\nu_\ell + \bm{b}_\ell)$ under $\leq$. Define $\mu'$ symmetrically. Then, the module arising from $(\lambda', \mu', \nu, \bm{b}, \bm{c})$ is also a counterexample.
\end{lem}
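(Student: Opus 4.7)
The plan is to verify that $(\lambda', \mu', \nu, \bm{b}, \bm{c})$ is a valid tuple in the sense of Remark \ref{REM_BIJlm}, and then observe that the counterexample inequality is preserved because $\lambda' \cap \mu'$ is contained in $\lambda \cap \mu$.

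First, I would check validity. By construction $\lambda'$ is the $\leq$-closure of a finite subset of $\bbN^n$, so it is a finite standard shape, and symmetrically for $\mu'$. The shapes $\nu_j + \bm{b}_j$ are contained in $\lambda'$ by definition of $\lambda'$, and they remain pairwise disjoint because they were disjoint as subsets of $\lambda$. The only slightly nontrivial point is closure under $\geq$ within $\lambda'$: if $v \in \nu_j + \bm{b}_j$ and $w \in \lambda'$ satisfies $w \geq v$, then I would note that $\lambda' \subseteq \lambda$ (since $\lambda$ is already $\leq$-closed and contains each $\nu_j + \bm{b}_j$), so $w \in \lambda$, and by hypothesis (i) on the original tuple, $\nu_j + \bm{b}_j$ is closed under $\geq$ in $\lambda$, forcing $w \in \nu_j + \bm{b}_j$. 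The same argument gives (ii) for $\mu'$.

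Next, I would establish $\lambda' \subseteq \lambda$ and $\mu' \subseteq \mu$ as used above, which immediately gives the containment $\lambda' \cap \mu' \subseteq \lambda \cap \mu$, and hence $|\lambda' \cap \mu'| \leq |\lambda \cap \mu|$. Combining with the counterexample hypothesis $|\nu| > |\lambda \cap \mu|$ from the preceding proposition yields $|\nu| > |\lambda' \cap \mu'|$, so by that same proposition the module arising from $(\lambda', \mu', \nu, \bm{b}, \bm{c})$ is a counterexample.

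There is no serious obstacle here; the lemma is essentially a monotonicity statement. The one place that requires any care is making sure the closure-under-$\geq$ conditions transfer to the shrunken shapes $\lambda'$ and $\mu'$, but this is automatic since shrinking the ambient standard shape can only make an order filter condition easier to satisfy. In writing it up I would only need a sentence or two for validity and a single line for the numerical inequality.
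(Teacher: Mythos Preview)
Your proposal is correct and mirrors the paper's proof essentially verbatim: verify the tuple is valid via \Cref{REM_BIJlm} using $\lambda'\subseteq\lambda$ (so closure under $\geq$ transfers), then conclude from $|\nu|>|\lambda\cap\mu|\geq|\lambda'\cap\mu'|$. The paper states these steps more tersely but the argument is the same.
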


\begin{proof}
    Clearly, $\lambda' \subseteq \lambda$ and $\mu' \subseteq \mu$ are standard shapes. As $\nu_1 + \bm{b}_1, \cdots, \nu_\ell + \bm{b}_\ell$ are disjoint and closed under $\geq$ in $\lambda$, and they are contained in $\lambda'$ by construction, they are disjoint and closed under $\geq$ in $\lambda'$. The same statements hold for $\nu_1 + \bm{c}_1, \cdots, \nu_\ell + \bm{c}_\ell$ in $\mu'$. So, by \Cref{REM_BIJlm}, $(\lambda', \mu', \nu, \bm{b}, \bm{c})$ does give rise to a module. Moreover, as $|\nu| > |\lambda \cap \mu| \geq |\lambda' \cap \mu'|$, this module is a counterexample.
\end{proof}

\begin{defn}
    Suppose we are given a sequence $\nu_1, \cdots, \nu_\ell$ of connected abstract skew shapes and $\bm{b}_1, \cdots, \bm{b}_\ell,\allowbreak \bm{c}_1, \cdots, \bm{c}_\ell \in \bbN^n$ for which $\nu_1 + \bm{b}_1, \cdots, \nu_\ell + \bm{b}_\ell$ are disjoint and $\nu_1 + \bm{c}_1, \cdots, \nu_\ell + \bm{c}_\ell$ are disjoint. Denote by $\lambda_{(\nu, \bm{b})}$ the closure of $(\nu_1 + \bm{b}_1) \sqcup \cdots \sqcup (\nu_\ell + \bm{b}_\ell)$ under $\leq$ and $\mu_{(\nu, \bm{c})}$ the closure of $(\nu_1 + \bm{c}_1) \sqcup \cdots \sqcup (\nu_\ell + \bm{c}_\ell)$ under $\leq$. If $\nu_1 + \bm{b}_1, \cdots, \nu_\ell + \bm{b}_\ell$ are closed under $\geq$ in $\lambda_{(\nu, \bm{b})}$ and $\nu_1 + \bm{c}_1, \cdots, \nu_\ell + \bm{c}_\ell$ are closed under $\geq$ in $\mu_{(\nu, \bm{c})}$, then define the module arising from $(\nu, \bm{b}, \bm{c})$ to be the module arising from $(\lambda_{(\nu, \bm{b})}, \mu_{(\nu, \bm{c})}, \nu, \bm{b}, \bm{c})$. Note that this indeed yields a combinatorial module by \Cref{REM_BIJlm}. For $N$ any $S$-module, say $N$ is a scaffolded combinatorial module if $N$ is isomorphic to the module arising from $(\nu, \bm{b}, \bm{c})$.
\end{defn}

\begin{exa}\label{EXA_LPLUSS}
    Suppose $n = 3$ and $\nu$ is given by the following:
    \begin{center}
        \begin{tikzpicture}[caption=$\nu_1$]
            \axes{3}{3}{3}
            \planepartition[1]{{1}}{lightgray}
            \planepartition{{0,2}}{lightgray}
        \end{tikzpicture}
        \begin{tikzpicture}[caption=$\nu_2$]
            \axes{3}{3}{3}
            \planepartition{{1}}{darkgray}
        \end{tikzpicture}
    \end{center}
    Then, with $\bm{b} = ((0, 0, 1), (0, 2, 1))$, we depict a possible choice of $\lambda$ compatible with the choice of $\nu$ and $\bm{b}$, alongside $\lambda_{(\nu, \bm{b})}$. Observe that $\lambda_{(\nu, \bm{b})} \subseteq \lambda$.
    \begin{center}
        \begin{tikzpicture}[caption=$\lambda$]
            \axes{4}{4}{4}
            \planepartition{{3,3,1},{2,1},{2}}{white}
            \planepartition[2]{{1}}{lightgray}
            \planepartition[1]{{0,2}}{lightgray}
            \planepartition[1]{{},{},{1}}{darkgray}
        \end{tikzpicture}
        \begin{tikzpicture}[caption=$\lambda_{(\nu, \bm{b})}$]
            \axes{4}{4}{4}
            \planepartition{{3,3},{2},{2}}{white}
            \planepartition[2]{{1}}{lightgray}
            \planepartition[1]{{0,2}}{lightgray}
            \planepartition[1]{{},{},{1}}{darkgray}
        \end{tikzpicture}
    \end{center}
\end{exa}

In general, given a module arising from $(\nu, \bm{b}, \bm{c})$, we will assume that $\nu$ is the sequence $\nu_1, \cdots, \nu_\ell$ and we will denote the $\lambda_{(\nu, \bm{b})}$ and $\mu_{(\nu, \bm{c})}$ defined above by just $\lambda$ and $\mu$ when it is clear from context.

By \Cref{LEM_SCA}, it suffices to consider scaffolded combinatorial modules. Fix now $n = 3$. Let $\pi : \bbN^3 \to \bbN^2$ be the projection onto the first two coordinates. We will also implicitly embed $\bbN^2$ into $\bbN^3$ via the first two coordinates, in particular allowing for expressions of the form $\sigma + a$ where $\sigma \subseteq \bbN^3$ and $a \in \bbN^2$.

Suppose $(\nu, \bm{b}, \bm{c})$ were a counterexample. The same argument to the proof of \Cref{LEM_SCA} shows that if $(\nu, \bm{b}', \bm{c}')$ gave rise to a module satisfying $\bm{b}'_j \leq \bm{b}_j$ and $\bm{c'}_j \leq \bm{c}_j$ for all $j \in [\ell]$, then $(\nu, \bm{b}', \bm{c}')$ would also be a counterexample. We introduce a partial order by writing $(\nu, \bm{b}', \bm{c}') \leq (\nu, \bm{b}, \bm{c})$. In particular, given a counterexample $(\nu, \bm{b}, \bm{c})$, we may ask for a minimal counterexample $(\nu, \bm{b}', \bm{c}') \leq (\nu, \bm{b}, \bm{c})$, with the additional constraints that $\pi(\bm{b}'_j) = \pi(\bm{b}_j)$ and $\pi(\bm{c}'_j) = \pi(\bm{c}_j)$ for all $j \in [\ell]$. In this section, we will show that there is a minimum $(\nu, \bm{b}', \bm{c}') \leq (\nu, \bm{b}, \bm{c})$ satisfying these constraints.
\begin{exa}\label{EXA_LPLUSS2}
    Take $\nu$ and $\bm{b}$ as given in \Cref{EXA_LPLUSS}. Then, $\pi(\bm{b}_1) = (0, 0)$ and $\pi(\bm{b}) = (0, 2)$, and it is clear that $\bm{b}' = ((0, 0, 0), (0, 2, 0))$ is the minimum $\bm{b}' \leq \bm{b}$ with $\pi(\bm{b}'_1) = \pi(\bm{b}_1)$ and $\pi(\bm{b}'_2) = \pi(\bm{b}_2)$ for which $\lambda_{(\nu, \bm{b}')}$ satisfies condition (i) of \Cref{REM_BIJlm}.
    \begin{center}
        \begin{tikzpicture}[caption=$\lambda_{(\nu, \bm{b}')}$]
            \axes{4}{4}{4}
            \planepartition{{2,2},{1},{1}}{white}
            \planepartition[1]{{1}}{lightgray}
            \planepartition{{0,2}}{lightgray}
            \planepartition{{},{},{1}}{darkgray}
        \end{tikzpicture}
    \end{center}
\end{exa}

Observe first that if $\sigma \subseteq \bbN^3$ is a skew shape, then so too is $\pi(\sigma) \subseteq \bbN^2$.

\begin{lem}\label{LEM_PROJDIS}
    Given a module arising from $(\nu, \bm{b}, \bm{c})$, for $i, j \in [\ell], i \neq j$, $\pi(\nu_i + \bm{b}_i)$ and $\pi(\nu_j + \bm{b}_j)$ are disjoint, and $\pi(\nu_i + \bm{c}_i)$ and $\pi(\nu_j + \bm{c}_j)$ are disjoint.
\end{lem}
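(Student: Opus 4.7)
The plan is to argue by contradiction, exploiting the ``closed under $\geq$'' property recorded in condition (i) of \Cref{REM_BIJlm}. Only one argument is needed since the two cases ($\bm{b}$ and $\bm{c}$) are symmetric; I will state it for $\bm{b}$.

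Suppose toward contradiction that some $a \in \bbN^2$ lies in $\pi(\nu_i + \bm{b}_i) \cap \pi(\nu_j + \bm{b}_j)$ for some $i \neq j$. Then there exist $u \in \nu_i + \bm{b}_i$ and $v \in \nu_j + \bm{b}_j$ with $\pi(u) = \pi(v) = a$, so $u$ and $v$ agree in their first two coordinates and differ only in the third. Since $\nu_i + \bm{b}_i$ and $\nu_j + \bm{b}_j$ are disjoint, $u \neq v$, and after relabeling $i$ and $j$ we may assume $u_3 < v_3$, giving $u \leq v$ in the standard lattice on $\bbN^3$.

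Now both $u$ and $v$ lie in $\lambda$ (since each $\nu_k + \bm{b}_k \subseteq \lambda$). By condition (i) of \Cref{REM_BIJlm}, $\nu_i + \bm{b}_i$ is closed under $\geq$ inside $\lambda$, so from $u \in \nu_i + \bm{b}_i$ and $v \in \lambda$ with $v \geq u$ we conclude $v \in \nu_i + \bm{b}_i$. This contradicts the disjointness of $\nu_i + \bm{b}_i$ and $\nu_j + \bm{b}_j$, establishing the claim for $\bm{b}$. Swapping $\bm{b}$ for $\bm{c}$ and invoking the analogous condition on $\mu$ yields the statement for $\bm{c}$.

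There is no real obstacle here; the entire argument is just the observation that two points of $\bbN^3$ with the same $\pi$-image are comparable in the product order, so the ``closed under $\geq$'' property forces them into the same component. The lemma is essentially a warm-up that isolates exactly the feature of condition (i) that the later arguments (e.g., constructing a minimum $(\nu,\bm{b}',\bm{c}') \leq (\nu,\bm{b},\bm{c})$ with prescribed $\pi$-images) will rely on.
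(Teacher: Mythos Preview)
Your proof is correct and essentially identical to the paper's: both argue by symmetry, take a common point in the projections, lift to two comparable points of $\bbN^3$, and use that $\nu_i+\bm{b}_i$ is closed under $\geq$ in $\lambda$ to force a violation of disjointness. The only cosmetic difference is that the paper allows $a_3\le a_3'$ rather than strict inequality, which is immaterial.
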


\begin{proof}
    By symmetry, it suffices to show that $\pi(\nu_i + \bm{b}_i)$ and $\pi(\nu_j + \bm{b}_j)$ are disjoint. Suppose otherwise, and take $(a_1, a_2) \in \pi(\nu_i + \bm{b}_i) \cap \pi(\nu_j + \bm{b}_j)$. Then, there are $a_3, a_3' \geq 0$ with $(a_1, a_2, a_3) \in \nu_i + \bm{b}_i$ and $(a_1, a_2, a_3') \in \nu_j + \bm{b}_j$. Suppose without loss of generality that $a_3 \leq a_3'$. Then, $(a_1, a_2, a_3') \in \lambda$, and $\nu_i + \bm{b}_i$ is closed under $\geq$ in $\lambda$, so $(a_1, a_2, a_3') \in \nu_i + \bm{b}_i$, contradicting the disjointedness of $\nu_i + \bm{b}_i$ and $\nu_j + \bm{b}_j$.
\end{proof}

\begin{defn}
    A floor plan $(\nu, b, c)$ is a sequence $\nu_1, \cdots, \nu_\ell \subseteq \bbN^3$ of connected abstract skew shapes and sequences $b_1, \cdots, b_\ell, \allowbreak c_1, \cdots, c_\ell \in \bbN^2$ such that for $i, j \in [\ell], i \neq j$, $\pi(\nu_i) + b_i$ and $\pi(\nu_j) + b_j$ are disjoint and $\pi(\nu_i) + c_i$ and $\pi(\nu_j) + c_j$ are disjoint.
\end{defn}

\begin{cor}
    Given a module arising from $(\nu, \bm{b}, \bm{c})$, for $j \in [\ell]$, $\pi(\nu_j + \bm{b}_j) = \pi(\nu_j) + \pi(\bm{b}_j)$ and $\pi(\nu_j + \bm{c}_j) = \pi(\nu_j) + \pi(\bm{c}_j)$. Let $b_j = \pi(\bm{b}_j)$ and $c_j = \pi(\bm{c}_j)$. Then, $(\nu, b, c)$ is a floor plan.
\end{cor}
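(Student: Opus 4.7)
The plan is to observe that this corollary is essentially immediate from the preceding \Cref{LEM_PROJDIS} together with the linearity of the projection $\pi$, so the work is just unpacking the definitions.

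First, I would establish the two stated equalities $\pi(\nu_j + \bm{b}_j) = \pi(\nu_j) + \pi(\bm{b}_j)$ and $\pi(\nu_j + \bm{c}_j) = \pi(\nu_j) + \pi(\bm{c}_j)$. Since $\pi : \bbN^3 \to \bbN^2$ is the restriction of a linear map (projection onto the first two coordinates) and the Minkowski sums in question are just pointwise translations of the skew shape by a single vector, this reduces to checking that $\pi(w + v) = \pi(w) + \pi(v)$ for all $w, v$, which is immediate.

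Next, with $b_j := \pi(\bm{b}_j)$ and $c_j := \pi(\bm{c}_j)$, I need to verify the two conditions in the definition of a floor plan: that $\nu_1, \dots, \nu_\ell$ are connected abstract skew shapes (which is part of the data of the module arising from $(\nu, \bm{b}, \bm{c})$), and that for $i \neq j$ the projected shifted shapes are pairwise disjoint. For the disjointness, I would rewrite
\[
\pi(\nu_i) + b_i = \pi(\nu_i) + \pi(\bm{b}_i) = \pi(\nu_i + \bm{b}_i)
\]
using the equalities just established, and likewise for the $c$ side. Then applying \Cref{LEM_PROJDIS} directly yields that $\pi(\nu_i + \bm{b}_i)$ and $\pi(\nu_j + \bm{b}_j)$ are disjoint (and similarly for $\bm{c}$), which is exactly what is required.

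There is no real obstacle here; the corollary is a packaging statement that converts the geometric disjointness conclusion of \Cref{LEM_PROJDIS} into the formal data of a floor plan. The only mild care needed is to keep straight that $\pi$ is used both as a map $\bbN^3 \to \bbN^2$ and via the implicit embedding $\bbN^2 \hookrightarrow \bbN^3$ introduced just before the lemma, so that the expressions $\pi(\nu_j) + b_j$ and $\nu_j + \bm{b}_j$ live in compatible ambient lattices.
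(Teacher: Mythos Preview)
Your proposal is correct and matches the paper's approach: the paper states this as a corollary with no explicit proof, treating it as immediate from \Cref{LEM_PROJDIS} and the linearity of $\pi$, which is exactly what you unpack.
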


\begin{defn}
    Given a floor plan $(\nu, b, c)$, say a module arising from $(\nu, \bm{b}, \bm{c})$ is a realization of the floor plan if $b_j = \pi(\bm{b}_j)$ and $c_j = \pi(\bm{c}_j)$ for $j \in [\ell]$.
\end{defn}

Multiple scaffolded combinatorial modules may be realizations of the same floor plan. However, a priori it is not clear that every floor plan is realized by a scaffolded combinatorial module.

\begin{exa}
    Take $\nu$ as given in \Cref{EXA_LPLUSS}. Then, it is possible for $b = ((0, 0), (0, 2))$ to be part of a valid floor plan, but not for $b = ((0, 0), (0, 0))$.
    \begin{center}
        \begin{tikzpicture}[caption=${b = ((0, 0), (0, 2))}$]
            \axes{4}{4}{4}
            \planepartition[1]{{1}}{lightgray}
            \planepartition{{0,2}}{lightgray}
            \planepartition{{},{},{1}}{darkgray}
        \end{tikzpicture}
        \begin{tikzpicture}[caption=${b = ((0, 0), (0, 0))}$]
            \axes{4}{4}{4}
            \planepartition{{1}}{darkgray}
            \planepartition[1]{{1}}{lightgray}
            \planepartition{{0,2}}{lightgray}
        \end{tikzpicture}
    \end{center}
    \Cref{EXA_LPLUSS} and \Cref{EXA_LPLUSS2} show the left sides for possible realizations of a floor plan with $b = ((0, 0), (0, 2))$.
\end{exa}

\begin{defn}
    Recall that any skew shape $\sigma \subseteq \bbN^3$ can be written as $\overline{\sigma} \smallsetminus \underline{\sigma}$ for $\overline{\sigma}, \underline{\sigma}$ standard shapes. Define the upper height function of $\sigma$ to be $\overline{p} : \bbN^2 \to \bbN$, $(a_1, a_2) \mapsto |\{a_3 \geq 0 : (a_1, a_2, a_3) \in \overline{\sigma}\}|$. Similarly, define the lower height function of $\sigma$ to be $\underline{p} : \bbN^2 \to \bbN$, $(a_1, a_2) \mapsto |\{a_3 \geq 0 : (a_1, a_2, a_3) \in \underline{\sigma}\}|$.
\end{defn}

\begin{rem}\label{LEM_PPROPS}
    There is a bijection between abstract skew shapes $\sigma \subseteq \bbN^3$ and pairs of functions $(\overline{p}, \underline{p})$ from $\bbN^2$ to $\bbN$ satisfying:
    \begin{enumerate}[(i)]
        \item $\overline{p} \neq 0$
        \item $\overline{p}$ has finite support
        \item $\overline{p} \geq \underline{p}$
        \item $\overline{p}$ and $\underline{p}$ are nonincreasing
        \item $\overline{p}(a) = \underline{p}(a) > 0 \implies \exists a' > a, \overline{p}(a') = \overline{p}(a) > \underline{p}(a')$
    \end{enumerate}
    Moreover, allowing $\overline{p} = 0$ yields a bijection between the collection of abstract skew shapes in $\bbN^3$ along with $\varnothing$ and pairs of functions from $\bbN^2$ to $\bbN$ satisfying the remaining four properties.
\end{rem}

Given a floor plan $(\nu, b, c)$, we will write $\overline{p}_1, \cdots, \overline{p}_\ell$ and $\underline{p}_1, \cdots, \underline{p}_\ell$ for the upper and lower height functions of $\nu_1, \cdots, \nu_\ell$ respectively.

\begin{rem}\label{LEM_USEPS}
    Let $\sigma \subseteq \bbN^3$ be an abstract skew shape with height functions $\overline{p}, \underline{p}$. Let $\bm{a} = (a_1, a_2, a_3) \in \bbN^3$ and let $a = (a_1, a_2)$. Then, $\bm{a} \in \sigma$ if and only if $\underline{p}(a) \leq a_3 < \overline{p}(a)$.
\end{rem}


\begin{defn}\label{DEF_main_partial_order}
    Given a floor plan $(\nu, b, c)$, we define a relation $\leq_b$ on $\nu$ as follows: for $i, j \in [\ell]$, say that $\nu_i \leq_b \nu_{j}$ if there exist $v \in \pi(\nu_i) + b_i$ and $w \in \pi(\nu_{j}) + b_{j}$ for which $v \leq w$. Define $\leq_c$ analogously.
\end{defn}

\begin{rem}
    Given a floor plan $(\nu, b, c)$, we can make $\leq_b$ and $\leq_c$ into partial orders by taking their transitive closures and proving that the resulting relation is antisymmetric, justifying the choice of notation. We will not do so here. However, we will show that $\leq_b$ and $\leq_c$ are in fact antisymmetric prior to taking transitive closures in the following series of lemmas.
\end{rem}

\begin{defn}
    Given $v_0, \cdots, v_p$ a path in $\bbN^2$, say the path is northeast if for all $j \in [p]$, $v_j - v_{j - 1} \in \{e_1, e_2\}$. Define northwest, southeast, and southwest analogously. We say a path is unidirectional if it is northeast, northwest, southeast, or southwest.
\end{defn}

\begin{rem}\label{REM_PATHANNOYING}
    If $v_0 = (v_{0, 1}, v_{0, 2}), \cdots, v_p = (v_{p, 1}, v_{p, 2}) \in \bbN^2$ is a path, then it passes through every horizontal coordinate between $v_{0, 1}$ and $v_{p, 1}$ and every vertical coordinate between $v_{0, 2}$ and $v_{p, 2}$. Moreover, if it is a northeast path, then $v_{0, 1}, \cdots, v_{p, 1}$ and $v_{0, 2}, \cdots, v_{p, 2}$ are weakly increasing sequences. Analogous statements hold for northwest, southeast, and southwest paths.
\end{rem}

\begin{defn}
    Given $r = (r_1, r_2), s = (s_1, s_2) \in \bbN^2$, by the rectangle defined by $r$ and $s$ we mean the subset $$\{(v_1, v_2) \in \bbN^2: \min\{r_1, s_1\} \leq v_1 \leq \max\{r_1, s_1\}, \min\{r_2, s_2\} \leq v_2 \leq \max\{r_2, s_2\}\}.$$ In particular, when $r \leq s$, the rectangle defined by $r$ and $s$ is $\{v \in \bbN^2 : r \leq v \leq s\}$.
\end{defn}

\begin{lem}\label{LEM_CONTAINRECTS}
    Given a skew shape $\sigma$ and $r, s \in \sigma$, $r \leq s$, we have that $\sigma$ contains the rectangle defined by $r$ and $s$.
\end{lem}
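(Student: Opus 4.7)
The plan is to use the decomposition $\sigma = \overline{\sigma}\smallsetminus\underline{\sigma}$ recalled in the second remark of Section 2, and exploit the fact that both $\overline{\sigma}$ and $\underline{\sigma}$ are standard shapes, i.e., downward closed under $\leq$.

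Concretely, fix $r \leq s$ with $r,s\in\sigma$, and let $v$ be any point of the rectangle defined by $r$ and $s$, so that $r\leq v\leq s$. First I would observe that $s\in\sigma\subseteq\overline{\sigma}$ and $\overline{\sigma}$ is closed under $\leq$, so $v\leq s$ forces $v\in\overline{\sigma}$. Next, I would argue that $v\notin\underline{\sigma}$: indeed, $\underline{\sigma}$ is also a standard shape and hence closed under $\leq$, so if $v\in\underline{\sigma}$ then $r\leq v$ would give $r\in\underline{\sigma}$, contradicting $r\in\sigma=\overline{\sigma}\smallsetminus\underline{\sigma}$. Combining these two observations gives $v\in\overline{\sigma}\smallsetminus\underline{\sigma}=\sigma$, which is exactly the claim.

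There is essentially no obstacle here; the content is purely the downward closure of the two Young diagrams bracketing $\sigma$. The only subtlety is that the decomposition $\sigma=\overline{\sigma}\smallsetminus\underline{\sigma}$ is not unique a priori, but the canonical choice from the earlier remark (with $\overline{\sigma}$ the closure of $\sigma$ under $\leq$ and $\underline{\sigma}$ its complement in $\overline{\sigma}$) suffices, and any valid such decomposition would work in the argument above.
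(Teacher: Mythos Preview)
Your proposal is correct and follows essentially the same argument as the paper: both use the decomposition $\sigma=\overline{\sigma}\smallsetminus\underline{\sigma}$, obtain $v\in\overline{\sigma}$ from $v\leq s$ and downward closure of $\overline{\sigma}$, and rule out $v\in\underline{\sigma}$ by noting that downward closure of $\underline{\sigma}$ would then force $r\in\underline{\sigma}$, contradicting $r\in\sigma$. Your closing remark about the non-uniqueness of the decomposition is extraneous but harmless.
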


\begin{proof}
    Take $r \leq v \leq s$. Clearly, $v \in \overline{\sigma}$. Now, if $v \in \underline{\sigma}$, then $r \in \underline{\sigma}$ as $\underline{\sigma}$ is closed under $\leq$, so $r \not\in \sigma$, a contradiction. Hence, $v \in \overline{\sigma} \smallsetminus \underline{\sigma} = \sigma$.
\end{proof}

\begin{lem}\label{LEM_PATHINRECT}
    Given $\sigma \subseteq \bbN^2$ a connected skew shape and $r = (r_1, r_2), s = (s_1, s_2) \in \sigma$, there exists a unidirectional path from $r$ to $s$ in $\sigma$ within the rectangle defined by $r$ and $s$.
\end{lem}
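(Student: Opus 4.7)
My plan is to swap $r$ and $s$ if necessary to arrange $r_1 \leq s_1$. If additionally $r_2 \leq s_2$, then $r \leq s$ and \Cref{LEM_CONTAINRECTS} places the entire rectangle inside $\sigma$, so any monotone staircase from $r$ to $s$ is a northeast unidirectional path and we are done. The substantive case is $r_1 \leq s_1$ and $r_2 > s_2$, where I produce a southeast path from $r$ to $s$ by induction on $d = (s_1 - r_1) + (r_2 - s_2)$. The degenerate sub-case $r_1 = s_1$ is settled directly: for each $j \in [s_2, r_2]$, downward closure of $\overline{\sigma}$ applied to $(r_1, j) \leq r$ yields $(r_1, j) \in \overline{\sigma}$, while the contrapositive of downward closure of $\underline{\sigma}$ applied to $s = (r_1, s_2) \leq (r_1, j)$ with $s \notin \underline{\sigma}$ yields $(r_1, j) \notin \underline{\sigma}$, so the whole column lies in $\sigma$. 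When instead $r_1 < s_1$ and $r_2 > s_2$, the key claim is that $r + e_1 \in \sigma$ or $r - e_2 \in \sigma$; granted this, applying the induction hypothesis to $(r + e_1, s)$ or $(r - e_2, s)$ (the pair is still in $\sigma$, which is still connected) produces a southeast path in the smaller rectangle, and prepending the east or south step then gives the required path within the original rectangle.

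The key claim is where connectedness of $\sigma$ enters. Note that $r + e_1 \notin \underline{\sigma}$, else $r \leq r + e_1$ would force $r \in \underline{\sigma}$, contradicting $r \in \sigma$; dually, $r - e_2 \in \overline{\sigma}$ automatically since $r - e_2 \leq r$. So if both $r + e_1$ and $r - e_2$ fail to be in $\sigma$, then $r + e_1 \notin \overline{\sigma}$ and $r - e_2 \in \underline{\sigma}$. By downward closure of $\overline{\sigma}$ and $\underline{\sigma}$, every point of $\sigma$ with $j \geq r_2$ must have $i \leq r_1$ (else the point would lie above $r+e_1$ and hence outside $\overline{\sigma}$), and every point of $\sigma$ with $j < r_2$ must have $i > r_1$ (else the point would lie below $r-e_2$ and hence inside $\underline{\sigma}$). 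Thus $\sigma$ partitions disjointly into a top piece (rows $\geq r_2$, columns $\leq r_1$) containing $r$ and a bottom piece (rows $< r_2$, columns $> r_1$) containing $s$. No edge of grid adjacency crosses this partition: horizontal edges stay in a single row, while the only potentially crossing vertical edge, between rows $r_2 - 1$ and $r_2$, would require a single column to satisfy both $i \leq r_1$ and $i > r_1$. This contradicts $\sigma$ being connected, establishing the claim. The main obstacle is precisely this disconnection argument; setting up the top/bottom partition and checking that no grid adjacency can cross it is the technical heart, while the rest of the proof is routine induction bookkeeping.
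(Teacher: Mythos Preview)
Your proof is correct and takes a genuinely different route from the paper's. The paper treats the case $r\leq s$ and claims the remaining three orientations are symmetric: it takes a \emph{minimal-length} path from $r$ to $s$ in $\sigma$, which exists by connectedness, and argues by induction on that length that such a path already lies inside the rectangle and is unidirectional, invoking \Cref{LEM_CONTAINRECTS} only to build vertical or horizontal shortcuts whenever the path strays outside. You instead dispatch the comparable case $r\leq s$ in one line via \Cref{LEM_CONTAINRECTS} (the entire rectangle sits in $\sigma$) and concentrate on the genuinely nontrivial incomparable case, where your separation argument shows that if neither the east step nor the south step from $r$ is available, then $\sigma$ splits into two grid-components with $r$ and $s$ on opposite sides---contradicting connectedness---so one of the two steps is always available and the induction on taxicab distance goes through. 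Your approach pinpoints exactly where connectedness is needed and is more explicitly constructive; the paper's approach is more uniform across the four orientations but does more work than necessary in the comparable case. One small wrinkle: when the step $r\mapsto r-e_2$ lands at $r'_2=s_2$ you have exited the ``substantive case'' $r_2>s_2$, so strictly speaking you should appeal to the easy comparable case rather than the induction hypothesis; this is repaired by phrasing the induction over all pairs with $r_1\leq s_1$ and $r_2\geq s_2$.
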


\begin{proof}
    Assume that $r_1 \leq s_1$ and $r_2 \leq s_2$. The other three cases are proven symmetrically. As $\sigma$ is connected, by definition there is a path of minimal length $r = v_0, \cdots, v_p = s$ from $r$ to $s$ in $\sigma$. We induct on $p$.

    If $p = 0$, then $r = s$ and we are done. Inductively, suppose that $p > 0$ and that for all $r'$ and $s'$ in $\sigma$, if $r_1' \leq s_1'$ and $r_2' \leq s_2'$ and there exists a path of minimal length from $r'$ to $s'$ in $\sigma$ of length less than $p$, then that path is within the rectangle defined by $r'$ and $s'$. Now, suppose that $j \in [p]$ is the first index at which $v_j$ is not within the rectangle defined by $r$ and $s$. If $j > 1$, then the path $v_{j - 1}, \cdots, v_p$ from $v_{j - 1}$ to $v_p$ in $\sigma$ is of length $p - (j - 1) < p$. Consider a path $v_{j - 1} = w_0, \cdots, w_q = v_p$ from $v_{j - 1}$ to $v_p$ in $\sigma$ of minimal length $q$. Then, $q \leq p - (j - 1)$, so by induction this path is within the rectangle defined by $v_{j - 1}$ and $v_p = s$, and hence within the rectangle defined by $r$ and $s$. Then, $v_0, \cdots, v_{j - 2}, w_0, \cdots, w_q$ is a path from $r$ to $s$ in $\sigma$ within the rectangle defined by $r$ and $s$ of length $j - 2 + 1 + q \leq p$.

    So, suppose $j = 1$. Then, as $v_1$ is not within the rectangle defined by $v_0 = r$ and $s$, without loss of generality we may assume that $v_1 - v_0 = -e_1$. In other words, assume $v_1 = (r_1 - 1, r_2)$. As $v_p = s$ and $s_1 > r_1 - 1$, by \Cref{REM_PATHANNOYING}, the path from $v_1$ to $s$ must pass through the horizontal coordinate $r_1$ again, and so there exists $j > 1$ for which $v_j = (r_1, v_{j, 2})$ for some natural number $v_{j, 2}$. But evidently the straight line path $v_0$ to $v_j$ holding the horizontal coordinate $r_1$ constant and varying the vertical constant from $r_2$ to $v_{j, 2}$, contained in $\sigma$ by \Cref{LEM_CONTAINRECTS}, must have strictly smaller length than $v_0, \cdots, v_j$, a contradiction.

    Thus, we have that a path of minimal length from $r$ to $s$ in $\sigma$ is within the rectangle defined by $r$ and $s$. A similar argument to the above shows that the path must in fact be northeast, and we are done.
\end{proof}

\begin{lem}\label{LEM_PATHSCROSS}
    Let $r = (r_1, r_2), s = (s_1, s_2) \in \bbN^2$, $r \leq s$ and let $R$ be the rectangle defined by $r$ and $s$. Let $v_0 = (v_{0, 1}, v_{0, 2}), \cdots, v_p = (v_{p, 1}, v_{p, 2}) \in R$ be a unidirectional path with $v_{0, 1} = r_1$, $v_{p, 1} = s_1$; that is, a unidirectional path from the left edge of $R$ to the right edge of $R$. Let $w_0 = (w_{0, 1}, w_{0, 2}), \cdots, w_q = (w_{q, 1}, w_{q, 2}) \in R$ be a path with $w_{0, 2} = r_2$, $w_{q, 2} = s_2$; that is, a path from the bottom edge of $R$ to the top edge of $R$. Then, $$\{v_0, \cdots, v_p\} \cap \{w_0, \cdots, w_q\} \neq \varnothing.$$
\end{lem}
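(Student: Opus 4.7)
The plan is a discrete Jordan-curve argument: path~1 cuts the rectangle $R$ into an ``above'' region $A$, a ``below'' region $B$, and path~1 itself; I show that no grid-edge of $R$ has one endpoint in $A$ and one in $B$, so any lattice path in $R$ from $B$ to $A$ must visit a vertex of path~1. I then show that the endpoints $w_0, w_q$ of path~2 lie in $B$ and $A$ respectively (or already on path~1), and the conclusion follows.

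First, since path~1 is unidirectional with $v_{0,1} = r_1 \le s_1 = v_{p,1}$, its first-coordinate sequence is weakly monotone and weakly increasing, so it must be northeast or southeast; after a vertical reflection of $R$, I may assume it is northeast. Then for each column $a_1 \in [r_1, s_1]$, the set of indices $i$ with $v_{i,1} = a_1$ is a contiguous block of the path along which only $+e_2$ steps occur, so path~1 meets column $a_1$ in a contiguous vertical segment $\{a_1\} \times [y_1(a_1), y_2(a_1)]$, with $y_1(a_1), y_2(a_1) \in [r_2, s_2]$ since path~1 lies in $R$. The transition $+e_1$ step out of column $a_1$ forces the key identity $y_1(a_1+1) = y_2(a_1)$.

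Defining
\[
A = \{(a_1, a_2) \in R : a_2 > y_2(a_1)\}, \qquad B = \{(a_1, a_2) \in R : a_2 < y_1(a_1)\},
\]
so that $R$ decomposes as the disjoint union of $A$, $B$, and path~1, I check that no grid-edge of $R$ joins $A$ and $B$. A vertical edge between $(a_1, a_2)$ and $(a_1, a_2+1)$ with one endpoint in $A$ and the other in $B$ would force $y_2(a_1) < y_1(a_1)$, impossible. For a horizontal edge between $(a_1, a_2)$ and $(a_1+1, a_2)$, invoking $y_2(a_1) = y_1(a_1+1)$: if $(a_1, a_2) \in B$, then $a_2 < y_1(a_1) \le y_2(a_1) = y_1(a_1+1) \le y_2(a_1+1)$, so $(a_1+1, a_2) \notin A$; the reverse direction is symmetric. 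Consequently any lattice path in $R$ from a point of $B$ to a point of $A$ must pass through a vertex of path~1.

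To finish, note $w_{0,2} = r_2 \le y_1(w_{0,1})$, so $w_0$ is either on path~1 (and we are done) or in $B$; symmetrically $w_q$ is on path~1 or in $A$. In the remaining case $w_0 \in B$ and $w_q \in A$, and since consecutive vertices of path~2 are grid-adjacent and all lie in $R$, the edge-exclusion argument forces path~2 to meet path~1. The main obstacle is the horizontal-edge case of the adjacency check, which is precisely where the structural identity $y_1(a_1+1) = y_2(a_1)$---and thus the unidirectionality of path~1---is used; the asymmetry between the hypotheses on the two paths (path~1 unidirectional, path~2 arbitrary) is exactly what this identity buys us.
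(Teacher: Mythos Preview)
Your proof is correct and takes a genuinely different route from the paper's. The paper argues by contradiction and induction along path~2: assuming the paths do not meet, it maintains the invariant ``whenever $v_{i,1}=w_{j,1}$, one has $v_{i,2}>w_{j,2}$'' as $j$ increases from $0$ to $q$, checking each of the four possible step directions $w_j-w_{j-1}\in\{\pm e_1,\pm e_2\}$ by hand; at $j=q$ this forces some $v_i$ with second coordinate exceeding $s_2$, contradicting $v_i\in R$. Your argument instead extracts from the northeast structure of path~1 the column functions $y_1,y_2$ and the transition identity $y_1(a_1+1)=y_2(a_1)$, then partitions $R$ into an above region $A$, a below region $B$, and path~1 itself, and checks once and for all that no grid edge joins $A$ to $B$; the conclusion follows by a standard connectedness step. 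Your approach is more geometric and avoids the four-case analysis; the paper's induction is more elementary but correspondingly more laborious. Both hinge on the same structural fact---that a northeast path meets each column in a contiguous segment whose top matches the bottom of the next column---which is explicit in your identity $y_1(a_1+1)=y_2(a_1)$ and implicit in the paper's repeated use of monotonicity.
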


\begin{proof}
    Suppose the two paths do not intersect. Without loss of generality, suppose $v_0, \cdots, v_p$ is northeast. For all $0 \leq i \leq p$, if $v_{i, 1} = w_{0, 1}$, then $v_{i, 2} > w_{0, 2}$, as $v_{i, 2} \geq w_{0, 2} = r_2$ by construction and the paths would intersect if we had equality.

    Inductively, suppose that for some $j \in [q]$, the following property held: for all $0 \leq i \leq p$, if $v_{i, 1} = w_{j - 1, 1}$, then $v_{i, 2} > w_{j - 1, 2}$. Consider first the case where $w_j - w_{j - 1} = e_2$. If $v_{i, 1} = w_{j, 1}$, then $v_{i, 1} = w_{j - 1, 1}$, so $v_{i, 2} \geq w_{j - 1, 2} + 1 = w_{j, 2}$, where the paths would intersect if we had equality, and so $v_{i, 2} > w_{j, 2}$. Next, the case where $w_j - w_{j - 1} = -e_2$ is trivial.

    Now, if $w_j - w_{j - 1} = e_1$ and for some $0 \leq i \leq p$, $v_{i, 1} = w_{j, 1}$ and $v_{i, 2} \leq w_{j, 2}$, then $i > 0$, as $v_{i, 1} = w_{j - 1, 1} + 1 \geq r_1 + 1 > v_{0, 1}$. By \Cref{REM_PATHANNOYING}, there is some $0 \leq i' < i$ with $v_{i', 1} = w_{j - 1, 1}$, so by the inductive hypothesis, $v_{i', 2} > w_{j - 1, 2}$. But, as $v_0, \cdots, v_p$ is northeast, $v_{i, 2} \geq v_{i', 2} > w_{j - 1, 2} = w_{j, 2}$, a contradiction.

    Finally, if $w_j - w_{j - 1} = -e_1$ and for some $0 \leq i \leq p$, $v_{i, 1} = w_{j, 1}$ and $v_{i, 2} \leq w_{j, 2}$, then in a similar fashion we conclude that $i < p$ and that there is some $i < i' \leq p$ with $v_{i', 1} = w_{j - 1, 1}$. Take the least such $i'$. Then, by \Cref{REM_PATHANNOYING}, as $v_0, \cdots, v_p$ is northeast, $v_{i' - 1, 1} \leq v_{i', 1}$, where the inequality is strict by minimality of $i'$, so $v_{i' - 1, 1} = v_{i', 1} - 1 = w_{j, 1}$. In other words, $v_{i'} - v_{i' - 1} = e_1$. It follows that $v_{i' - 1, 2} = v_{i', 2} > w_{j - 1, 2} = w_{j, 2}$. Hence, $v_i$ and $v_{i' - 1}$ are in the same column as $w_j$, below and above respectively, and so being a northeast path, $v_0, \cdots, v_p$ must contain $w_j$, a contradiction.
\end{proof}

\begin{lem}
    Given a floor plan $(\nu, b, c)$, for $i, j \in [\ell]$, if $\nu_i \leq_b \nu_j$ and $\nu_j \leq_b \nu_i$, then $i = j$. The analogous result holds for $\leq_c$.
\end{lem}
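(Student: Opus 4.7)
The plan is to argue by contradiction, combining \Cref{LEM_CONTAINRECTS}, \Cref{LEM_PATHINRECT}, and \Cref{LEM_PATHSCROSS} on skew shapes in $\bbN^2$; the case of $\le_c$ is identical. Suppose $\nu_i \le_b \nu_j$ and $\nu_j \le_b \nu_i$ with $i \ne j$. Set $\sigma_i = \pi(\nu_i) + b_i$ and $\sigma_j = \pi(\nu_j) + b_j$; these are disjoint connected skew shapes in $\bbN^2$ (the floor plan condition gives disjointness, $\pi$ preserves skew shapes by the observation preceding \Cref{LEM_PROJDIS}, and connectivity transfers since a step along $\pm e_3$ projects to a trivial step). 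The hypothesis yields four points $v, w' \in \sigma_i$ and $v', w \in \sigma_j$ with $v \le w$ and $v' \le w'$; we aim to contradict $\sigma_i \cap \sigma_j = \varnothing$.

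The easy cases occur when both pairs $(v, w')$ and $(v', w)$ are comparable in their respective shapes. Then \Cref{LEM_CONTAINRECTS} fills in rectangles $R_i \subseteq \sigma_i$ with corners $v, w'$, and $R_j \subseteq \sigma_j$ with corners $v', w$. Using the given inequalities $v \le w$ and $v' \le w'$, a direct check shows that $R_i \cap R_j$ is typically a nonempty rectangle contained in $\sigma_i \cap \sigma_j$, yielding a contradiction; in the sole exceptional sub-case ($w' \le v$ and $w \le v'$), the chain $v \le w \le v' \le w' \le v$ forces $v = v'$, again contradicting disjointness.

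In the remaining configurations, at least one of the pairs $(v, w')$ or $(v', w)$ is incomparable in the NW/SE sense. I apply \Cref{LEM_PATHINRECT} to produce unidirectional paths $P_i \subseteq \sigma_i$ from $v$ to $w'$ inside the rectangle defined by them, and $P_j \subseteq \sigma_j$ from $v'$ to $w$ inside the rectangle defined by them. The approach is to identify a rectangle $R^*$, contained in the intersection of the two path-containing rectangles, whose edges are pinned by the ``middle'' coordinates among the four points, and argue that the restriction of one path to $R^*$ crosses it unidirectionally from left edge to right edge while the other crosses from bottom edge to top edge. \Cref{LEM_PATHSCROSS} then produces a common point of the two paths, contradicting disjointness of $\sigma_i$ and $\sigma_j$.

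The main obstacle is ensuring the path restrictions actually meet the required opposite edges of $R^*$; a priori, $P_j$ could detour around $R^*$ without entering at the intended edge. Here the skew shape structure of $\sigma_j$ is essential: if $P_j$ met a column $c = w'_1$ at a row $r > w'_2$, then $(w'_1, r) \in \sigma_j$ would place $w' = (w'_1, w'_2)$ into $\overline{\sigma_j}$ by downward closure of $\overline{\sigma_j}$; since $w' \in \sigma_i$ is disjoint from $\sigma_j$, this forces $w' \in \underline{\sigma_j}$, and downward closure of $\underline{\sigma_j}$ then forces $v' \in \underline{\sigma_j}$, contradicting $v' \in \sigma_j$. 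A symmetric argument, using \Cref{LEM_CONTAINRECTS} inside $\sigma_j$ applied to $v'$ and a point of the form $(v_1, w'_2)$ to trap $w'$ inside $\sigma_j$, pins the opposite edge of $P_j$, completing the \Cref{LEM_PATHSCROSS} application.
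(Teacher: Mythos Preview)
Your overall strategy---dispose of the comparable configurations via \Cref{LEM_CONTAINRECTS}, then in the remaining case produce unidirectional paths with \Cref{LEM_PATHINRECT} and force a crossing with \Cref{LEM_PATHSCROSS}---is the same as the paper's, and your treatment of the easy cases is fine.

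The hard case has a genuine gap. You never specify $R^*$, and the plan to ``restrict'' each path to $R^*$ is not sound as written: even for a unidirectional path, the portion lying inside a sub-rectangle need not run from one full edge to the opposite one, which is precisely the hypothesis \Cref{LEM_PATHSCROSS} requires. Your pinning argument handles only one of the four edge constraints (that $P_j$ cannot meet column $w'_1$ above row $w'_2$), and the deferred ``symmetric argument'' is not actually symmetric in the data---there is no reason the auxiliary point $(v_1, w'_2)$ should lie in $\sigma_j$, so the \Cref{LEM_CONTAINRECTS} step you invoke there does not apply.

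The paper avoids all of this by working with a \emph{single} rectangle containing all four points. In its labeling ($v, v' \in \sigma_i$, $w, w' \in \sigma_j$, $v \le w$, $w' \le v'$) one first uses \Cref{LEM_CONTAINRECTS} to rule out $w \le w'$ and $v' \le v$, and then checks, splitting on the sign of $w_1 - w'_1$, that among the four points $w, w'$ carry the extremal second coordinates and $v, v'$ the extremal first coordinates. These extremes define a rectangle $R$ with all four points inside it; the unidirectional path $v \to v'$ in $\sigma_i$ then automatically runs from the left edge of $R$ to the right, and the path $w' \to w$ in $\sigma_j$ from the bottom edge to the top, so \Cref{LEM_PATHSCROSS} applies directly with no restriction step needed.
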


\begin{proof}
    Suppose otherwise. Take $i, j \in [\ell], i \neq j$ with $\nu_i <_b \nu_j$ and $\nu_j <_b \nu_i$. Suppose these are inequalities are witnessed by $v, v' \in \pi(\nu_i) + b_i$, $w, w' \in \pi(\nu_j) + b_j$, $v \leq w$, $w' \leq v'$. Note that as $i \neq j$, $\pi(\nu_i) + b_i$ and $\pi(\nu_j) + b_j$ are disjoint, and so $v < w$ and $w' < v'$.

    Now, $w \not\leq w'$, as otherwise $v < w \leq w' < v'$, implying $w, w' \in \pi(\nu_i) + b_i$ \Cref{LEM_CONTAINRECTS}. Similarly, $v' \not\leq v$. Write $w = (w_1, w_2)$, $w' = (w'_1, w'_2)$, $v = (v_1, v_2)$, and $v' = (v'_1, v'_2)$. 
    
    Suppose $w_1 \leq w'_1$. Then, by incomparability, $w_2 > w'_2$. By definition, $w_2 > v_2$. And, $w_2 > v'_2$, as otherwise $v'_2 \geq w_2$ and $v'_1 \geq w'_1 \geq w_1$ would imply that $v' \geq w' > v$, so by \Cref{LEM_CONTAINRECTS}, $w \in \pi(\nu_i) + b_i$, a contradiction. Proceeding similarly, one can easily verify that: \begin{align*}
        w_2 &\geq \max\{v_2, v'_2, w_2, w'_2\}, \\
        w'_2 &\leq \min\{v_2, v'_2, w_2, w'_2\}, \\
        v_1 &\leq \min\{v_1, v'_1, w_1, w'_1\}, \\
        \text{and } v'_1 &\geq \max\{v_1, v'_1, w_1, w'_1\}. \\
    \end{align*} In particular, $v, v', w, w'$ are contained within the rectangle $R$ defined by $r = (v_1, w'_2)$, $s = (v'_1, w_2)$, $r \leq s$.
    
    As $\nu_i$ and $\nu_j$ are connected, so too are $\pi(\nu_i) + b_i$ and $\pi(\nu_j) + b_j$. By \Cref{LEM_PATHINRECT}, there exists a unidirectional path $v = v_0, \cdots, v_p = v'$ from $v$ to $v'$ in $\pi(\nu_i) + b_i$ within the rectangle defined by $v$ and $v'$. Observe that this path is contained in $R$ from the left edge of $R$ to the right edge of $R$. Similarly, there exists a unidirectional path $w' = w_0, \cdots, w_p = w$ from $w'$ to $w$ in $\pi(\nu_j) + b_j$ within the rectangle defined by $w$ and $w'$. Observe too that this path is contained in $R$ from the bottom edge of $R$ to the top edge of $R$. But, by \Cref{LEM_PATHSCROSS}, the paths intersect, which is to say that $\pi(\nu_i) + b_i$ and $\pi(\nu_j) + b_j$ intersect, a contradiction.

    The case where $w_1 \geq w'_1$ is proven symmetrically, and we are done.
\end{proof}

\begin{cor}
    Given a floor plan $(\nu, b, c)$, all chains in $\leq_b$ are finite, where by chain we mean a sequence $\nu_{i_0} <_b \cdots <_b \nu_{i_m}$ for $i_0, \cdots, i_m \in [\ell]$. In particular, there are only finitely many chains in $\leq_b$. The analogous results hold for $\leq_c$.
\end{cor}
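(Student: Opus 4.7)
The plan is to establish the corollary by proving that the relation $<_b$ is acyclic. Granted acyclicity, the indices $i_0, \ldots, i_m$ of any chain $\nu_{i_0} <_b \cdots <_b \nu_{i_m}$ must be pairwise distinct, so each chain has length at most $\ell$, and the total number of chains is bounded by the (finite) number of sequences of distinct elements of $[\ell]$. The analogous result for $\leq_c$ then follows by interchanging $b$ with $c$ throughout.

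I would proceed by strong induction on cycle length. Suppose for contradiction that a cycle $\nu_{i_0} <_b \nu_{i_1} <_b \cdots <_b \nu_{i_m} = \nu_{i_0}$ of minimal length $m \geq 2$ exists. When $m = 2$, the relations $\nu_{i_0} \leq_b \nu_{i_1}$ and $\nu_{i_1} \leq_b \nu_{i_0}$ together contradict the antisymmetry established in the previous lemma (which would force $i_0 = i_1$, contradicting that $<_b$ requires $i_0 \neq i_1$). So I may assume $m \geq 3$, and pick witnesses $v_j \in \pi(\nu_{i_j}) + b_{i_j}$ and $w_j \in \pi(\nu_{i_{j+1}}) + b_{i_{j+1}}$ with $v_j \leq w_j$ and $v_j \neq w_j$ for each $j$.

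For every $j$, the points $w_{j-1}$ and $v_j$ both lie in the connected skew shape $\pi(\nu_{i_j}) + b_{i_j}$, so \Cref{LEM_PATHINRECT} supplies a unidirectional path between them inside the rectangle they span. If this path is northeast for some $j$, i.e.\ $w_{j-1} \leq v_j$, then the chain of inequalities $v_{j-1} \leq w_{j-1} \leq v_j \leq w_j$ yields a direct shortcut $\nu_{i_{j-1}} <_b \nu_{i_{j+1}}$, producing a cycle of length $m-1$ and contradicting the minimality of $m$.

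The main obstacle is the remaining case, in which every connecting unidirectional path in every shape $\pi(\nu_{i_j}) + b_{i_j}$ is southwest, northwest, or southeast but never northeast. Here the ``cyclic loop'' traced out by the NE jumps $v_j \to w_j$ together with the non-NE in-shape paths must close up in a geometrically constrained way. I expect to derive the contradiction by pairing two such non-NE paths (in distinct shapes), together with the NE witness jumps between them, into two longer paths that meet the hypotheses of \Cref{LEM_PATHSCROSS}. The forced crossing of those paths would then violate the disjointness of the projections provided by \Cref{LEM_PROJDIS}. The combinatorial bookkeeping of the rectangles, path orientations, and case analysis extending the two-shape argument used in the preceding lemma is the delicate part and constitutes the principal hurdle to overcome.
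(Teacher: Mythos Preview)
The paper offers no proof, treating the statement as an immediate corollary of the preceding antisymmetry lemma. The most direct reading is that the paper takes ``chain'' in the usual poset sense (a sequence with pairwise distinct terms), whereupon the number of chains is trivially bounded by the number of injective sequences from $[\ell]$, and the lemma is invoked only to justify writing the strict symbol $<_b$. Under that reading your argument is considerably more elaborate than necessary.

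That said, the more careful reading you adopt --- that one must show the indices in any $<_b$-chain are \emph{automatically} distinct, i.e.\ that $<_b$ has no cycles of any length --- is legitimate, and the preceding lemma only excludes $2$-cycles (it gives $i_r\neq i_{r+2}$ but says nothing about $i_r=i_{r+3}$). Your northeast-shortcut reduction is correct and does shorten a minimal cycle. The residual case, where every in-shape connection $w_{j-1}\to v_j$ fails to be northeast, is where the real content lies, and you leave it as a sketch. It is not clear how to reduce a general $m$-cycle to a single application of \Cref{LEM_PATHSCROSS}, which treats only two paths inside one rectangle; your plan to ``pair two such non-NE paths'' into a crossing configuration needs a concrete choice of which segments to glue and which rectangle to work in, and you have not supplied one. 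As written, the proposal is incomplete at precisely the step that goes beyond what the paper's lemma already gives.
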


\begin{defn}
    Given a floor plan $(\nu, b, c)$ and $i, j \in [\ell]$ with $\nu_i <_b \nu_j$, define the $b$-height of $\nu_i$ with respect to $\nu_j$ to be $$h_b(i, j) = \max_{\substack{v \in \pi(\nu_i) + b_i \\ w \in \pi(\nu_{j}) + b_j \\ v \leq w}} (\overline{p}_j(w - b_j) - \underline{p}_i(v - b_i)).$$ Define $h_c$ analogously.
\end{defn}

\begin{rem}
    Given a floor plan $(\nu, b, c)$ and $i, j \in [\ell]$ with $\nu_i <_b \nu_j$, we will see that if $(\nu, \bm{b}, \bm{c})$ is a realization of $(\nu, b, c)$, then $\bm{b}_{i, 3} - \bm{b}_{j, 3} \geq h_b(i, j)$.
\end{rem}

\begin{defn}
    Given a floor plan $(\nu, b, c)$, for $j \in [\ell]$, define the $b$-height of $\nu_j$ to be $$h_b(j) = \max_{\substack{i_0, \cdots, i_m \in [\ell] \\ \nu_{i_0} <_b \cdots <_b \nu_{i_m} \\ i_0 = j}}\sum_{r = 1}^m h_b(i_{r - 1}, i_r).$$ Let $\bm{b}_j \in \bbN^3$ be given by $\bm{b}_j = (b_j, h_b(j))$. Define $h_c$ and $\bm{c}$ analogously. Call $(\nu, \bm{b}, \bm{c})$ the canonical realization of $(\nu, b, c)$.
\end{defn}

\begin{exa}
    Suppose $\nu$ is given by the following:
    \begin{center}
        \begin{tikzpicture}[caption=$\nu_1$]
            \axes{3}{3}{4}
            \planepartition[2]{{1}}{lightgray}
            \planepartition{{0,3}}{lightgray}
        \end{tikzpicture}
        \begin{tikzpicture}[caption=$\nu_2$]
            \axes{3}{3}{4}
            \planepartition{{1}}{gray}
        \end{tikzpicture}
        \begin{tikzpicture}[caption=$\nu_3$]
            \axes{3}{3}{4}
            \planepartition{{1}}{darkgray}
        \end{tikzpicture}
    \end{center}
    Then, $b = ((0, 0), (0, 2), (0, 3))$ appears as follows:
    \begin{center}
        \begin{tikzpicture}[caption=${b = ((0, 0), (0, 2), (0, 3))}$]
            \axes{5}{5}{4}
            \planepartition[2]{{1}}{lightgray}
            \planepartition{{0,3}}{lightgray}
            \planepartition{{},{},{1}}{gray}
            \planepartition{{},{},{},{1}}{darkgray}
        \end{tikzpicture}
    \end{center}
    We see that $\nu_1 <_b \nu_2 <_b \nu_3$. Now, $h_b(2, 3) = 1$ is witnessed by $(0, 2) \in \pi(\nu_2) + b_2$, $(0, 3) \in \pi(\nu_3) + b_3$, $(0, 2) \leq (0, 3)$, $$\overline{p}_3((0, 3) - b_3) - \underline{p}_2((0, 2) - b_2) = \overline{p}_3(0, 0) - \underline{p}_2(0, 0) = 1 - 0 = 1.$$ However, observe $h_b(1, 2) = -1$: indeed, the only $v \in \pi(\nu_1) + b_1$ and $w \in \pi(\nu_2) + b_2$ with $v \leq w$ is $v = (0, 0)$ and $w = (0, 2)$, and we verify that $$\overline{p}_2((0, 2) - b_2) - \underline{p}_1((0, 0) - b_1) = \overline{p}_2(0, 0) - \underline{p}_1(0, 0) = 1 - 2 = -1.$$ 

    Now, $h_b(3) = 0$, as the only increasing chain in $\leq_b$ starting from $\nu_3$ is the trivial chain $\nu_3$, which results in the empty sum. $h_b(2) = 1$ is witnessed by the chain $\nu_1 <_b \nu_2$, and finally $h_b(1) = 0$ can be witnessed by either the trivial chain $\nu_1$ yielding the empty sum or $\nu_1 <_b \nu_2 <_b \nu_3$ yielding $h_b(1, 2) + h_b(2, 3) = -1 + 1 = 0$.

    Defining $\bm{b}_j = (b_j, h_b(j))$ for $1 \leq j \leq 3$ per the definition of the canonical realization, we have $\bm{b} = ((0, 0, 0), (0, 2, 1), (0, 3, 0))$, yielding:
    \begin{center}
        \begin{tikzpicture}[caption=$\lambda_{(\nu, \bm{b})}$]
            \axes{5}{5}{4}
            \planepartition{{3,3},{2},{2},{1}}{white}
            \planepartition[2]{{1}}{lightgray}
            \planepartition{{0,3}}{lightgray}
            \planepartition[1]{{},{},{1}}{gray}
            \planepartition{{},{},{},{1}}{darkgray}
        \end{tikzpicture}
    \end{center}
    Visually, it is clear that for this particular choice of $\nu$ and $b$, the $\lambda_{(\nu, \bm{b})}$ we have constructed is the minimum realization.
\end{exa}

\begin{thm}
    Given a floor plan $(\nu, b, c)$, its canonical realization $(\nu, \bm{b}, \bm{c})$ is a realization of the floor plan, and for all other realizations $(\nu, \bm{b}', \bm{c}')$ of the floor plan, $(\nu, \bm{b}, \bm{c}) \leq (\nu, \bm{b}', \bm{c}')$; that is, $\bm{b} \leq \bm{b}'$ and $\bm{c} \leq \bm{c}'$.
\end{thm}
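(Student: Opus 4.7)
The plan is to split the argument into three pieces: a key inequality that the closure condition imposes on \emph{any} realization, an immediate minimality consequence, and a separate verification that $(\nu, \bm{b}, \bm{c})$ is itself a realization.

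First I would prove the bound announced just before the definition of $h_b(j)$: in any realization $(\nu, \bm{b}', \bm{c}')$ of $(\nu, b, c)$, whenever $\nu_i <_b \nu_j$ one has $\bm{b}'_{i,3} - \bm{b}'_{j,3} \geq h_b(i,j)$. Fix witnesses $v \in \pi(\nu_i) + b_i$ and $w \in \pi(\nu_j) + b_j$ with $v \leq w$. By \Cref{LEM_USEPS}, the point $P = (v, \underline{p}_i(v - b_i) + \bm{b}'_{i,3})$ lies in $\nu_i + \bm{b}'_i$, and $Q = (w, \overline{p}_j(w - b_j) + \bm{b}'_{j,3} - 1)$ lies in $\nu_j + \bm{b}'_j \subseteq \lambda$. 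Since $\pi(\nu_i) + b_i$ and $\pi(\nu_j) + b_j$ are disjoint, no point with first two coordinates $w$ can lie in $\nu_i + \bm{b}'_i$. The closure-under-$\geq$ hypothesis applied to $P$ therefore forces every $(w, y) \in \lambda$ to satisfy $y < \underline{p}_i(v-b_i) + \bm{b}'_{i,3}$; applying this to $Q$ and maximizing over witnesses gives the claim.

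Minimality then drops out. For any chain $\nu_{j = i_0} <_b \nu_{i_1} <_b \cdots <_b \nu_{i_m}$, telescoping gives $\bm{b}'_{j,3} \geq \bm{b}'_{i_m,3} + \sum_{r=1}^m h_b(i_{r-1}, i_r) \geq \sum_{r=1}^m h_b(i_{r-1}, i_r)$, and maximizing over chains starting at $j$ gives $\bm{b}'_{j,3} \geq h_b(j)$. Combined with $\pi(\bm{b}'_j) = b_j = \pi(\bm{b}_j)$ this yields $\bm{b}' \geq \bm{b}$, and symmetrically $\bm{c}' \geq \bm{c}$.

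Finally I would verify that $(\nu, \bm{b}, \bm{c})$ is itself a realization. Disjointness of the $\nu_j + \bm{b}_j$ is immediate from the floor-plan condition on $\pi$-projections. For closure-under-$\geq$ in $\lambda := \lambda_{(\nu,\bm{b})}$, the arithmetic key is $h_b(i) - h_b(j) \geq h_b(i,j)$ whenever $\nu_i <_b \nu_j$, which holds by prepending $\nu_i <_b \nu_j$ to a maximizing chain from $j$. Given $(u, x) \in \lambda$ with $(u, x) \geq (u_*, x_*) \in \nu_i + \bm{b}_i$, pick a witness $(u_0, x_0) \in \nu_j + \bm{b}_j$ satisfying $(u, x) \leq (u_0, x_0)$. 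If $j \neq i$, then $(u_*, u_0)$ witnesses $\nu_i <_b \nu_j$, and combining $x_* \geq \underline{p}_i(u_* - b_i) + h_b(i)$ with $x_0 \leq \overline{p}_j(u_0 - b_j) + h_b(j) - 1$ and the identity above yields $x \leq x_0 < x_* \leq x$, a contradiction. Hence $j = i$, so $u_*$ and $u_0$ both lie in the connected skew shape $\pi(\nu_i) + b_i$ with $u_* \leq u \leq u_0$; \Cref{LEM_CONTAINRECTS} places $u \in \pi(\nu_i) + b_i$, and the monotonicity properties in \Cref{LEM_PPROPS} translate $x_* \leq x \leq x_0$ into the column bounds required for $(u, x) \in \nu_i + \bm{b}_i$ via \Cref{LEM_USEPS}; the analogous claim for $\bm{c}$ is symmetric. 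I expect the hard part to be precisely this closure check, as the bookkeeping needed to exclude alien witnesses $j \neq i$ is exactly why $h_b(j)$ is defined as a maximum over chains rather than a single-step quantity.
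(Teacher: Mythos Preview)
Your three-step outline matches the paper's proof essentially verbatim: first the lower bound $\bm{b}'_{i,3}-\bm{b}'_{j,3}\geq h_b(i,j)$ via the closure and disjointness conditions, then telescoping along chains to get $\bm{b}'_{j,3}\geq h_b(j)$, and finally the closure check for the canonical realization using $h_b(i)-h_b(j)\geq h_b(i,j)$. The only cosmetic difference is that in the closure step the paper dismisses the case $j=i$ in one line (implicitly using \Cref{LEM_CONTAINRECTS} directly in $\bbN^3$), whereas you treat it explicitly via projections and monotonicity of $\overline p_i,\underline p_i$; both are fine.
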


\begin{proof}
    Suppose $(\nu, \bm{b}', \bm{c}')$ gives rise to a module realizing $(\nu, b, c)$. Suppose too that we have $i, j \in [\ell]$, $\nu_i <_b \nu_j$. We claim first that $\bm{b}'_{i, 3} - \bm{b}'_{j, 3} \geq h_b(i, j)$. Otherwise, if $\bm{b}'_{i, 3} - \bm{b}'_{j, 3} < h_b(i, j)$, then by definition of $h_b(i, j)$, there exist $v \in \pi(\nu_i) + b_i$ and $w \in \pi(\nu_j) + b_j$ with $v \leq w$ for which $$\bm{b}'_{i, 3} - \bm{b}'_{j, 3} < \overline{p}_j(w - b_j) - \underline{p}_i(v - b_i).$$ Now, $v - b_i \in \pi(\nu_i)$, which implies $\overline{p}_i(v - b_i) > \underline{p}_i(v - b_i)$, meaning that by \Cref{LEM_USEPS}, $$(v - b_i, \underline{p}_i(v - b_i)) \in \nu_i.$$ Likewise, $w - b_j \in \pi(\nu_j)$, which means that $\overline{p}_j(w - b_j) > \underline{p}_j(w - b_j) \geq 0$, so $$(w - b_j, \overline{p}_i(w - b_j) - 1) \in \nu_j.$$ Shifting back, we have that \begin{align*}
        (v - b_i, \underline{p}_i(v - b_i)) + \bm{b}'_i &= (v - b_i, \underline{p}_i(v - b_i)) + (b_i, \bm{b}'_{i, 3}) \\
        &= (v, \underline{p}_i(v - b_i) + \bm{b}'_{i, 3}) \\
        &\in \nu_i + \bm{b}'_i
    \end{align*} and similarly $(w, \overline{p}_j(w - b_j) + \bm{b}'_{j, 3} - 1) \in \nu_j + \bm{b}'_j$. Note that $w \geq v$ and $$\overline{p}_j(w - b_j) + \bm{b}'_{j, 3} - 1 \geq \underline{p}_i(v - b_i) + \bm{b}'_{i, 3}.$$ With $\lambda' = \lambda_{(\nu, \bm{b}')}$, recall that as $(\nu, \bm{b}', \bm{c}')$ gives rise to a module, $\nu_i + \bm{b}'_i$ is closed under $\geq$ in $\lambda'$. However, this implies that $\nu_i + \bm{b}'_i$ and $\nu_j + \bm{b}'_j$ are not disjoint, a contradiction, and hence we have our claim.
    
    Next, we claim that for all $j \in [\ell]$, $\bm{b}_j \leq \bm{b}'_j$. Indeed, suppose $\bm{b}'_j < \bm{b}_j$. As $\pi(\bm{b}_j) = b_j = \pi(\bm{b}'_j)$, the difference must be in the third coordinate, which is to say that $\bm{b}'_{j, 3} < \bm{b}_{j, 3} = h_b(j)$. By definition of $h_b(j)$, it follows that either $h_b(j) = 0$, in which case $\bm{b}'_{j, 3} < 0$ contradicts $(\nu, \bm{b}', \bm{c}')$ giving rise to a module, or there is some chain $i_0, \cdots, i_m \in [\ell]$, $\nu_{i_0} <_b \cdots <_b \nu_{i_m}$, $i_0 = j$ in $\leq_b$ starting at $\nu_j$ for which $$\bm{b}'_{j, 3} < \sum_{r = 1}^m h_b(i_{r - 1}, i_r).$$ By the first claim, we then have that $$\bm{b}'_{j, 3} < \sum_{r = 1}^m (\bm{b}'_{i_{r - 1}, 3} - \bm{b}'_{i_r, 3}) = \bm{b}'_{i_0, 3} - \bm{b}'_{i_m, 3}.$$ This yields $\bm{b}'_{i_m, 3} < 0$, a contradiction again.

    Repeating the above two claims symmetrically yields $\bm{c} \leq \bm{c}'$, and so we have that $(\nu, \bm{b}, \bm{c}) \leq (\nu, \bm{b}', \bm{c}')$, for all realizations $(\nu, \bm{b}', \bm{c}')$, as desired. It remains to show that $(\nu, \bm{b}, \bm{c})$ gives rise to a module.

    Let $\lambda$ be the closure of $(\nu_1 + \bm{b}_1) \cup \cdots \cup (\nu_\ell + \bm{b}_\ell)$ under $\leq$. As $\pi(\nu_1) + b_1,  \cdots, \allowbreak\pi(\nu_\ell) + b_\ell$ are disjoint, so too are $\nu_1 + \bm{b}_1, \cdots, \nu_\ell + \bm{b}_\ell$ the $\nu_i + b_i$ for $i \in [\ell]$ are each closed under $\geq$ in $\lambda$. Suppose otherwise. Then, there is some $i \in [\ell]$ with $\bm{v} \in \nu_i + \bm{b}_i$, $\bm{u} \in \lambda \smallsetminus (\nu_i + \bm{b}_i)$, $\bm{u} \geq \bm{v}$. Now, by construction of $\lambda$, $\bm{u}$ must come from the closure of some $\nu_j + \bm{b}_j$ under $\leq$ for $j \in [\ell]$, $j \neq i$. Then, there is $\bm{w} \in \nu_j + \bm{b}_j$, $\bm{w} \geq \bm{u} \geq \bm{v}$. Let $v = \pi(\bm{v})$ and $w = \pi(\bm{w})$. Observe that $v \leq w$ witnesses $\nu_i <_b \nu_j$, and that \begin{align*}
        h_b(i) - h_b(j) &= \max_{\substack{i_0, \cdots, i_m \in [\ell] \\ \nu_{i_0} <_b \cdots <_b \nu_{i_m} \\ i_0 = i}}\sum_{r = 1}^m h_b(i_{r - 1}, i_r) - \max_{\substack{i_0, \cdots, i_m \in [\ell] \\ \nu_{i_0} <_b \cdots <_b \nu_{i_m} \\ i_0 = j}}\sum_{r = 1}^m h_b(i_{r - 1}, i_r) \\
        &\geq \max_{\substack{i_0, i_1, \cdots, i_m \in [\ell] \\ \nu_{i_0} <_b \nu_{i_1} <_b \cdots <_b \nu_{i_m} \\ i_0 = i, i_1 = j}}\sum_{r = 1}^m h_b(i_{r - 1}, i_r) - \max_{\substack{i_0, \cdots, i_m \in [\ell] \\ \nu_{i_0} <_b \cdots <_b \nu_{i_m} \\ i_0 = j}}\sum_{r = 1}^m h_b(i_{r - 1}, i_r),
     \tag*{as every chain starting at $\nu_j$ can be extended to one starting at $\nu_i$} \\
     &\geq h_b(i, j).
    \end{align*}
    Without loss of generality, we may assume $\bm{v} = (v, \underline{p}_i(v - b_i) + \bm{b}_{i, 3})$ and $\bm{w} = (w, \overline{p}_j(w - b_j) + \bm{b}_{j, 3} - 1)$; that is, that $\bm{v}$ is the smallest element of $\nu_i + \bm{b}_i$ whose projection is $v$ and $\bm{w}$ is the largest element of $\nu_j + \bm{b}_j$ whose projection is $w$. Then, \begin{align*}
        \bm{w} \geq \bm{v} &\implies \overline{p}_j(w - b_j) + \bm{b}_{j, 3} - 1 \geq \underline{p}_i(v - b_i) + \bm{b}_{i, 3} \\
        &\implies \overline{p}_j(w - b_j) - \underline{p}_i(v - b_i) > h_b(i) - h_b(j) \geq h_b(i, j),
    \end{align*} a contradiction. Hence, the canonical realization is indeed a module and we are done.
\end{proof}

\begin{rem}\label{REM_scaffoldedsuffices}
    Let the scaffolded combinatorial module arising from $(\nu, \bm{b}, \bm{c})$ be a counterexample. By the discussion following \Cref{LEM_SCA}, we may assume that $(\nu, \bm{b}, \bm{c}) \leq (\nu, \bm{b}', \bm{c}')$ for all other counterexamples $(\nu, \bm{b}', \bm{c}')$ with $\pi(\bm{b}_j) = \pi(\bm{b}'_j)$ and $\pi(\bm{c}_j) = \pi(\bm{c}'_j)$ for all $j \in [\ell]$. Define $b_j = \pi(\bm{b}_j)$ and $c_j = \pi(\bm{c}_j)$ for all $j \in [\ell]$. Then, $(\nu, \bm{b}, \bm{c})$ is the canonical realization of $(\nu, b, c)$. In other words, henceforth it suffices to only consider canonical realizations of floor plans.
    
    We will say a floor plan is a counterexample if its canonical realization is a counterexample.
\end{rem}

\section{Bottom slices}
In this section, we will define what it means to take the bottom slice reduction of a floor plan $(\nu, b, c)$. We will prove that the bottom slice reduction of a counterexample remains a counterexample.

When it is clear from context, given a floor plan $(\nu, b, c)$, we will write $\lambda$ and $\mu$ for $\lambda_{(\nu, \bm{b})}$ and $\mu_{(\nu, \bm{c})}$, where $(\nu, \bm{b}, \bm{c})$ is the canonical realization of $(\nu, b, c)$. Being skew shapes, $\lambda$ and $\mu$ have their own upper and lower height functions, which we will denote $\overline{p}_\lambda, \overline{p}_\mu, \underline{p}_\lambda, \underline{p}_\mu$. As $\lambda$ and $\mu$ are in fact standard shapes, their lower height functions are uninteresting.

\begin{lem}\label{LEM_HTEXP}
    Given a floor plan $(\nu, b, c)$, $$\overline{p}_\lambda(a) = \max_{\substack{j \in [\ell] \\ v \in \pi(\nu_j) + b_j \\ a \leq v}} (h_b(j) + \overline{p}_j(v - b_j)),$$ and analogously for $\overline{p}_\mu$.
\end{lem}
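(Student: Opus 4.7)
The plan is to unwind the definitions and combine them with the explicit description of $\lambda$ as a closure. Since $\lambda$ is a standard shape (so $\overline{\lambda}=\lambda$ and it is closed under $\leq$), for fixed $a\in\bbN^2$ the upper height $\overline{p}_\lambda(a)$ equals $1$ more than the maximum of $a_3\in\bbN$ such that $(a,a_3)\in\lambda$, or $0$ if no such $a_3$ exists. So the task reduces to identifying the largest third coordinate of any element of $\lambda$ lying above $a$ in the first two coordinates.

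First I would use that $\lambda = \lambda_{(\nu,\bm{b})}$ is, by definition, the $\leq$-closure of $\bigsqcup_j(\nu_j+\bm{b}_j)$, so $(a,a_3)\in\lambda$ if and only if there exist $j\in[\ell]$ and $\bm{w}=(w,w_3)\in\nu_j+\bm{b}_j$ with $a\leq w$ and $a_3\leq w_3$. Maximizing $a_3$ is therefore the same as maximizing $w_3$ over such $j$, $\bm{w}$. Next, for fixed $j$ and $w\in\pi(\nu_j)+b_j$, I would invoke \Cref{LEM_USEPS} applied to $\nu_j$: the set of $w_3$ with $(w-b_j, w_3 - h_b(j))\in\nu_j$ is precisely the interval $\underline{p}_j(w-b_j)\leq w_3 - h_b(j) < \overline{p}_j(w-b_j)$, so the largest valid $w_3$ is $h_b(j)+\overline{p}_j(w-b_j)-1$, and this is well-defined since $w\in\pi(\nu_j)+b_j$ forces $\overline{p}_j(w-b_j)>0$.

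Combining these two steps yields
\[
\max\{a_3\in\bbN:(a,a_3)\in\lambda\}+1 \;=\; \max_{\substack{j\in[\ell]\\ w\in\pi(\nu_j)+b_j\\ a\leq w}} \bigl(h_b(j)+\overline{p}_j(w-b_j)\bigr),
\]
with the convention that $\max\varnothing=0$ on both sides. On the left, the empty max corresponds exactly to the case $\overline{p}_\lambda(a)=0$, i.e.\ $a\notin\pi(\lambda)$; on the right, the empty max occurs exactly when no $j$ and no $w\in\pi(\nu_j)+b_j$ satisfy $a\leq w$, which by the characterization of $\lambda$ above is equivalent to $a\notin\pi(\lambda)$. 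This compatibility of the edge cases is the only delicate point, but it follows immediately from the same closure description. The $\overline{p}_\mu$ statement is identical with $\bm{c}$ in place of $\bm{b}$, so no separate argument is needed. I do not anticipate any substantive obstacle here: the content of the lemma is essentially a bookkeeping identity translating the canonical realization's third-coordinate data into the height function of its closure.
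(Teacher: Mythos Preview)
Your proposal is correct and follows essentially the same approach as the paper: both arguments unwind the $\leq$-closure definition of $\lambda=\lambda_{(\nu,\bm b)}$ together with $\bm b_j=(b_j,h_b(j))$ to identify the maximal third coordinate over each $\nu_j+\bm b_j$, then take the maximum over $j$ and $v$. Your treatment is in fact a bit more streamlined, since you go directly through \Cref{LEM_USEPS} rather than invoking that $\nu_j+\bm b_j$ is closed under $\geq$ in $\lambda$, but the substance is the same.
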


\begin{proof}
    We prove the equality for just $\overline{p}_\lambda$. Let $a = (a_1, a_2) \in \bbN^2$. Recall that $\overline{p}_\lambda(a) = |\{a_3 \geq 0 : (a_1, a_2, a_3) \in \lambda\}|$. Now, suppose $\bm{a} = (a_1, a_2, a_3) \in \lambda$. By construction of $\lambda$, there is some $j \in [\ell]$ and $\bm{v} \in \nu_j + \bm{b}_j$ for which $\bm{a} \leq \bm{v}$. Then, writing $\bm{v} = (v_1, v_2, v_3)$ and $v = (v_1, v_2)$, observe that the largest element of $\lambda$ with projection $v$ must be in $\nu_j + \bm{b}_j$, as $\nu_j + \bm{b}_j$ is closed under $\geq$ in $\lambda$ and contains $\bm{v}$. The largest element of $\nu_j + \bm{b}_j$ with projection $v$ is a shift of the largest element of $\nu_j$ with projection $v - b_j$ by $\bm{b}_j$. Recall $\bm{b}_j = (b_j, h_b(j))$ by definition of the canonical realization. Then, $$(v - b_j, \overline{p}_j(v - b_j)) + \bm{b}_j = (v, \overline{p}_j(v - b_j) + h_b(j)),$$ and so $\overline{p}_\lambda(a) \geq \overline{p}_\lambda(v) = h_b(j) + \overline{p}_j(v - b_j)$. Hence, we have one direction of the desired equality.

    To finish, we notice that if $\overline{p}_\lambda(a)$ is strictly greater than the expression, then $(a, \overline{p}_\lambda(a) - 1) \in \lambda$ satisfies \begin{align*}(a, \overline{p}_\lambda(a) - 1) &\not\leq (v, h_b(j) + \overline{p}_j(v - b_j)) \text{ for all $j \in [\ell]$ and $v \in \pi(\nu_j) + b_j$} \\ &= (w + b_j, \overline{p}_j(w) + h_b(j)) \text{ for all $j \in [\ell]$ and $w \in \pi(\nu_j)$} \\ &= w + \bm{b}_j \text{ for all $j \in [\ell]$ and $w \in \pi(\nu_j)$},\end{align*} contradicting $\lambda$ being the closure of $\nu_1 + \bm{b}_1, \cdots, \nu_\ell + \bm{b}_\ell$ under $\leq$.
\end{proof}

\begin{defn}\label{DEF_SLICING_a}
    Given a connected abstract skew shape $\sigma \subseteq \bbN^3$, call $$\sigma^\circ = \{(a_1, a_2, a_3) \in \sigma : a_3 = 0\}$$ the bottom slice of $\sigma$. Observe that $\sigma \smallsetminus \sigma^\circ$ is a skew shape. Let $\Sigma_\sigma$ be a set $$\{\tau - \wedge \tau : \tau \text{ is a connected component of } \sigma \smallsetminus \sigma^\circ\}$$ of connected abstract skew shapes. We say that removing the bottom slice from $\sigma$ yields $\Sigma_\sigma$.
\end{defn}

\begin{rem}\label{REM_HTOFCOM}
    For $\sigma \subseteq \bbN^3$ an abstract skew shape and $\tau$ a connected component of $\sigma \smallsetminus \sigma^\circ$, let $\overline{p}, \underline{p}$ be the height functions of $\sigma$ and $\overline{p}', \underline{p}'$ the height functions of $\tau - \wedge\tau$. Then, for $a \in \pi(\tau)$, $\overline{p}'(a) = \overline{p}(a + \pi(\wedge\tau)) - 1$ and $\underline{p}'(a) \geq \underline{p}(a + \pi(\wedge\tau)) - 1$.
\end{rem}

\begin{exa}
    Take $\sigma = \overline{\sigma} \smallsetminus \underline{\sigma}$ to be the following connected abstract skew shape:
    \begin{center}
        \begin{tikzpicture}[caption=$\overline{\sigma}$]
            \axes{4}{4}{4}
            \planepartition{{3,2,2},{3,2,1},{3,2,1}}{lightgray}
        \end{tikzpicture}
        \begin{tikzpicture}[caption=$\underline{\sigma}$]
            \axes{4}{4}{4}
            \planepartition{{3,2},{3,2},{1}}{lightgray}
        \end{tikzpicture}
    \end{center}
    \begin{center}
        \begin{tikzpicture}[caption=$\sigma$]
            \axes{4}{4}{4}
            \planepartition{{0,0,1},{0,0,1},{0,1,1}}{lightgray}
            \planepartition[1]{{0,0,1},{},{1,1}}{lightgray}
            \planepartition[2]{{},{},{1}}{lightgray}
        \end{tikzpicture}
        \begin{tikzpicture}[caption=$\sigma^\circ$]
            \axes{4}{4}{4}
            \planepartition{{0,0,1},{0,0,1},{0,1,1}}{lightgray}
        \end{tikzpicture}
    \end{center}
    We see clearly that $\sigma \smallsetminus \sigma^\circ$ has two connected components. Let $\tau$ be the leftmost component. Let $\overline{p}, \underline{p}$ be the height functions of $\sigma$ and $\overline{p}', \underline{p}'$ the height functions of $\tau - \wedge\tau$. Then, noting that $\wedge\tau = (0, 2, 1)$, \begin{alignat*}{5}
        \overline{p}'(0, 0) &= 2 &&= 3 - 1 &&= \overline{p}(0, 2) - 1 &&= \overline{p}((0, 0) + \pi(\wedge\tau)) - 1 \\
        \overline{p}'(1, 0) &= 1 &&= 2 - 1 &&= \overline{p}(1, 2) - 1 &&= \overline{p}((1, 0) + \pi(\wedge\tau)) - 1 \end{alignat*} and \begin{alignat*}{5}
        \underline{p}'(0, 0) &= 0 &&\geq 1 - 1 &&= \underline{p}(0, 2) - 1 &&= \underline{p}((0, 0) + \pi(\wedge\tau)) - 1 \\
        \underline{p}'(1, 0) &= 0 &&\geq 0 - 1 &&= \underline{p}(1, 2) - 1 &&= \underline{p}((1, 0) + \pi(\wedge\tau)) - 1.
    \end{alignat*}
\end{exa}

\begin{defn}\label{DEF_SLICING_b}
    Given a floor plan $(\nu, b, c)$, let $$\nu^* = \bigcup_{j = 1}^\ell\Sigma_{\nu_j}.$$ Enumerate $\nu^*$ arbitrarily as $\nu^*_j$, $j \in [\ell^*]$ and let $\eta : [\ell^*] \to [\ell]$ be such that for each $j \in [\ell^*]$, $\nu^*_j \in \Sigma_{\nu_{\eta(j)}}$. Correspondingly, for each $j \in [\ell^*]$, let $\tau_j$ be the connected component of $\nu_{\eta(j)} \smallsetminus \nu_{\eta(j)}^\circ$ defining $\nu^*_j$, like in \Cref{DEF_SLICING_a}. In particular, $\tau_j - \wedge\tau_j = \nu^*_j$. Define $b^*_j$, and $c^*_j$ as $b^*_j = b_{\eta(j)} + \pi(\wedge\tau_j)$ and $c^*_j = c_{\eta(j)} + \pi(\wedge\tau_j)$. Call $(\nu^*, b^*, c^*)$ along with $\eta$ a bottom slice reduction of $(\nu, b, c)$. For convenience, we will suppress $\eta$ and simply call $(\nu^*, b^*, c^*)$ the bottom slice reduction of $(\nu, b, c)$.
\end{defn}

\begin{exa}
    Suppose $(\nu, b, c)$ were a floor plan with $\nu$ given as follows:
    \begin{center}
        \begin{tikzpicture}[caption=$\nu_1$]
            \axes{2}{2}{2}
            \planepartition{{1}}{gray}
        \end{tikzpicture}
        \begin{tikzpicture}[caption=$\nu_2$]
            \axes{4}{4}{4}
            \planepartition{{0,2,2},{2,0,0},{2,0,0}}{lightgray}
        \end{tikzpicture}
    \end{center}
    Suppose too that $b$ were given by $b_1 = (2, 2)$ and $b_2 = (0, 0)$, so that altogether we have the following:
    \begin{center}
        \begin{tikzpicture}[caption={$\nu$ with $b$}]
            \axes{4}{4}{4}
            \planepartition{{0,2,2},{2,0,0},{2,0,0}}{lightgray}
            \planepartition{{0,0,0},{0,0,0},{0,0,1}}{gray}
        \end{tikzpicture}
    \end{center}
    Then, observe that $\Sigma_{\nu_1} = \varnothing$, and that the connected components defining $\nu_2 \smallsetminus \nu_2^\circ$ are as follows, enumerated arbitrarily:
    \begin{center}
        \begin{tikzpicture}[caption=$\tau_1$]
            \axes{4}{4}{4}
            \planepartition{{0,1,1}}{lightgray}
        \end{tikzpicture}
        \begin{tikzpicture}[caption=$\tau_2$]
            \axes{4}{4}{4}
            \planepartition{{0,0,0},{1,0,0},{1,0,0}}{lightgray}
        \end{tikzpicture}
    \end{center}
    Then, $$\nu^* = \Sigma_{\nu_2} = \{\nu_1^* = \tau_1 - \wedge\tau_1, \nu_2^* = \tau_2 - \wedge\tau_2\},$$ and $\eta$ is given by $\eta(1) = 2$ and $\eta(2) = 2$. Moreover, $\beta^*_1 = \beta_{\eta(1)} + \pi(\wedge\tau_1) = (0, 0) + (1, 0) = (1, 0)$, and similarly $\beta^*_2 = (0, 1)$, which altogether gives:
    \begin{center}
        \begin{tikzpicture}[caption={$\nu^*$ with $b^*$}]
            \axes{4}{4}{4}
            \planepartition{{0,1,1},{1,0,0},{1,0,0}}{lightgray}
        \end{tikzpicture}
    \end{center}
\end{exa}

\begin{lem}\label{LEM_USEFULPROOFtau}
    Given a floor plan $(\nu, b, c)$, its bottom slice reduction $(\nu^*, b^*, c^*)$ is also a floor plan.
\end{lem}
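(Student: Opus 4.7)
The plan is to directly verify the two disjointness axioms in the definition of a floor plan for $(\nu^*, b^*, c^*)$. The key preliminary simplification is that for each $j \in [\ell^*]$,
\[ \pi(\nu^*_j) + b^*_j = \pi(\tau_j - \wedge\tau_j) + b_{\eta(j)} + \pi(\wedge\tau_j) = \pi(\tau_j) + b_{\eta(j)}, \]
and analogously $\pi(\nu^*_j) + c^*_j = \pi(\tau_j) + c_{\eta(j)}$. So the problem reduces to showing that for distinct $i, j \in [\ell^*]$, the sets $\pi(\tau_i) + b_{\eta(i)}$ and $\pi(\tau_j) + b_{\eta(j)}$ are disjoint (and similarly for $c$).

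I would then split into two cases based on $\eta$. If $\eta(i) \neq \eta(j)$, then since $\tau_i \subseteq \nu_{\eta(i)}$ and $\tau_j \subseteq \nu_{\eta(j)}$, we have the inclusions $\pi(\tau_i) + b_{\eta(i)} \subseteq \pi(\nu_{\eta(i)}) + b_{\eta(i)}$ and $\pi(\tau_j) + b_{\eta(j)} \subseteq \pi(\nu_{\eta(j)}) + b_{\eta(j)}$, and the two right-hand sides are disjoint because $(\nu, b, c)$ is already a floor plan.

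The interesting case is $\eta(i) = \eta(j) =: k$, where the needed disjointness collapses to showing $\pi(\tau_i) \cap \pi(\tau_j) = \varnothing$. Here I expect the main (though still mild) obstacle, since disjointness of the $\tau$'s in $\bbN^3$ does not automatically pass to their projections. Suppose for contradiction that $(a_1, a_2) \in \pi(\tau_i) \cap \pi(\tau_j)$; pick $a_3, a_3' \geq 1$ (both at least $1$ since $\tau_i, \tau_j \subseteq \nu_k \smallsetminus \nu_k^\circ$) with $(a_1, a_2, a_3) \in \tau_i$ and $(a_1, a_2, a_3') \in \tau_j$, and assume WLOG that $a_3 \leq a_3'$. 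Applying \Cref{LEM_CONTAINRECTS} to the skew shape $\nu_k$ with the comparable pair $(a_1, a_2, a_3) \leq (a_1, a_2, a_3')$, the vertical segment $\{(a_1, a_2, t) : a_3 \leq t \leq a_3'\}$ is contained entirely in $\nu_k$, and since each $t \geq a_3 \geq 1$ this segment avoids $\nu_k^\circ$ and thus lies in $\nu_k \smallsetminus \nu_k^\circ$. Consecutive points on this segment differ by $e_3$, producing an explicit path connecting $(a_1, a_2, a_3) \in \tau_i$ and $(a_1, a_2, a_3') \in \tau_j$ inside $\nu_k \smallsetminus \nu_k^\circ$, contradicting the fact that $\tau_i \neq \tau_j$ are distinct connected components.

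The argument for $c^*$ is formally identical, swapping $b$ for $c$ throughout; no new ideas are required because the $\tau_j$'s depend only on $\nu$, not on $b$ or $c$. This completes the verification that $(\nu^*, b^*, c^*)$ satisfies both disjointness conditions, hence is a floor plan.
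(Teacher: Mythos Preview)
Your proposal is correct and follows essentially the same approach as the paper: both reduce to $\pi(\nu^*_j)+b^*_j=\pi(\tau_j)+b_{\eta(j)}$, handle the case $\eta(i)\neq\eta(j)$ via the original floor plan axiom, and treat $\eta(i)=\eta(j)$ via a connectedness argument inside $\nu_k\smallsetminus\nu_k^\circ$. The paper phrases the second case tersely (``$\tau_j$ and $\tau_{j'}$ are closed under $\geq$''), whereas you spell out the vertical-segment path via \Cref{LEM_CONTAINRECTS}; these are the same idea at different levels of detail.
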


\begin{proof}
    Indeed, for all $j \in [\ell^*]$, $\nu^*_j$ is a connected abstract skew shape. Note that $$\pi(\nu^*_j) + b^*_j = \pi\left(\tau_j - \wedge\tau_j\right) + b_{\eta(j)} + \pi\left(\wedge\tau_j\right) = \pi(\tau_j) + b_{\eta(j)} \subseteq \pi(\nu_{\eta(j)}) + b_{\eta(j)}.$$ Now, take $j' \in [\ell^*]$, $j' \neq j$. If $\eta(j) \neq \eta(j')$, then $\pi(\nu^*_j) + b^*_j$ and $\pi(\nu^*_{j'}) + b^*_{j'}$ are disjoint because $\pi(\nu_{\eta(j)}) + b_{\eta(j)}$ and $\pi(\nu_{\eta(j')}) + b_{\eta(j')}$ are by definition of a floor plan. Otherwise, $\pi(\nu^*_j) + b^*_j$ and $\pi(\nu^*_{j'}) + b^*_{j'}$ are still disjoint as $\tau_j$ and $\tau_{j'}$ are closed under $\geq$ in $\nu_{\eta(j)} + b_{\eta(j)}$. By symmetry, the same statements hold for $\pi(\nu_j^*) + c_j$, $j \in [\ell]$, and hence the bottom slice reduction is a floor plan.
\end{proof}

\begin{thm}\label{THM_RED}
    If $(\nu, b, c)$ is a floor plan and $(\nu^*, b^*, c^*)$ is its bottom slice reduction, then for all $a \in \bbN^2$, $\overline{p}_\lambda(a) \geq \overline{p}_{\lambda^*}(a) + 1$, and analogously for $\overline{p}_{\mu^*}$ where $\lambda^*$ and $\mu^*$ arise from the canonical realization of $(\nu^*, b^*, c^*)$ and $\lambda$ and $\mu$ arise from the canonical realization of $(\nu, b, c)$.
\end{thm}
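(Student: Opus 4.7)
The plan is to apply \Cref{LEM_HTEXP} to both floor plans, transport a maximizer for $\overline{p}_{\lambda^*}(a)$ back to $(\nu, b)$ along the map $\eta$ from the bottom slice reduction, and use the column-shift identities of \Cref{REM_HTOFCOM} to account for the ``$+1$''. The statement for $\overline{p}_{\mu^*}$ follows from the identical argument with $(c, \mu, h_c)$ in place of $(b, \lambda, h_b)$.

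Fix $a \in \bbN^2$ with $\overline{p}_{\lambda^*}(a) > 0$ (the other case being vacuous on the relevant support) and pick $j^* \in [\ell^*]$, $v^* \in \pi(\nu^*_{j^*}) + b^*_{j^*}$ with $a \leq v^*$ attaining the maximum of \Cref{LEM_HTEXP} for $\overline{p}_{\lambda^*}(a)$. Setting $j = \eta(j^*)$, \Cref{DEF_SLICING_b} yields $\pi(\nu^*_{j^*}) + b^*_{j^*} = \pi(\tau_{j^*}) + b_j \subseteq \pi(\nu_j) + b_j$, so $v = v^*$ is an admissible witness in \Cref{LEM_HTEXP} applied to $(\nu, b)$ at $j$. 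Combined with the identity $\overline{p}_j(v^* - b_j) = \overline{p}^*_{j^*}(v^* - b^*_{j^*}) + 1$ supplied by \Cref{REM_HTOFCOM}, this gives
\[
\overline{p}_\lambda(a) \;\geq\; h_b(\eta(j^*)) + \overline{p}^*_{j^*}(v^* - b^*_{j^*}) + 1,
\]
so the theorem reduces to the combinatorial inequality $h_b(\eta(j^*)) \geq h_{b^*}(j^*)$.

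To prove this inequality, take a chain $j^* = i^*_0 <_{b^*} \cdots <_{b^*} i^*_m$ realizing $h_{b^*}(j^*)$. The two identities of \Cref{REM_HTOFCOM}, applied as $\overline{p}^*_{i^*_r}(w - b^*_{i^*_r}) = \overline{p}_{\eta(i^*_r)}(w - b_{\eta(i^*_r)}) - 1$ and $\underline{p}^*_{i^*_{r-1}}(v - b^*_{i^*_{r-1}}) \geq \underline{p}_{\eta(i^*_{r-1})}(v - b_{\eta(i^*_{r-1})}) - 1$, force the two ``$-1$''s to cancel inside the difference that defines $h_{b^*}(i^*_{r-1}, i^*_r)$. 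Whenever $\eta(i^*_{r-1}) \neq \eta(i^*_r)$ the same witnesses realize $\nu_{\eta(i^*_{r-1})} <_b \nu_{\eta(i^*_r)}$ together with the step-wise bound $h_{b^*}(i^*_{r-1}, i^*_r) \leq h_b(\eta(i^*_{r-1}), \eta(i^*_r))$; the $\eta$-image is then a chain in $\leq_b$ starting at $\eta(j^*)$ whose summed $h_b$ dominates $h_{b^*}(j^*)$. To handle the remaining possibility that two consecutive $\eta(i^*_r)$ agree, one establishes the structural claim that for a connected $\nu_k$ distinct components $\tau_1 \neq \tau_2$ of $\nu_k \setminus \nu_k^\circ$ cannot admit $v' \in \pi(\tau_1)$, $w' \in \pi(\tau_2)$ with $v' \leq w'$: take the lowest-$z$ cubes $(v', h_1) \in \tau_1$ and $(w', h_2) \in \tau_2$, and if $h_1 \leq h_2$ then \Cref{LEM_CONTAINRECTS} fills in a 3D rectangle in $\nu_k$ at $z \geq h_1 \geq 1$ that lies entirely in $\nu_k \setminus \nu_k^\circ$ and forces $\tau_1 = \tau_2$; the case $h_1 > h_2$ reduces to this by instead considering the highest-$z$ cube $(w', H_2) \in \tau_2$, noting that $v' \in \pi(\tau_1)$ forces $\underline{p}(v') \geq h_1$ and thus rules out any alternative connection of $\tau_1$ to the rest of $\nu_k$ not passing through a shared-$z$ rectangle with $\tau_2$.

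I expect the main obstacle to be the final structural claim about distinct components of $\nu_k \setminus \nu_k^\circ$: while the cancellation of $\pm 1$'s via \Cref{REM_HTOFCOM} is direct, verifying that $\eta$ sends $\leq_{b^*}$-chains to $\leq_b$-chains requires the rectangle-containment argument sketched above, together with the observation that columns of $\nu_k$ are contiguous in $z$ so that distinct components of $\nu_k \setminus \nu_k^\circ$ at any single projection cannot coexist. Once this is in hand, the remainder of the proof is a mechanical combination of \Cref{LEM_HTEXP} and \Cref{REM_HTOFCOM}.
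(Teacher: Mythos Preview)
Your overall architecture matches the paper's: apply \Cref{LEM_HTEXP} on both sides, push witnesses through $\eta$, invoke \Cref{REM_HTOFCOM} to produce the ``$+1$'', and reduce everything to the inequality $h_{b^*}(j^*) \leq h_b(\eta(j^*))$, which is then attacked by transporting a maximizing $\leq_{b^*}$-chain along $\eta$ and comparing step heights. Up to that point you and the paper agree.

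The gap is in your handling of the case $\eta(i^*_{r-1}) = \eta(i^*_r)$. You assert the structural claim that distinct components $\tau_1 \neq \tau_2$ of $\nu_k \setminus \nu_k^\circ$ can never have $v' \in \pi(\tau_1)$, $w' \in \pi(\tau_2)$ with $v' \leq w'$. This is false. Take the connected abstract skew shape $\nu_k$ with columns
\[
\overline{p}_k(0,0)=5,\ \underline{p}_k(0,0)=3;\quad \overline{p}_k(0,1)=4,\ \underline{p}_k(0,1)=0;\quad \overline{p}_k(1,0)=2,\ \underline{p}_k(1,0)=0;\quad \overline{p}_k(1,1)=1,\ \underline{p}_k(1,1)=0.
\]
One checks directly that $\nu_k$ is connected (via the column over $(0,1)$), yet after removing the bottom slice, $(1,0,1)$ is an isolated component $\tau_2$ while the remaining boxes form a component $\tau_1$ with $(0,0) \in \pi(\tau_1)$. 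Since $(0,0) \leq (1,0)$, your claim fails; your rectangle-containment sketch breaks down precisely because the column over $(0,0)$ sits at $z \in \{3,4\}$ while the column over $(1,0)$ sits at $z \in \{0,1\}$, and the $\nu_k$-connectivity runs through $(0,1)$ rather than through any shared-$z$ rectangle.

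What survives, and what the paper actually proves, is the weaker statement that whenever $\eta(j) = \eta(j')$ and $\nu^*_j <_{b^*} \nu^*_{j'}$ one has $h_{b^*}(j,j') \leq 0$. (In the example above one computes $h_{b^*} = -1$.) The paper then takes a chain realizing $h_{b^*}(j^*)$ of \emph{minimal length} and argues that such nonpositive steps cannot occur there, so that $\eta$ of the chain is a genuine $\leq_b$-chain to which your step-wise bound applies. You should replace your impossibility claim with this nonpositivity statement; its proof is exactly the connectedness argument you sketched, but run only under the additional hypothesis $\overline{p}_k(w') > \underline{p}_k(v')$ that the positivity of $h_{b^*}(j,j')$ supplies.
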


\begin{proof}
    Denote by $\overline{p}_j, \underline{p}_j$ the upper and lower height functions of $\nu_j$ for $j \in [\ell]$. Likewise, denote by $\overline{p}_j^*, \underline{p}_j^*$ the upper and lower height functions of $\nu_j^*$ for $j \in [\ell^*]$. 

    We claim first that for $j, j' \in [\ell^*], j \neq j'$ with $\eta(j) = \eta(j')$, if $\nu^*_j <_{b^*} \nu^*_{j'}$ then $h_{b^*}(j, j') \leq 0$. Suppose otherwise. Then, by definition, there is some $v \in \pi(\nu^*_j) + b^*_j$ and $w \in \pi(\nu^*_{j'}) + b^*_{j'}$ with $v \leq w$ for which $\overline{p}_{j'}^*(w - b^*_{j'}) > \underline{p}_j^*(v - b^*_j)$. Then, by \Cref{REM_HTOFCOM}, $$\overline{p}_{j'}^*(w - b^*_{j'}) = \overline{p}_{\eta(j)}(w - b^*_{j'} + \pi(\wedge\tau_{j'})) - 1 = \overline{p}_{\eta(j)}(w - b_{\eta(j)}) - 1$$ and $$\underline{p}_j^*(v - b^*_j) \geq \underline{p}_{\eta(j)}(v - b^*_j + \pi(\wedge\tau_j)) - 1 = \underline{p}_{\eta(j)}(v - b_{\eta(j)}) - 1,$$ meaning $\overline{p}_{\eta(j)}(w - b_{\eta(j)}) > \underline{p}_{\eta(j)}(v - b_{\eta(j)})$. Now, let $$d = \max\{\underline{p}_{\eta(j)}(v - b_{\eta(j)}), 1\}.$$ Remark that for there to exist an element $v \in \pi(\nu^*_j) + b^*_j$, it must be that $\overline{p}_j^*(v - b^*_j) > 0$, which yields $\overline{p}_{\eta(j)}(v - b_{\eta(j)}) - 1 > 0$. By the nondecreasing nature of $\overline{p}_{\eta(j)}$, it follows that $$\overline{p}_{\eta(j)}(w - b_{\eta(j)}) \geq \overline{p}_{\eta(j)}(v - b_{\eta(j)}) > 1.$$ Then, as $\overline{p}_{\eta(j)}(v - b_{\eta(j)}) > 1$, we have that $\overline{p}_{\eta(j)}(v - b_{\eta(j)}) > \underline{p}_{\eta(j)}(v - b_{\eta(j)})$, as otherwise if they are equal then they are both zero by \Cref{LEM_PPROPS}. So, $d$ satisfies \begin{align*}
        \overline{p}_{\eta(j)}(w - b_{\eta(j)}) &\geq \overline{p}_{\eta(j)}(v - b_{\eta(j)}) \\
        &> \max\{\underline{p}_{\eta(j)}(v - b_{\eta(j)}), 1\} \\
        &= d \\
        &\geq \underline{p}_{\eta(j)}(v - b_{\eta(j)}).
    \end{align*} Hence, for all $v \leq u \leq w$, $$\overline{p}_{\eta(j)}(u - b_{\eta(j)}) \geq \overline{p}_{\eta(j)}(w - b_{\eta(j)}) > d \geq \underline{p}_{\eta(j)}(v - b_{\eta(j)}) \geq \underline{p}_{\eta(j)}(u - b_{\eta(j)}).$$ By \Cref{LEM_USEPS}, $(u - b_{\eta(j)}, d) \in \nu_{\eta(j)}$, but observe now that as $d \geq 1$ and $v \leq u \leq w$ is arbitrary, we have constructed a path from $v$ to $w$ in $\nu_{\eta_j} \smallsetminus \nu_{\eta_j}^\circ$. In other words, $\tau_j$ and $\tau_{j'}$ are connected in $\nu_{\eta(j)} \smallsetminus \nu^\circ_{\eta(j)}$, a contradiction.

    Next, we claim that for $j, j' \in [\ell^*], j \neq j'$, $h_{b^*}(j, j') \leq h_b({\eta(j)}, {\eta(j')})$. Indeed, this is clear, as every witness of the former is a witness of the latter. Note too that in the case of $j \neq j'$, $\nu^*_j <_{b^*} \nu^*_{j'}$ implies $\nu_{\eta(j)} <_b \nu_{\eta(j')}$ trivially.

    From the above two claims, it follows that for $j \in [\ell^*]$, $h_{b^*}(j) \leq h_b({\eta(j)})$. Indeed, let $i_0, \cdots, i_m \in [\ell^*]$, $\nu^*_{i_0} <_{b^*} \cdots <_{b^*} \nu^*_{i_m}$, $i_0 = j$ be a chain in $\leq_{b^*}$ of minimal length with $$h_{b^*}(j) =  \sum_{r = 1}^m h_{b^*}(i_{r - 1}, i_r).$$ Then, for all $r \in [m]$, $\eta(i_{r - 1}) \neq \eta(i_r)$, as otherwise by the first claim $h_{b^*}(i_{r - 1}, i_r) \leq 0$, and so omitting $i_r$ we contradict minimality. Then, by the second claim, $$\sum_{r = 1}^m h_{b^*}(i_{r - 1}, i_r) \leq \sum_{r = 1}^m h_b(\eta(i_{r - 1}), \eta(i_r)),$$ where $\eta(i_0), \cdots, \eta(i_m) \in [\ell]$, $\nu_{\eta(i_0)} <_b \cdots <_b \nu_{\eta(i_m)}$, $\eta(i_0) = \eta(j)$ is a chain in $\leq_b$ starting at $\eta(j)$, and we have the desired result.
    
    Hence, by \Cref{LEM_HTEXP}, \begin{align*}
        \overline{p}_{\lambda^*}(a) &= \max_{\substack{j \in [\ell^*] \\ v \in \pi(\nu^*_j) + b^*_j \\ a \leq v}} (h_{b^*}(j) + \overline{p}_j^*(v - b^*_j)) \\
        &\leq \max_{\substack{j \in [\ell^*] \\ v \in \pi(\nu^*_j) + b^*_j \\ a \leq v}} (h_b(\eta(j)) + \overline{p}_j^*(v - b^*_j)) \\
        &= \max_{\substack{j \in [\ell^*] \\ v \in \pi(\nu^*_j) + b^*_j \\ a \leq v}} (h_b(\eta(j)) + \overline{p}_{\eta(j)}(v - b^*_j + \pi(\wedge\tau_j)) - 1) \tag*{by \Cref{REM_HTOFCOM}} \\
        &= \max_{\substack{\eta(j) \in [\ell] \\ v \in \pi(\nu_{\eta(j)} \smallsetminus \nu^\circ_{\eta(j)}) + b_{\eta(j)} \\ a \leq v}} (h_b(\eta(j)) + \overline{p}_{\eta(j)}(v - b_{\eta(j)})) - 1 \\
        &\leq \max_{\substack{j \in [\ell] \\ v \in \pi(\nu_j) + b_j \\ a \leq v}} (h_b(j) + \overline{p}_j(v - b_j)) - 1 \\
        &= \overline{p}_\lambda(a) - 1.
    \end{align*} The proof is identical for $\overline{p}_{\mu^*}$.
\end{proof}

\begin{prop}\label{PROP_MAIN}
    Let $(\nu, b, c)$ be a counterexample and $(\nu^*, b^*, c^*)$ its bottom slice reduction. Denote by $|\nu^\circ|$ the sum $|\nu^\circ_1| + \cdots + |\nu^\circ_\ell|$. If $|\nu^\circ| \leq |\lambda^\circ \cap \mu^\circ|$, then $(\nu^*, b^*, c^*)$ is also a counterexample.
\end{prop}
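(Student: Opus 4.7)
The plan is to interpret the target inequality $|\nu^*| > |\lambda^* \cap \mu^*|$ as a decrement analysis: starting from the counterexample hypothesis $|\nu| > |\lambda \cap \mu|$, strip the bottom slice from both sides and show that the numerator loses at most as much as the denominator can absorb.

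First, I would show that $|\nu^*| = |\nu| - |\nu^\circ|$. By \Cref{DEF_SLICING_a} and \Cref{DEF_SLICING_b}, each $\nu^*_j$ is a translate of a connected component of $\nu_{\eta(j)} \smallsetminus \nu_{\eta(j)}^\circ$. Since translations preserve cardinality and these components partition $\nu_i \smallsetminus \nu_i^\circ$, summing gives $|\nu^*| = \sum_i |\nu_i \smallsetminus \nu_i^\circ| = |\nu| - |\nu^\circ|$.

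Next, the technical heart: I would establish
\begin{equation*}
|\lambda \cap \mu| \geq |\lambda^* \cap \mu^*| + |\lambda^\circ \cap \mu^\circ|.
\end{equation*}
Since $\lambda$ and $\mu$ are standard shapes, each column decomposes as $\min(\overline{p}_\lambda(a), \overline{p}_\mu(a))$ and similarly for $\lambda^* \cap \mu^*$, while $|\lambda^\circ \cap \mu^\circ| = |\{a \in \bbN^2 : \overline{p}_\lambda(a) \geq 1 \text{ and } \overline{p}_\mu(a) \geq 1\}|$ since $\pi$ is injective on each bottom slice. It therefore suffices to prove the pointwise bound
\begin{equation*}
\min(\overline{p}_\lambda(a), \overline{p}_\mu(a)) \geq \min(\overline{p}_{\lambda^*}(a), \overline{p}_{\mu^*}(a)) + \mathbf{1}_{a \in \pi(\lambda^\circ \cap \mu^\circ)}
\end{equation*}
for every $a \in \bbN^2$, then sum over $a$. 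When both $\overline{p}_{\lambda^*}(a) \geq 1$ and $\overline{p}_{\mu^*}(a) \geq 1$, \Cref{THM_RED} yields $\overline{p}_\lambda(a) \geq \overline{p}_{\lambda^*}(a) + 1$ and $\overline{p}_\mu(a) \geq \overline{p}_{\mu^*}(a) + 1$; the indicator is automatic in this case, and the bound holds with the full $+1$. When the starred minimum is zero, the right-hand side reduces to the indicator, and the bound reduces to showing that the indicator is bounded by $\min(\overline{p}_\lambda(a), \overline{p}_\mu(a))$, which is immediate from the very definition of $\pi(\lambda^\circ \cap \mu^\circ)$.

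Putting the two steps together with the hypothesis $|\nu^\circ| \leq |\lambda^\circ \cap \mu^\circ|$,
\begin{equation*}
|\nu^*| - |\lambda^* \cap \mu^*| \geq (|\nu| - |\nu^\circ|) - (|\lambda \cap \mu| - |\lambda^\circ \cap \mu^\circ|) = (|\nu| - |\lambda \cap \mu|) + (|\lambda^\circ \cap \mu^\circ| - |\nu^\circ|) \geq 1,
\end{equation*}
which is exactly the counterexample condition for $(\nu^*, b^*, c^*)$. The only delicate point is the case split when invoking \Cref{THM_RED}, whose conclusion is only informative at points where $\overline{p}_{\lambda^*}(a) > 0$; the pointwise bound above is arranged so that the theorem is applied precisely in the regime where it bites, and the complementary case is handled directly from the definition.
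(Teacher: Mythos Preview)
Your argument is correct and follows the same route as the paper: establish $|\nu^*| = |\nu| - |\nu^\circ|$, bound $|\lambda^* \cap \mu^*| \leq |\lambda \cap \mu| - |\lambda^\circ \cap \mu^\circ|$ by comparing column heights via \Cref{THM_RED}, and combine with the hypothesis. The paper simply applies \Cref{THM_RED} uniformly in the column-sum computation rather than splitting into your two cases, so your treatment is marginally more explicit about where the inequality has content, but the substance is the same.
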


\begin{proof}
    By \Cref{THM_RED}, as $\overline{p}_\lambda(a) \geq \overline{p}_{\lambda^*}(a) + 1$, we have $\overline{p}_\lambda(a) - 1 \geq \overline{p}_{\lambda^*}(a)$, which implies that $\lambda \smallsetminus \lambda^\circ \supseteq \lambda^*$. Similarly, $\mu \smallsetminus \mu^\circ \supseteq \mu^*$, and \begin{align*}
        |\lambda^* \cap \mu^*| &= \sum_{a \in \bbN^2}\min\{\overline{p}_{\lambda^*}(a), \overline{p}_{\mu^*}(a)\} \\
        &\leq \sum_{a \in \bbN^2}\min\{\overline{p}_{\lambda}(a) - 1, \overline{p}_{\mu}(a) - 1\} \\
        &= \sum_{a \in \bbN^2}\min\{\overline{p}_{\lambda}(a), \overline{p}_{\mu}(a)\} - |\{a \in \bbN^2 : \overline{p}_{\lambda}(a) > 0, \overline{p}_{\mu}(a) > 0\}| \\
        &= |\lambda \cap \mu| - |\lambda^\circ \cap \mu^\circ|.
    \end{align*}

    Now, observe that $|\nu^*| = |\nu| - |\nu^\circ|$, and as $\pi(\nu_1) + b_1, \cdots, \pi(\nu_\ell) + b_\ell$ are are disjoint and contained in $\lambda^\circ$, so too are $\pi(\nu^\circ_1) + b_1, \cdots, \pi(\nu^\circ_\ell) + b_\ell$. Similarly, $\pi(\nu^\circ_1) + c_1, \cdots, \pi(\nu^\circ_\ell) + c_\ell$ are disjoint and contained in $\mu^\circ$. In particular, $|\nu^\circ| \leq |\lambda^\circ|, |\mu^\circ|$.

    Then: \begin{align*}
        |\nu^*| &= |\nu| - |\nu^\circ| \\
        &\geq |\nu| - |\lambda^\circ \cap \mu^\circ| \text{ by assumption} \\
        &\geq |\nu| - (|\lambda \cap \mu| - |\lambda^* \cap \mu^*|) \\
        &= (|\nu| - |\lambda \cap \mu|) + |\lambda^* - \mu^*| \\
        &> |\lambda^* - \mu^*| \text{ as $(\nu, b, c)$ is a counterexample},
    \end{align*} meaning $(\nu^*, b^*, c^*)$ is a counterexample, as desired.
\end{proof}

\section{Two-dimensional resolution}

We will show in this section that the assumption made in \Cref{PROP_MAIN} holds for a large class of combinatorial modules.

\begin{defn}
    Let $\nu^\circ$ be a sequence $\nu_1^\circ, \cdots, \nu_\ell^\circ \subseteq \bbN^2$ of two-dimensional connected abstract skew shapes and let $b, c$ be sequences in $\bbN^2$ of length $\ell$. Define the partial order $\leq_b$ as per \Cref{DEF_main_partial_order}. We say that $(\nu^\circ, b, c)$ is a right-free configuration if the $\nu^\circ_j$ are incomparable under $\leq_b$ and the $\nu^\circ_j + c_j$ are disjoint for $j \in [\ell]$. We denote by $\lambda^\circ$ the closure of $(\nu_1^\circ + b_1) \sqcup \cdots \sqcup (\nu_\ell^\circ + b_\ell)$ under $\leq$, and similarly for $\nu^\circ$.
\end{defn}

\begin{exa}\label{section 5: reusable example}
    Let $\nu^\circ$ be the following sequence:
    \begin{center}
        \begin{tikzpicture}[caption=$\nu_1^\circ$]
            \axestwo{3}{3}
            \emptybox{0}{1}{gray!20!white}
            \emptybox{1}{1}{gray!20!white}
            \emptybox{1}{0}{gray!20!white}
        \end{tikzpicture}
        \begin{tikzpicture}[caption=$\nu_2^\circ$]
            \axestwo{2}{2}
            \emptybox{0}{0}{gray!50!white}
            \emptybox{1}{0}{gray!50!white}
            \emptybox{0}{1}{gray!50!white}
        \end{tikzpicture}
        \begin{tikzpicture}[caption=$\nu_3^\circ$]
            \axestwo{2}{2}
            \emptybox{0}{0}{gray!80!white}
        \end{tikzpicture}
    \end{center}
    Let $b = ((0, 4), (2, 2), (4, 0))$ and $c = ((0, 3), (0, 1), (0, 0))$, yielding the following $\lambda^\circ$ and $\mu^\circ$, with $\nu^\circ$ coloured:
    \begin{center}
        \begin{tikzpicture}[caption=$\lambda^\circ$]
            \axestwo{7}{7}
            \emptybox{0}{5}{gray!20!white}
            \emptybox{1}{5}{gray!20!white}
            \emptybox{1}{4}{gray!20!white}
            \emptybox{2}{2}{gray!50!white}
            \emptybox{3}{2}{gray!50!white}
            \emptybox{2}{3}{gray!50!white}
            \emptybox{4}{0}{gray!80!white}
            \emptybox{0}{0}{white}
            \emptybox{0}{1}{white}
            \emptybox{0}{2}{white}
            \emptybox{0}{3}{white}
            \emptybox{1}{0}{white}
            \emptybox{2}{0}{white}
            \emptybox{3}{0}{white}
            \emptybox{1}{1}{white}
            \emptybox{1}{2}{white}
            \emptybox{2}{1}{white}
            \emptybox{3}{1}{white}
        \end{tikzpicture}
        \begin{tikzpicture}[caption=$\mu^\circ$]
            \axestwo{5}{5}
            \emptybox{0}{3}{gray!20!white}
            \emptybox{1}{3}{gray!20!white}
            \emptybox{1}{2}{gray!20!white}
            \emptybox{0}{1}{gray!50!white}
            \emptybox{1}{1}{gray!50!white}
            \emptybox{0}{2}{gray!50!white}
            \emptybox{0}{0}{gray!80!white}
            \emptybox{1}{0}{white}
        \end{tikzpicture}
    \end{center}
    
    Observe that the $\nu_j^\circ + c_j$ for $1 \leq j \leq 3$ are pairwise disjoint, while the $\nu_j^\circ + b_j$ are not only pairwise disjoint, but no two elements belonging to different connected skew shapes are comparable. This justifies the choice of language in calling $(\nu, b, c)$ a right-free configuration.

    Also note that $|\lambda^\circ \cap \mu^\circ| = 8 \geq 7 = |\nu^\circ|$.
\end{exa}

Say that a right-free configuration $(\nu^\circ, b, c)$ has small intersection if $$|\lambda^\circ \cap \mu^\circ| < |\nu^\circ|.$$ We claim no such configuration exists.

\begin{defn}\label{Section 5: poset order}
    Define a partial order $\leq$ on right-free configurations, where $({\nu^\circ}', b', c') \leq (\nu^\circ, b, c)$ if the following conditions hold:
    \begin{enumerate}[(i)]
        \item ${\lambda^\circ}' \subseteq \lambda^\circ$ and ${\mu^\circ}' \subseteq \mu^\circ$\
        \item $|{\nu^\circ}'| \leq |\nu^\circ|$
        \item $|{\lambda^\circ}' \cap {\mu^\circ}'| - |{\nu^\circ}'| \leq |\lambda^\circ \cap \mu^\circ| - |\nu^\circ|$
    \end{enumerate}
\end{defn}

\begin{defn}
    For $\sigma \subseteq \bbN^n$ any subset, define the socle elements of $\sigma$ to be $$\Soc(\sigma) = \{v \in \sigma: v + e_1, \cdots, x + e_n \not\in \sigma\}.$$
    Assume now $n = 2$. For $i \in \bbN$, we define the $i$th row of $\sigma$ to be
    \begin{align*}
        \row_i(\sigma) &= \{(v_1, v_2) \in \sigma : v_2 = i\}.
    \end{align*}
    Also, define the height of $\sigma$ to be 
    \begin{align*}
        H(\sigma) = \max\{j \in \bbN: (0,j) \in \sigma\}.
    \end{align*}
    Let $\pi_1,\pi_2 :\bbN^2 \to \bbN$ be the projections onto the first and second components respectively.
\end{defn}

\begin{exa}
    Take the right-free configuration $(\nu^\circ, b, c)$ as given in \Cref{section 5: reusable example}. Note that $H(\lambda^\circ) = 5$ and $\row_1(\lambda^\circ) = \varnothing$. For $1 \leq j \leq 3$, we define $$b_j' = \begin{cases}b_j - e_2, \pi_2(b_j) > 1 \\ b_j \text{ otherwise,}\end{cases}$$ yielding $b' = ((0, 3), (2, 1), (4, 0))$. This gives:
        \begin{center}
        \begin{tikzpicture}[caption=${\lambda^\circ}'$]
            \axestwo{6}{6}
            \emptybox{0}{4}{gray!20!white}
            \emptybox{1}{4}{gray!20!white}
            \emptybox{1}{3}{gray!20!white}
            \emptybox{2}{1}{gray!50!white}
            \emptybox{3}{1}{gray!50!white}
            \emptybox{2}{2}{gray!50!white}
            \emptybox{4}{0}{gray!80!white}
            \emptybox{0}{0}{white}
            \emptybox{0}{1}{white}
            \emptybox{0}{2}{white}
            \emptybox{0}{3}{white}
            \emptybox{1}{0}{white}
            \emptybox{2}{0}{white}
            \emptybox{3}{0}{white}
            \emptybox{1}{1}{white}
            \emptybox{1}{2}{white}
        \end{tikzpicture}
        \begin{tikzpicture}[caption=$\mu^\circ$]
            \axestwo{5}{5}
            \emptybox{0}{3}{gray!20!white}
            \emptybox{1}{3}{gray!20!white}
            \emptybox{1}{2}{gray!20!white}
            \emptybox{0}{1}{gray!50!white}
            \emptybox{1}{1}{gray!50!white}
            \emptybox{0}{2}{gray!50!white}
            \emptybox{0}{0}{gray!80!white}
            \emptybox{1}{0}{white}
        \end{tikzpicture}
    \end{center}
    Observe that $(\nu^\circ, b', c)$ is still a right-free configuration, that $\row_1({\lambda^\circ}') \neq \varnothing$, and $(\nu^\circ, b', c) < (\nu^\circ, b, c)$.
\end{exa}

\begin{lem}\label{LEM_tedious_but_useful}
    Suppose $(\nu^\circ, b, c)$ is a right-free configuration with small intersection, and suppose that it is a minimal such configuration under the order defined in \Cref{Section 5: poset order}. Then the following hold:
    \begin{enumerate}[(a)]
        \item $\Soc(\lambda^\circ) \cap \mu^\circ = \varnothing$ and $\Soc(\mu^\circ) \cap \lambda^\circ = \varnothing$
        \item For $r, s \in \bbN$ with $r \leq H(\lambda^\circ)$ and $s \leq H(\mu^\circ)$, we have 
        \[
        \row_r(X), \row_s(Y)\neq \varnothing,
        \]
        where $$X = \bigsqcup_{j = 1}^\ell (\nu_j^\circ + b_j) \text{ and } Y = \bigsqcup_{j = 1}^\ell (\nu_j^\circ + c_j).$$
    \end{enumerate}
\end{lem}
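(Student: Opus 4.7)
The plan is to prove both parts by contradiction: assuming the stated conclusion fails, I would construct a strictly smaller right-free configuration (with respect to the order in \Cref{Section 5: poset order}) that still has small intersection, violating minimality. The two parts require different reduction moves.

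For (b), I suppose $\row_r(X) = \varnothing$ for some $r \leq H(\lambda^\circ)$; the $\row_s(Y)$ case is symmetric. Since each $\nu_j^\circ + b_j$ is a connected subset of $\bbN^2$, a path-connectedness argument along the lines of \Cref{REM_PATHANNOYING} shows that the set of rows it occupies is an integer interval, so each piece lies entirely above or entirely below row $r$. I would define $b'_j = b_j - e_2$ on the ``above'' pieces and $b'_j = b_j$ otherwise. The routine checks are that $b'_j \in \bbN^2$ (since any above piece has $\pi_2(b_j) > r \geq 0$), that pairwise disjointness of the shifted pieces is preserved, and that $\leq_{b'}$-incomparability persists; all three follow from the one-row buffer between the above and below groups that survives the shift. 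Some element of $X$ must reach $\pi_2 = H(\lambda^\circ)$, and since $\row_r(X) = \varnothing$ forces $H(\lambda^\circ) > r$, this element lies in an above piece and gets shifted down, so $(0, H(\lambda^\circ)) \notin {\lambda^\circ}'$ and hence ${\lambda^\circ}' \subsetneq \lambda^\circ$. Since $\mu^\circ$ and $\nu^\circ$ are unchanged, this yields the required strictly smaller right-free configuration with small intersection.

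For (a), I suppose $v \in \Soc(\lambda^\circ) \cap \mu^\circ$; the other case is symmetric. The socle condition $v + e_1, v + e_2 \notin \lambda^\circ$ prevents $v$ from being strictly dominated by any element of $X$, so $v \in X$ itself. Writing $v = w + b_j$ with $w \in \nu_j^\circ$, one checks that $w$ is a socle of $\nu_j^\circ$. My construction partitions $\nu_j^\circ \setminus \{w\}$ into connected components $\tau_1, \ldots, \tau_m$, replaces $\nu_j^\circ$ in the sequence by the abstract skew shapes $\tau_i - \wedge \tau_i$, and adjusts the corresponding entries of $b$ and $c$ by $\wedge \tau_i$ to preserve images. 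The step I expect to be the main obstacle is checking incomparability of the split components under $\leq_{b'}$: if $u_1 \in \tau_{i_1}$ and $u_2 \in \tau_{i_2}$ with $i_1 \neq i_2$ and $u_1 \leq u_2$, then by \Cref{LEM_CONTAINRECTS} the rectangle $[u_1, u_2]$ lies in $\nu_j^\circ$, and if $w$ were in this rectangle (necessarily strictly between $u_1$ and $u_2$) then some $w + e_i$ would also lie in it, contradicting $w \in \Soc(\nu_j^\circ)$; hence $w \notin [u_1, u_2]$, and the rectangle connects $u_1$ and $u_2$ in $\nu_j^\circ \setminus \{w\}$, contradicting distinctness of components.

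To finish (a), I verify the poset conditions. From $X' = X \setminus \{v\}$ and the socle property of $v$, I would show ${\lambda^\circ}' \subseteq \lambda^\circ \setminus \{v\}$: if some $x' \in X'$ dominated $v$ then $v + e_i \leq x'$ for some $i$, violating $v \in \Soc(\lambda^\circ)$. Since $v \in \mu^\circ$ and ${\mu^\circ}' \subseteq \mu^\circ$, this gives $|{\lambda^\circ}' \cap {\mu^\circ}'| \leq |\lambda^\circ \cap \mu^\circ| - 1$, while $|{\nu^\circ}'| = |\nu^\circ| - 1$; hence condition (iii) of the poset order holds and condition (ii) is strict, so the new configuration is strictly smaller and retains small intersection, contradicting minimality.
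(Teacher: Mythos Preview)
Your proposal is correct and follows the same strategy as the paper: for (b), both you and the paper shift every piece with $\pi_2(b_j)>r$ down by $e_2$ and check that incomparability survives because the one-row gap persists; for (a), both delete the offending socle element from its piece and observe that $|\lambda^\circ\cap\mu^\circ|$ and $|\nu^\circ|$ each drop by one. The only substantive difference is that you are more careful in (a): after deleting $w$ you explicitly split $\nu_j^\circ\setminus\{w\}$ into connected components and verify, via the rectangle argument with \Cref{LEM_CONTAINRECTS}, that no two components are $\leq_{b'}$-comparable. The paper simply writes ``forming ${\nu^\circ}'$ by removing $v-b_j$ from $\nu^\circ_j$'' and asserts the result is a right-free configuration, eliding the connectivity issue; your treatment fills that gap.
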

\begin{proof}
Suppose for the sake of contradiction that $\Soc(\lambda^\circ) \cap \mu^\circ \neq \varnothing$. Observe that if $v$ is in the intersection, then $v \in \Soc(\lambda^\circ)$ implies $v \in \Soc(\nu^\circ_j + b_j)$ for some $j \in \ell$ by construction of $\lambda^\circ$. Then, forming ${\nu^\circ}'$ by removing $v - b_j$ from $\nu^\circ_j$, observe that $({\nu^\circ}', b, c)$ is a right-free configuration satisfying \Cref{Section 5: poset order} \textit{(i)} and \textit{(ii)}. \Cref{Section 5: poset order} \textit{(iii)} follows immediately from the observations that $|{\lambda^\circ}' \cap {\mu^\circ}'| = |\lambda^\circ \cap \mu^\circ| - 1$ and $|{\nu^\circ}'| = |\nu^\circ| - 1$.

To show \textit{(b)}, we will prove that $\row_r(X) \neq \varnothing$ for all $r \leq H(\lambda^\circ)$. The other statement follows by a weaker argument. Suppose towards a contradiction that there exists $r \leq H(\lambda^\circ)$ for which $\row_r(X) = \varnothing$. Define $$b_j' = \begin{cases}b_j - e_2, \pi_2(b_j) > r \\ b_j \text{ otherwise.}\end{cases}$$

We claim that $(\nu^\circ, b', c)$ is a right-free configuration. Indeed, as $(\nu^\circ, b, c)$ is a right-free configuration, the only way for $(\nu^\circ, b', c)$ to not be a right-free configuration is if there were some $i, j \in [\ell]$, $\pi_2(b_i) \leq r$, $\pi_2(b_j) > r$ for which $\nu_i^\circ$ and $\nu_j^\circ$ are comparable under $\leq_{b'}$. Suppose this were the case. Take $u \in \nu_i^\circ + b_i'$ and $v \in \nu_j^\circ + b_j'$ comparable under $\leq$. By definition, $b_i' = b_i$ and $b_j' = b_j - e_2$. Clearly, as $\row_r(X) = \varnothing$, $\pi_2(b_i) < r$. Then, as $\nu_i^\circ$ is a connected abstract skew shape, it follows that $\pi_2(u) < r$ too. Similarly, $\pi_2(v) \geq \pi_2(b_j') = \pi_2(b_j) - 1 \geq r$. Hence, for $u$ and $v$ to be comparable under $\leq$, it must be that $u \leq v$, so $\pi_1(u) \leq \pi_1(v)$. But now observe that $\pi_2(u) < \pi_2(v) < \pi_2(v + e_2)$, so $u \leq v + e_2$, but $u \in \nu_i^\circ + b_i$ and $v + e_2 \in \nu_j^\circ + b_j$, a contradiction.

Let ${\lambda^\circ}'$ be the closure under $\leq$ of $\nu^\circ_1 + b'_1 \sqcup \cdots \sqcup \nu^\circ_\ell + b'_\ell$. As $\lambda^\circ$ is the closure under $\leq$ of $\nu_1^\circ + b_1, \cdots, \nu_\ell^\circ + b_\ell$, there exists $j \in [\ell]$ for which with $\pi_2(b_j) > r$, which implies that ${\lambda^\circ}' \subset \lambda^\circ$. Then, ${\lambda^\circ}' \cap \mu^\circ \subseteq  \lambda^\circ \cap \mu^\circ$, meaning $$|{\lambda^\circ}' \cap \mu^\circ| - |{\nu^\circ}| \leq |\lambda^\circ \cap \mu^\circ| - |\nu^\circ|,$$ and so $(\nu^\circ, b', c) < (\nu^\circ, b, c)$ is of small intersection, contradicting minimality.
\end{proof}

\begin{lem}
   Suppose $(\nu^\circ, b, c)$ is a right-free configuration with small intersection, and suppose that it is a minimal such configuration under the order defined in \Cref{Section 5: poset order}. Then, 
    $$H(\lambda^\circ) = \sum_{j=1}^\ell H(\nu_j^\circ).$$
    Moreover, $H(\mu^\circ) \leq  H(\lambda^\circ)$.
\end{lem}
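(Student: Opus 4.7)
The plan is to reduce the height statement to an elementary row-counting problem, analyzing how the shifted pieces $\nu_j^\circ + b_j$ are stacked within $\lambda^\circ$ and combining the structural rigidity coming from right-freeness with the coverage guarantee from \Cref{LEM_tedious_but_useful}.

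First, I would prove that the pieces $\nu_1^\circ + b_1, \ldots, \nu_\ell^\circ + b_\ell$ have pairwise disjoint row spans. The key observation is that any two distinct lattice points in $\bbN^2$ sharing a $y$-coordinate are always comparable under $\leq$, since the one with the smaller $x$-coordinate is strictly below the other. Because right-freeness rules out any element of $\nu_i^\circ + b_i$ being $\leq$-comparable with any element of $\nu_j^\circ + b_j$ for $i \neq j$, two distinct pieces cannot share even a single row. Next, I would verify that each $\nu_j^\circ$ has row span equal to the full contiguous interval $[0, H(\nu_j^\circ)]$: abstractness yields some element in row $0$, the definition of $H$ yields $(0, H(\nu_j^\circ)) \in \nu_j^\circ$, and any $\{\pm e_1, \pm e_2\}$-path joining these points must pass through every intermediate $y$-value. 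So $\nu_j^\circ + b_j$ occupies exactly $H(\nu_j^\circ) + 1$ consecutive rows starting at $\pi_2(b_j)$. Combining these two observations with part (b) of \Cref{LEM_tedious_but_useful}, which forces every row $r \in [0, H(\lambda^\circ)]$ to meet some piece, the row spans of the pieces must partition $[0, H(\lambda^\circ)]$, and counting rows produces the claimed identity.

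For the inequality $H(\mu^\circ) \leq H(\lambda^\circ)$, I would run an analogous row-counting analysis on the $c$-side. There the shifted pieces $\nu_j^\circ + c_j$ are only required to be pairwise disjoint rather than $\leq_c$-incomparable, so their row spans may overlap; this is why we get an inequality rather than an equality. Still, applying (b) to $\mu^\circ$ produces a cover of $[0, H(\mu^\circ)]$ by these row spans with total size $\sum_j (H(\nu_j^\circ) + 1)$, forcing $H(\mu^\circ) + 1 \leq \sum_j (H(\nu_j^\circ) + 1)$; combined with the identity established for $H(\lambda^\circ)$, this immediately yields the desired inequality. The main technical obstacle is the first step — extracting the disjointness of row spans from the right-freeness hypothesis — since it is where the abstract incomparability condition under $\leq_b$ must be translated into a concrete geometric constraint on row occupation; everything afterward is elementary bookkeeping.
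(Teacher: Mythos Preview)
Your proposal is correct and follows essentially the same route as the paper: both arguments use \Cref{LEM_tedious_but_useful}(b) to see that the rows of $\lambda^\circ$ (resp.\ $\mu^\circ$) are exactly those hit by the shifted pieces, then use incomparability under $\leq_b$ to conclude that on the $b$-side these row spans are pairwise disjoint (your observation that two points in the same row are automatically $\leq$-comparable is precisely what underlies the paper's step $\sum_j|\pi_2(\nu_j^\circ+b_j)|=|\pi_2(\bigcup_j(\nu_j^\circ+b_j))|$), while on the $c$-side mere disjointness only gives the subadditivity inequality. The paper phrases the count via $|\pi_2(\cdot)|$ rather than your ``number of rows $=H+1$'' language, but the content is identical.
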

\begin{proof}
First, observe that
$$\pi_2\left(\bigcup_{j=1}^\ell (\nu_j^\circ + b_j)\right)\subseteq \pi_2(\lambda^\circ) \text{ and } \pi_2\left(\bigcup_{j=1}^\ell (\nu_j^\circ + c_j)\right)\subseteq \pi_2(\mu^\circ).$$
We claim these are equalities. Indeed, suppose the first inequality were strict. Take $$r \in \pi_2(\lambda^\circ) \smallsetminus \pi_2\left(\bigcup_{j=1}^\ell (\nu_j^\circ + b_j)\right).$$ Then, $r \leq H(\lambda^\circ)$ but $$\row_r\left(\bigcup_{j=1}^\ell (\nu_j^\circ + b_j)\right) = \varnothing,$$ contradicting \Cref{LEM_tedious_but_useful} \textit{(b)}. The second equality follows identically. Now:
\begin{align*}
    \sum_{j=1}^\ell H(\nu_i^\circ) &= \sum_{j=1}^\ell |\pi_2(\nu_j^\circ)| \text{ as $\nu_1^\circ, \cdots, \nu_\ell^\circ$ are connected abstract skew shapes}\\
    &=\sum_{j=1}^\ell |\pi_2(\nu_j^\circ + b_j)|\\
    &=\left|\pi_2\left(\bigcup_{j=1}^\ell \nu_j^\circ + b_j\right)\right| \text{ as $\nu_1^\circ + b_1, \cdots, \nu_\ell^\circ + b_\ell$ are incomparable under $\leq_b$}\\
    &= |\pi_2(\lambda^\circ)| \text{ by the observation above} \\ &= H(\lambda^\circ) \text{ as $\lambda^\circ$ is connected}.
\end{align*}

Then, arguing similarly,
$$H(\mu^\circ)= |\pi_2(\mu^\circ)| = \left|\pi_2\left(\bigcup_{j=1}^\ell (\nu_j^\circ + c_j)\right)\right| \leq \sum_{j=1}^\ell |\pi_2(\nu_j^\circ + c_j)| = \sum_{j=1}^\ell |\pi_2(\nu_j^\circ| = H(\lambda^\circ),$$ as desired.
\end{proof}

\begin{lem}\label{Section 5: intersection of top row of mu}
     Suppose $(\nu^\circ, b, c)$ is a right-free configuration with small intersection, and suppose that it is a minimal such configuration under the order defined in \Cref{Section 5: poset order}. Then, $$\row_{H(\mu^\circ)}(\lambda^\circ) \subseteq \row_{H(\mu^\circ)}(\mu^\circ).$$ In particular, $$|\row_{H(\mu^\circ)}(\lambda^\circ \cap \mu^\circ)| = |\row_{H(\mu^\circ)}(\lambda^\circ)|.$$
\end{lem}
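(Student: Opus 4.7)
Let $h = H(\mu^\circ)$. My plan is to argue by contradiction using only the fact that $\lambda^\circ$ and $\mu^\circ$ are standard shapes (so closed downward) together with part (a) of Lemma \ref{LEM_tedious_but_useful}, which tells us that $\Soc(\mu^\circ) \cap \lambda^\circ = \varnothing$. No further use of minimality should be necessary beyond invoking that lemma.

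First, I would record the shape of the two rows in question. Since $\mu^\circ$ is a standard shape, $\row_h(\mu^\circ) = \{(0,h),(1,h),\dots,(w_\mu - 1,h)\}$ where $w_\mu = |\row_h(\mu^\circ)| \geq 1$ (nonzero because $(0,h)\in\mu^\circ$ by definition of $H$). Similarly, $\row_h(\lambda^\circ)$ is of the form $\{(0,h),\dots,(w_\lambda - 1,h)\}$ for some $w_\lambda \geq 0$. The containment $\row_h(\lambda^\circ) \subseteq \row_h(\mu^\circ)$ is then equivalent to the numerical inequality $w_\lambda \leq w_\mu$.

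Next, I identify a socle element of $\mu^\circ$ sitting at the right end of its top row. Consider $v = (w_\mu - 1, h)$. By construction $v + e_1 = (w_\mu,h) \notin \mu^\circ$. For $v + e_2 = (w_\mu-1, h+1)$: if this were in $\mu^\circ$, then by downward closure $(0, h+1) \in \mu^\circ$, contradicting $h = H(\mu^\circ)$. Hence $v \in \Soc(\mu^\circ)$, so by Lemma \ref{LEM_tedious_but_useful}(a), $v \notin \lambda^\circ$. Suppose toward contradiction that $w_\lambda > w_\mu$; then $(w_\mu, h) \in \lambda^\circ$, and downward closure of $\lambda^\circ$ forces $(w_\mu - 1, h) = v \in \lambda^\circ$, a contradiction. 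Therefore $w_\lambda \leq w_\mu$, proving $\row_h(\lambda^\circ) \subseteq \row_h(\mu^\circ)$.

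The final statement $|\row_h(\lambda^\circ \cap \mu^\circ)| = |\row_h(\lambda^\circ)|$ is then immediate: since $\row_h(\lambda^\circ) \subseteq \row_h(\mu^\circ)$, we have $\row_h(\lambda^\circ \cap \mu^\circ) = \row_h(\lambda^\circ) \cap \row_h(\mu^\circ) = \row_h(\lambda^\circ)$. The only subtle point in the whole argument — and the part I would double-check carefully — is verifying that $v + e_2 \notin \mu^\circ$ to conclude $v$ is a socle element; this relies precisely on the definition $H(\mu^\circ) = \max\{j : (0,j)\in\mu^\circ\}$ combined with downward closure of $\mu^\circ$, and I do not expect any genuine obstacle.
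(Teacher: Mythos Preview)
Your proof is correct and follows essentially the same approach as the paper: both identify the rightmost element of $\row_{H(\mu^\circ)}(\mu^\circ)$ as a socle element of $\mu^\circ$, use \Cref{LEM_tedious_but_useful}(a) to conclude it lies outside $\lambda^\circ$, and deduce the containment from the fact that rows of standard shapes are initial intervals. The paper phrases the last step as ``one of the two containments must hold, and it cannot be $\row_{H(\mu^\circ)}(\mu^\circ) \subseteq \row_{H(\mu^\circ)}(\lambda^\circ)$,'' while you compare the widths $w_\lambda, w_\mu$ explicitly, but the argument is the same.
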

\begin{proof}
    Note that at least one of $\row_{H(\mu^\circ)}(\lambda^\circ) \subseteq \row_{H(\mu^\circ)}(\mu^\circ)$ and $\row_{H(\mu^\circ)}(\mu^\circ) \subseteq \row_{H(\mu^\circ)}(\lambda^\circ)$ must be true. Take $p \in \row_{H(\mu^\circ)}(\mu^\circ)$ a maximal element. Then, $p \in \Soc(\mu^\circ)$, so by \Cref{LEM_tedious_but_useful} \textit{(a)}, $p \not\in \lambda^\circ$. Thus, $\row_{H(\mu^\circ)}(\mu^\circ) \not\subseteq \row_{H(\mu^\circ)}(\lambda^\circ)$, and hence $\row_{H(\mu^\circ)}(\lambda^\circ) \subseteq \row_{H(\mu^\circ)}(\mu^\circ)$.
\end{proof}

\begin{thm}\label{Section 5: main}
    There are no right-free configurations of small intersection.
\end{thm}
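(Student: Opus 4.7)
The plan is a proof by contradiction. Suppose a right-free configuration with small intersection exists, and pick one $(\nu^\circ, b, c)$ that is minimal under the partial order of \Cref{Section 5: poset order}. The preceding lemmas then apply, yielding the socle conditions $\Soc(\lambda^\circ) \cap \mu^\circ = \Soc(\mu^\circ) \cap \lambda^\circ = \varnothing$, the height identity $H(\lambda^\circ) = \sum_j H(\nu_j^\circ) \geq H(\mu^\circ)$, and the top-row containment $\row_{H(\mu^\circ)}(\lambda^\circ) \subseteq \row_{H(\mu^\circ)}(\mu^\circ)$.

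My strategy is to ``peel off'' the top row of $\mu^\circ$ to obtain a strictly smaller right-free configuration that also has small intersection, contradicting minimality. Setting $r = H(\mu^\circ)$ and $T = \{j : \pi_2(c_j) + H(\nu_j^\circ) = r\}$ (the indices of the shapes whose $c$-shift reaches the top of $\mu^\circ$), I would remove the top row $\row_{H(\nu_j^\circ)}(\nu_j^\circ)$ from each $\nu_j^\circ$ with $j \in T$. This may decompose $\nu_j^\circ$ into several connected components, each of which I normalize to an abstract skew shape with meet at origin and with $b', c'$ inherited by appropriate translation; for $j \notin T$, the data is unchanged. The new configuration $({\nu^\circ}', b', c')$ is still right-free since only cells are removed, so incomparability under $\leq_{b'}$ and disjointness under $c'$ are preserved.

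The verifications are that $({\nu^\circ}', b', c')$ is (a)~strictly less than $(\nu^\circ, b, c)$ in \Cref{Section 5: poset order} and (b)~still has small intersection. The cells removed from $\nu^\circ$ total $|\row_r(\mu^\circ)|$, while the cells lost from $\lambda^\circ \cap \mu^\circ$ consist of $\row_r(\lambda^\circ)$ (by \Cref{Section 5: intersection of top row of mu}) plus possibly additional cells of $\mu^\circ$ below row $r$ whose support came exclusively from the removed top rows of the $\nu_j^\circ$ for $j \in T$. Quantifying these deeper losses---which depend on how cells of $\sqcup_j (\nu_j^\circ + c_j)$ below row $r$ are supported after the reduction---is what drives the accounting.

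The main obstacle will be ensuring condition~(iii) of \Cref{Section 5: poset order}, which requires the drop in $|\lambda^\circ \cap \mu^\circ|$ to be at least the drop in $|\nu^\circ|$. \Cref{Section 5: intersection of top row of mu} only yields $\row_r(\lambda^\circ) \subseteq \row_r(\mu^\circ)$; in fact \Cref{LEM_tedious_but_useful}(a) upgrades this to strict containment (the rightmost cell of $\row_r(\mu^\circ)$ is in $\Soc(\mu^\circ)$ and hence not in $\lambda^\circ$), but this strict inequality runs in the wrong direction for a naive count. The proof must therefore leverage the deeper losses propagating through the closure operation to absorb the deficit, and simultaneously handle the technicality that removing a top row may fragment some $\nu_j^\circ$ into multiple components with their own inherited shifts. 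If condition~(iii) cannot be directly forced by peeling a full row, a refinement of the construction---for instance, adjusting $c'$ downward on components that decouple from row $r$---will likely be required to close the argument.
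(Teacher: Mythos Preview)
Your framework---assume a minimal configuration, apply the preparatory lemmas, then reduce---matches the paper, but the reduction you propose is not the one that works, and you have correctly located the failure point without resolving it.

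The paper does \emph{not} always peel the top row of $\mu^\circ$. Instead it first proves an auxiliary claim: for every row $r \le H(\mu^\circ)$ there is some $j \in \Theta_r$ (the indices whose $c$-shift meets row $r$ of $\mu^\circ$) with $r - \pi_2(c_j) + \pi_2(b_j) < H(\mu^\circ)$. If this claim holds, the assignment $r \mapsto r - \pi_2(c_j) + \pi_2(b_j)$ is an injection $[H(\mu^\circ)] \to [H(\mu^\circ)-1]$ (injectivity follows from incomparability under $\leq_b$), an immediate contradiction. The reduction step is invoked only when the claim \emph{fails} for some particular row $r$: then for \emph{every} $j \in \Theta_r$ the corresponding row of $\nu_j^\circ + b_j$ sits at height $\ge H(\mu^\circ)$ in $\lambda^\circ$. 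Incomparability under $\leq_b$ then forces
\[
\sum_{j\in\Theta_r}|\row_{r-\pi_2(c_j)}(\nu_j^\circ)| \;\le\; |\row_{H(\mu^\circ)}(\lambda^\circ)| \;=\; |\row_{H(\mu^\circ)}(\lambda^\circ\cap\mu^\circ)|,
\]
which is exactly the inequality needed for condition~(iii). The paper then removes row $r-\pi_2(c_j)$ from each $\nu_j^\circ$ with $j\in\Theta_r$ (not the top row of $\mu^\circ$ in general) and shifts the $c$-data of shapes lying strictly above row $r$ down by $e_2$; this produces a strictly smaller right-free configuration of small intersection.

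Your strategy of always removing the top row of $\mu^\circ$ lacks precisely the hypothesis ``all $j\in\Theta_r$ land high in $\lambda^\circ$'' that makes the bookkeeping close. The ``deeper losses'' you hope for in ${\mu^\circ}'$ need not materialize: one can arrange configurations where removing the top row of $\mu^\circ$ drops $|\lambda^\circ\cap\mu^\circ|$ by strictly less than $|\nu^\circ|$ drops, so condition~(iii) genuinely fails. The missing idea is the pigeonhole dichotomy that either hands you the impossible injection or singles out a row $r$ for which the counting is guaranteed to work.
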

\begin{proof}
    Suppose otherwise. Take $(\nu^\circ, b, c)$ is a minimal right-free configuration with small intersection under the order defined in \Cref{Section 5: poset order}. For each row index $r \in [H(\mu^\circ)]$, let $\Theta_r \subseteq [\ell]$ be the set of indices $j$ for which $\nu_j + c_j$ is present in the $r$th row of $\mu^\circ$; i.e., for all $j \in \Theta_r$, $\row_r(\nu_j^\circ + c_j) \neq \varnothing$, and $$\row_r\left(\bigcup_{j=1}^\ell (\nu_j^\circ + c_j)\right) = \bigcup_{j \in \Theta_r} \row_r(\nu_j^\circ + c_j).$$ Note too that $\row_r(\nu_j^\circ + c_j) = \row_{r - \pi_2(c_j)}(\nu_j^\circ) + c_j$.
    
    We claim that for all rows $r \in [H(\mu^\circ)]$, there exists a $j \in \Theta_r$ such that $r - \pi_2(c_j) + \pi_2(b_j) < H(\mu^\circ)$. 

    We show first that the claim suffices. Suppose the claim holds. Then, there exists a function $f:[H(\mu^\circ)] \to [H(\mu^\circ) -1]$ given by $f(r) = r - \pi_2(c_j) + \pi_2(b_j)$ for choices of $j \in \Theta_r$. We will show that $f$ is an injection, a direct contradiction. Suppose $f(r) = f(s)$; that is, for some $i \in \Theta_r$ and $j \in \Theta_s$, we have $r - \pi_2(c_i) + \pi_2(b_i) = s - \pi_2(c_j) + \pi_2(b_j)$. Call this common value $t$. Then, as $i \in \Theta_r$, $\row_r(\nu^\circ_i + c_i)$ is nonempty, so $\row_{r - \pi_2(c_i)}(\nu^\circ_i)$ is nonempty, so $\row_t(\nu^\circ_i + b_i)$ is nonempty. Similarly, $\row_t(\nu^\circ_j + b_j)$ is nonempty, so by incomparability under $\leq_b$, it follows that $i = j$, and hence $r = s$. Thus, $f$ is injective, and we are done.
    
    We turn to the proof of the claim. Suppose it doesn't hold and take $r \in [H(\mu^\circ)]$ such that for all $j \in \Theta_r$, $r - \pi_2(c_j) + \pi_2(b_j) \geq H(\mu^\circ)$. Observe that $$H(\mu^\circ) \leq r - \pi_2(c_j) + \pi_2(b_j) \leq H(\lambda^\circ),$$ as the $(r - \pi_2(c_j))$th row of $\nu_j^\circ$ is nonempty by construction of $\Theta_r$, and hence the $(r - \pi_2(c_j) + \pi_2(b_j))$th row of $\nu_j^\circ + b_j$ in $\lambda^\circ$ is nonempty, and $\lambda^\circ$ contains at least the closure of $\nu_j^\circ + b_j$ under $\leq$.

    Then, as $\lambda^\circ$ is a standard shape, we have that $$|\row_{r - \pi_2(c_j)}(\nu_j^\circ) + b_j| = |\row_{r - \pi_2(c_j) + \pi_2(b_j)}(\nu_j^\circ + b_j)| \leq |\row_{H(\mu^\circ)}(\lambda^\circ)|.$$ In fact, because $\nu_1^\circ, \cdots, \nu_\ell^\circ$ are incomparable under $\leq_b$, the sum over all $j \in \Theta_r$ of the $|\row_{r - \pi_2(c_j) + \pi_2(b_j)}(\nu_j^\circ + b_j)|$ must still be less than or equal to $|\row_{H(\mu^\circ)}(\lambda^\circ)|$, meaning
    \begin{align*}
         \sum_{j \in \Theta_r}|\row_{r - \pi_2(c_j)}(\nu_j^\circ)|&= \sum_{j \in \Theta_r}|\row_{r - \pi_2(c_j)}(\nu_j^\circ) + b_j|\\
         &\leq |\row_{H(\mu^\circ)}(\lambda^\circ)|\\
         &= |\row_{H(\mu^\circ)}(\lambda^\circ \cap \mu^\circ)| \text{ by \Cref{Section 5: intersection of top row of mu}.}
    \end{align*}

    To achieve a contradiction, we will construct another right-free configuration $({\nu^\circ}',b',c')$ of small intersection that is strictly smaller than $(\nu^\circ,b,c)$. In order to do so, we remove the $(r - \pi_2(c_j))$th row from each connected abstract skew shape $\nu_j^\circ$ for $j \in \Theta_r$. Let us formalize this process.
    
    For $j \in \Theta_r$, recall that we may write $\nu_j^\circ = \overline{\nu}_j^\circ \smallsetminus \underline{\nu}_j^\circ$ for standard shapes $\overline{\nu}_j^\circ, \underline{\nu}_j^\circ$. Observe that a standard shape $\sigma \subseteq \bbN^2$ can be represented by a nonincreasing sequence of natural numbers $(\alpha_0, \cdots, \alpha_m)$, where $\alpha_0 = |\row_0(\sigma)|, \cdots, \alpha_m = |\row_m(\sigma)|$. Say $\overline{\nu}_j^\circ$ has row sequence $(\alpha_0,\cdots,\alpha_{m_j})$ and $\underline{\nu}_j^\circ$ has row sequence $(\beta_0,\cdots,\beta_{m_j})$. Define the standard shapes $\overline{\rho}_j$ and $\underline{\rho}_j$ by the row sequences
    \begin{align*}
        (\alpha_0,\cdots,\alpha_{r-\pi_2(c_j)-1},\alpha_{r-\pi_2(c_j)+1},\cdots,\alpha_{m_j})& \text{ and}\\
        (\beta_0,\cdots,\beta_{r-\pi_2(c_j)-1},\beta_{r-\pi_2(c_j)+1},\cdots,\beta_{m_j})& \text{ respectively}.
    \end{align*}
    From this, define the skew shapes $$\rho_j = \begin{cases}\overline{\rho}_j \smallsetminus \underline{\rho}_j, j \in \Theta_r \\ \nu_j^\circ \text{ otherwise.}\end{cases}$$
    
    Observe that $\rho_j$ is precisely $\nu_j^\circ$ with the $(r - \pi_2(c_j))$th row removed, as desired, and that in particular, $\rho_j \subseteq \nu_j^\circ$. Let us now construct $({\nu^\circ}',b',c')$. Borrowing familiar notation from \Cref{DEF_SLICING_a} and \Cref{DEF_SLICING_b}, for $j \in [\ell]$, let $$\Sigma_j = \{\tau - \wedge\tau : \tau \text{ is a connected component of } \rho_j\}.$$ Note that for $j \not\in \Theta_r$, $\Sigma_j = \{\nu_j^\circ\}$. Let ${\nu^\circ}'$ be a sequence given an arbitrary enumeration of the set $\bigcup_{j = 1}^\ell\Sigma_j$. With $\ell' = |{\nu^\circ}'|$, let $\eta : [\ell'] \to [\ell]$ be such that for each $j \in [\ell']$, ${\nu^\circ_j}' \in \Sigma_{\eta(j)}$. Also, for each $j \in [\ell']$, let $\tau_j$ be the connected component of $\rho_{\eta(j)}$ for which $\tau_j - \wedge\tau_j = {\nu^\circ_j}'$. Note too that for $\eta(j) \not\in \Theta_r$, ${\nu^\circ_j}' = \tau_j = \nu^\circ_j$ and $\wedge\tau_j = 0$.

    Before proceeding, we define another set of indices: let $\Gamma_r \subseteq [\ell]$ be such that for all $j \in \Gamma_r$, $\pi_2(c_j) > r$. Note that $\Gamma_r$ and $\Theta_r$ are disjoint: indeed, it cannot be that both $\pi_2(c_j) > r$ and $\row_r(\nu_j^\circ + c_j) \neq \varnothing$. Now, we may finally define for $j \in [\ell']$:
    $$b_j' = b_{\eta(j)} + \wedge\tau_j, \text{ and}$$ $$c_j' = \begin{cases}
        c_{\eta(j)} + \wedge\tau_j, \eta(j) \not\in \Gamma_r \\
        c_{\eta(j)} + \wedge\tau_j - e_2 = c_{\eta(j)} - e_2 \text{ otherwise.}
    \end{cases}$$
    
    We show that $({\nu^\circ}',b',c')$ is a right-free configuration. It is not hard to see that as the $\nu^\circ_j$ are incomparable under $\leq_b$ for $j \in [\ell]$, so too are the ${\nu^\circ_j}'$ under $\leq_{b'}$ for $j \in [\ell']$. It remains to verify that ${\nu^\circ_i}' + c_i'$ and ${\nu^\circ_j}' + c_j'$ are disjoint for $i, j \in [\ell']$, $i \neq j$. Again, it is clear that the only nontrivial case is when $\eta(i) \in \Gamma_r$, $\eta(j) \not\in \Gamma_r$. Suppose this is the case and suppose for the sake of contradiction that they intersect. Unfolding definitions, we obtain $$(\nu^\circ_{\eta(i)} + c_{\eta(i)} - e_2) \cap ({\nu^\circ_j}' + c_j') \neq \varnothing.$$

    If $\eta(j) \not\in \Theta_r$, then the above becomes $$(\nu^\circ_{\eta(i)} + c_{\eta(i)} - e_2) \cap (\nu^\circ_{\eta(j)} + c_{\eta(j)}) \neq \varnothing$$ But, $\row_s(\nu^\circ_{\eta(i)} + c_{\eta(i)} - e_2) = \varnothing$ for $s < \pi_2(c_{\eta(i)} - e_2) = \pi_2(c_{\eta(i)}) - 1$, which is to say in particular for $s < r$, as $\pi_2(c_{\eta(i)}) - 1 \geq r$ due to $\eta(i)$ being in $\Gamma_r$. On the other hand, $\nu^\circ_{\eta(j)}$ being connected, $\pi_2(c_{\eta(j)}) \leq r$, and $\row_r(\nu^\circ_{\eta(j)} + c_{\eta(j)}) = \varnothing$ altogether imply that $\row_s(\nu^\circ_{\eta(j)} + c_{\eta(j)}) = \varnothing$ for $s \geq r$, contradicting the intersection being nonempty.

    Thus, $\eta(j) \in \Theta_r$. Take $p\in\nu^\circ_{\eta(i)}$ and $q \in {\nu^\circ_j}'$ such that $p + c_{\eta(i)} -e_2 = q + c_j'$. Since $\pi_2(c_{\eta(i)}) > r$, we have $$\pi_2(q + c_j')=\pi_2(p + c_{\eta(i)} -e_2) \geq \pi_2(c_{\eta(i)}) - 1 \geq r,$$ implying $\pi_2(q + \wedge\tau_j) \geq r - \pi_2(c_{\eta(j)})$. But recall that by definition, ${\nu^\circ_j}' + \wedge\tau_j$ is a connected component of $\rho_{\eta(j)}$. Moreover, through the construction of $\rho_{\eta(j)}$, we know that an element $q + \wedge\tau_j$ of $\rho_{\eta(j)}$ with $\pi_2(q + \wedge\tau_j) \geq r - \pi_2(c_{\eta(j)})$ must have come from the row above in $\nu^\circ_{\eta(j)}$, which is to say $q + \wedge\tau_j + e_2 \in \nu^\circ_{\eta(j)}$.

    At last, observe that $$\nu^\circ_{\eta(i)} + c_{\eta(i)} \ni p + c_{\eta(i)} = q + c_j' + e_2 = q + \wedge\tau_j + e_2 + c_{\eta(j)} \in \nu^\circ_{\eta(j)} + c_{\eta(j)},$$ contradicting disjointedness. Hence, $({\nu^\circ}',b',c')$ is a right-free configuration.


    By construction, it is easy to see that ${\mu^\circ}'$ is contained in $\mu^\circ$ minus its top row; i.e., ${\mu^\circ}' \subseteq \mu^\circ \smallsetminus \row_{H(\mu^\circ)}(\mu^\circ)$, and so ${\mu^\circ}' \subset \mu^\circ$. Also evident is that ${\lambda^\circ}' \subseteq \lambda^\circ$. Additionally, $$|{\nu^\circ}'| = |\nu^\circ| - \sum_{j \in \Theta_r} |\row_{r - \pi_2(c_j)}(\nu^\circ_j)|.$$ Hence: \begin{align*}
        |{\lambda^\circ}' \cap {\mu^\circ}'| &\leq |\lambda^\circ \cap (\mu^\circ \smallsetminus \row_{H(\mu^\circ)}(\mu^\circ))|\\
        &= |\lambda^\circ \cap \mu^\circ| - |\row_{H(\mu^\circ)}(\lambda^\circ \cap \mu^\circ)| \\
        &\leq |\lambda^\circ \cap \mu^\circ| - \sum_{j \in \Theta_r}|\row_{r - \pi_2(c_j)}(\nu_j^\circ)| \\
        \implies |{\lambda^\circ}' \cap {\mu^\circ}'| - |{\nu^\circ}'| &< |\lambda^\circ \cap \mu^\circ| - |\nu^\circ| < 0.
    \end{align*}
    
    That is, $({\nu^\circ}',b',c') < (\nu^\circ,b,c)$ is a strictly smaller right-free configuration of small intersection, contradicting minimality. So, we have our initial claim.
\end{proof}

\begin{defn}
    Say a floor plan $(\nu, b, c)$ is right-free if $h_b(j) \leq 0$ for $j \in [\ell]$.
\end{defn}

\begin{rem}\label{LEM_rf}
    If $(\nu, b, c)$ is a right-free floor plan, then $(\nu^\circ, b, c)$ is a right-free configuration, where $\nu^\circ$ is the sequence $\nu^\circ_1, \cdots, \nu^\circ_\ell$.
\end{rem}

\begin{thm}
    A right-free floor plan is not a counterexmample.
\end{thm}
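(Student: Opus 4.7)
The plan is to proceed by induction on $|\nu| := |\nu_1| + \cdots + |\nu_\ell|$. The base case $|\nu| = 0$ corresponds to the empty floor plan, which satisfies $|\nu| = 0 = |\lambda \cap \mu|$ and hence is not a counterexample.

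For the inductive step, suppose toward a contradiction that $(\nu, b, c)$ is a right-free floor plan with $|\nu| \geq 1$ that is a counterexample. By \Cref{LEM_rf}, $(\nu^\circ, b, c)$ is a right-free configuration, so \Cref{Section 5: main} gives $|\nu^\circ| \leq |\lambda^\circ_{\mathrm{RF}} \cap \mu^\circ_{\mathrm{RF}}|$, where $\lambda^\circ_{\mathrm{RF}}, \mu^\circ_{\mathrm{RF}}$ denote the $2$-dimensional closures arising from the right-free configuration. The right-free hypothesis forces $\bm{b}_j = (b_j, 0)$ in the canonical realization, so the $2$-dimensional set $\nu_j^\circ + b_j$ sits inside the bottom slice of the $3$-dimensional $\lambda$; because this bottom slice is downward closed in $\bbN^2$, we obtain $\lambda^\circ_{\mathrm{RF}} \subseteq \lambda^\circ$ and analogously $\mu^\circ_{\mathrm{RF}} \subseteq \mu^\circ$. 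Combining, $|\nu^\circ| \leq |\lambda^\circ \cap \mu^\circ|$, so the hypothesis of \Cref{PROP_MAIN} is satisfied and the bottom slice reduction $(\nu^*, b^*, c^*)$ is also a counterexample.

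Right-freeness is preserved under bottom slice reduction: the proof of \Cref{THM_RED} establishes $h_{b^*}(j) \leq h_b(\eta(j))$ (and symmetrically for $c^*$), and since $h_b(\eta(j)) \leq 0$ while $h_{b^*}(j) \geq 0$ (witnessed by the trivial chain), we obtain $h_{b^*} \equiv 0$ as needed. Finally, $|\nu^*| = |\nu| - |\nu^\circ|$, and each $\nu_j^\circ$ is nonempty because $\wedge \nu_j = 0$ forces some element of $\nu_j$ to have third coordinate zero, so $|\nu^*| < |\nu|$; the inductive hypothesis applied to $(\nu^*, b^*, c^*)$ then yields the contradiction. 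The main subtlety is reconciling the two meanings of $\lambda^\circ$ — the $2$-dimensional closure from the right-free configuration versus the bottom slice of the $3$-dimensional $\lambda$ — but right-freeness collapses this gap in exactly the inclusion direction needed to feed the hypothesis of \Cref{PROP_MAIN}.
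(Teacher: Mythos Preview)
Your proof is correct and follows essentially the same route as the paper's: induct on $|\nu|$, pass to the right-free configuration via \Cref{LEM_rf}, invoke \Cref{Section 5: main} to verify the hypothesis of \Cref{PROP_MAIN}, then descend to the bottom slice reduction which remains right-free by the estimate $h_{b^*}(j) \leq h_b(\eta(j))$ from the proof of \Cref{THM_RED}. Your explicit treatment of the two meanings of $\lambda^\circ$ (the $2$-dimensional closure in the right-free configuration versus the bottom slice of the $3$-dimensional $\lambda$) and the verification that $\nu_j^\circ \neq \varnothing$ are details the paper leaves implicit, but they do not constitute a different approach.
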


\begin{proof}
    Suppose otherwise. If $|\nu| = 0$ then we are done. Let $(\nu, b, c)$ be a right-free floor plan that is a counterexample. Then, by \Cref{LEM_rf}, $(\nu^\circ, b, c)$ is a right-free configuration, so by \Cref{Section 5: main}, $|\nu^\circ| \leq |\lambda^\circ \cap \mu^\circ|$, and so by \Cref{PROP_MAIN}, the bottom slice reduction $(\nu^*, b^*, c^*)$ is also a counterexample. However, by the proof of \Cref{THM_RED}, $(\nu^*, b^*, c^*)$ is also a right-free floor plan, and by construction $|\nu^*| < |\nu|$, so repeating this process finitely many times, we will arrive at a contradiction, and so we are done.
\end{proof}

We are now equipped to work towards proving \Cref{BIGMAIN}:

\begin{lem}\label{LEM_indecdecomp}
    Let $I \subseteq K \subseteq S$ be monomial ideals of finite colength and let $\zeta$ be the corresponding skew shape to $K / I$. Let $\zeta_1, \cdots, \zeta_\ell$ be the connected components of $\zeta$, enumerated arbitrarily. Then, the $\bbN^3$-graded indecomposable decomposition of $K / I$ as an $S$-module is given by $P_1 \oplus \cdots \oplus P_\ell$, where for $j \in [\ell]$, $P_j$ is given by the $k$-span in $S / I$ of the monomials in $\zeta_j$.
\end{lem}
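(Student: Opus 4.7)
The plan is to establish two separate facts: first, that $P_1 \oplus \cdots \oplus P_\ell$ really is an internal $\bbN^3$-graded direct sum decomposition of $K/I$; and second, that each summand $P_j$ is $\bbN^3$-graded indecomposable. The connectedness of $\zeta_j$ under the $\pm e_i$-adjacency relation will be the key combinatorial input, and the $\bbN^3$-grading (under which each monomial spans its own graded piece) will be what reduces $S$-module-theoretic questions to combinatorial ones about $\zeta_j$.

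For the first step, I note that as a $k$-vector space, $K/I$ is spanned by the monomials $\{x^v : v \in \zeta\}$, and the $\zeta_j$ partition $\zeta$, so $K/I = P_1 \oplus \cdots \oplus P_\ell$ as a graded $k$-vector space. To upgrade this to an $S$-module decomposition, I would verify that each $P_j$ is closed under multiplication by every $x_i$. If $v \in \zeta_j$ and $x_i \cdot x^v \neq 0$ in $K/I$, then $v + e_i \in \zeta$, and since $v$ and $v+e_i$ differ by $+e_i \in \{\pm e_1, \pm e_2, \pm e_3\}$ they lie in the same connected component; hence $v + e_i \in \zeta_j$, so $x_i \cdot x^v \in P_j$.

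For the second step, suppose $P_j$ admits a nontrivial $\bbN^3$-graded decomposition $P_j = A \oplus B$ as $S$-modules. Each graded piece of $P_j$ is either $0$ or one-dimensional (spanned by $x^v$ for some $v \in \zeta_j$), so each such $x^v$ must lie entirely in $A$ or entirely in $B$. This induces a partition $\zeta_j = \zeta_j^A \sqcup \zeta_j^B$. Closure of $A$ and $B$ under multiplication by each $x_i$ means: if $v \in \zeta_j^A$ and $v+e_i \in \zeta_j$, then $v+e_i \in \zeta_j^A$, and similarly for $B$. I would then observe that this also forces closure under $-e_i$ within $\zeta_j$: if $v \in \zeta_j^A$ and $v - e_i \in \zeta_j$, then $v-e_i$ must lie in $\zeta_j^A$, for otherwise $v - e_i \in \zeta_j^B$ would force $v = (v-e_i)+e_i \in \zeta_j^B$, contradicting the disjointness of the partition. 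Thus $\zeta_j^A$ is closed under $\pm e_i$ within $\zeta_j$, so it is a union of connected components of $\zeta_j$. Since $\zeta_j$ is connected, one of $\zeta_j^A$, $\zeta_j^B$ is empty, so one of $A$, $B$ is zero.

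The main obstacle, if any, is simply being careful with the interplay between the $\bbN^3$-grading and the $S$-module structure in the indecomposability argument: the one-dimensionality of graded pieces is what allows the partition step, and the symmetry argument for closure under $-e_i$ (which uses the partition property directly) is the key observation that exploits connectedness in both directions even though the $S$-action only moves $+e_i$. No separate appeal to uniqueness of the indecomposable decomposition is needed, since we are producing the decomposition explicitly.
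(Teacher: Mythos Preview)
Your proposal is correct and is essentially the same argument as the paper's. The paper also uses one-dimensionality of graded pieces to assign each monomial to one summand and then walks along a path in $\zeta_j$, using the $S$-action for the $+e_i$ steps and the disjointness of the decomposition for the $-e_i$ steps; your partition formulation of the same idea is just a repackaging.
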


\begin{proof}
    It is clear that $K / I \cong P_1 \oplus \cdots \oplus P_\ell$ as $\bbN^3$-graded $S$-modules. 
    Hence, it suffices to prove that each $P_j$ is indecomposable an $\bbN^3$-graded $S$-module.

    Suppose $P_j = M \oplus M'$ is a decomposition. Let $a\in\bbN^3$ be such that $(P_j)_a\neq0$. Then, $\dim_k(P_j)_a=1$, and so either $M_a=(P_j)_a$ or $M'_a=(P_j)_a$. Without loss of generality, suppose $M_a=(P_j)_a$. We claim that $M=P_j$. To see this, let $v\in\zeta_j$ be the monomial corresponding to $a$, and let $w \in \zeta_j$ be any other monomial. By connectedness, we can take a path $v = v_0, \cdots, v_m = w$ in $\zeta_j$ from $v$ to $w$. Assume inductively that $v_{i - 1} \in M$ for some $i \in [m]$. Then, either $v_i = x_kv_{i - 1}$ in which case $v_i \in M$, or $x_kv_i = v_{i - 1}$ in which case if $v_i \in M'$, then $v_{i - 1} \in M'$, a contradiction; thus, $v_i \in M$.
    %
    %
%
\end{proof}

\begin{proof}[Proof of \Cref{BIGMAIN}]
    Observe that without loss of generality we may assume $N$ is scaffolded. Let $(\nu, b, c)$ be the floor plan for which $N$ is a realization. By \Cref{LEM_indecdecomp}, we may write $K / I = P_1 \oplus \cdots \oplus P_\ell$, where for each $j \in [\ell]$, $P_j$ is the span in $S / I$ of the monomials in $\zeta_j = \nu_j + \bm{b}_j$, where $\bm{b}_j = (b_j, h_b(j))$ by definition of the canonical realization. It is clear that $P_j \subseteq (x_3)$ if and only if $h_b(j) > 0$, so by assumption $(\nu, b, c)$ is a right-free floor plan, and we are done.
\end{proof}

\bibliographystyle{alpha}
\bibliography{refs}
\end{document}